\newtheorem{theorem}{Theorem}
\newtheorem{conj}[theorem]{Conjecture}
\newtheorem{lemma}[theorem]{Lemma}
\theoremstyle{definition}
\theoremstyle{remark}
\newtheorem{rem}{Remark}
\numberwithin{equation}{section}
\numberwithin{theorem}{section}
\numberwithin{defn}{section}
\newcommand{\padedvphantom}[3]{%
	\vtop{%
		\vbox{%
			\vspace*{#2}%
			\hbox{\vphantom{#1}}%
		}%
		\vspace*{#3}%
	}%
}
\begin{document}
\title[Zagier's rank two examples for Nahm's problem]
 {Identities on Zagier's rank two examples for Nahm's problem}

\author{Liuquan Wang}
\address{School of Mathematics and Statistics, Wuhan University, Wuhan 430072, Hubei, People's Republic of China}
\email{wanglq@whu.edu.cn;mathlqwang@163.com}

\subjclass[2010]{11P84, 33D15, 33D60, 11F03}

\keywords{Nahm's problem; Rogers--Ramanujan type identities; sum-product identities; integral method; Slater's list}


\begin{abstract}
Let $r\geq 1$ be a positive integer, $A$ a real positive definite symmetric $r\times r$ matrix, $B$ a vector of length $r$, and $C$ a scalar. Nahm's problem is to describe all such $A,B$ and $C$ with rational entries for which a specific $r$-fold $q$-hypergeometric series (denoted by $f_{A,B,C}(q)$) involving the parameters $A,B,C$ is modular. When the rank $r=2$, Zagier provided eleven sets of examples of $(A,B,C)$ for which $f_{A,B,C}(q)$ is likely to be modular. We present a number of Rogers--Ramanujan type identities involving double sums, which give modular representations for Zagier's rank two examples. Together with several known cases in the literature, we verified ten of Zagier's examples and give conjectural identities for the remaining example.
\end{abstract}

\maketitle

\section{Introduction}\label{sec-intro}
The famous Rogers--Ramanujan identities, first discovered by Rogers and later rediscovered by Ramanujan, assert that
\begin{align}
\sum_{n=0}^\infty \frac{q^{n^2}}{(q;q)_n}&=\frac{1}{(q,q^4;q^5)_\infty}, \label{RR-1} \\
\sum_{n=0}^\infty \frac{q^{n(n+1)}}{(q;q)_n}&=\frac{1}{(q^2,q^3;q^5)_\infty}. \label{RR-2}
\end{align}
Here and throughout this paper, we assume that $|q|<1$ for convergence and use the standard $q$-series notation
\begin{align}
(a;q)_0:=1, \quad (a;q)_n:=\prod\limits_{k=0}^{n-1}(1-aq^k), \quad (a;q)_\infty :=\prod\limits_{k=0}^\infty (1-aq^k),  \\
(a_1,\cdots,a_m;q)_n:=(a_1;q)_n\cdots (a_m;q)_n, \quad n\in \mathbb{N}\cup \{\infty\}.
\end{align}


Since the appearance of the identities \eqref{RR-1} and \eqref{RR-2}, numerous works have been done to find similar identities, which were usually called as Rogers--Ramanujan type identities. One of the famous works on this topic is Slater's list \cite{Slater}, which contains 130 of such identities such as \cite[Eqs.\ (59),(60),(61)]{Slater}
\begin{align}
\sum_{n=0}^\infty \frac{q^{n^2+2n}(-q;q)_n}{(q;q)_{2n+1}}&=\frac{(q^2,q^{12},q^{14};q^{14})_\infty}{(q;q)_\infty}, \quad \text{(S.\ 59)} \label{Slater59} \\
\sum_{n=0}^\infty \frac{q^{n^2+n}(-q;q)_n}{(q;q)_{2n+1}}&=\frac{(q^4,q^{10},q^{14};q^{14})_\infty}{(q;q)_\infty}, \quad \text{(S.\ 60)} \label{Slater60} \\
\sum_{n=0}^\infty \frac{q^{n^2}(-q;q)_n}{(q;q)_{2n}}&=\frac{(q^6,q^8,q^{14};q^{14})_\infty}{(q;q)_\infty}. \quad \text{(S.\ 61)} \label{Slater61}
\end{align}
Here we use the label S.\ $n$ to denote the equation ($n$) in Slater's list \cite{Slater}. A detailed introduction to Rogers--Ramanujan type identities can be found in Sills' book \cite{Sills-book}.

If we look at \eqref{RR-1} and \eqref{RR-2} from the perspective of modular forms, after multiplying some rational powers of $q$, the product sides of them are modular functions. This fact is not clearly seen from the sum sides. It is thus natural to ask when does a basic hypergeometric series become a modular form. This is an important problem in both the theory of $q$-series and modular forms, and it has not been well understood yet. In particular, let $r\geq 1$ be a positive integer, $A$ a real positive definite symmetric $r\times r$ matrix, $B$ a vector of length $r$, and $C$ a scalar. Let us restrict our attention to the series
$$f_{A,B,C}(q):=\sum_{n=(n_1,\dots,n_r)^\mathrm{T} \in (\mathbb{Z}_{\geq 0})^r} \frac{q^{\frac{1}{2}n^\mathrm{T} An+n^\mathrm{T} B+C}}{(q;q)_{n_1}\cdots (q;q)_{n_r}}.$$
Nahm \cite{Nahm1994,Nahmconf,Nahm2007} posed the following problem: describe all such $A,B$ and $C$ with rational entries for which $f_{A,B,C}(q)$ is a modular function. For convenience, we shall call such $(A,B,C)$ as a modular triple, and call $A$ the matrix part, $B$ the vector part and $C$ the scalar part of it.

The Rogers--Ramanujan identities provided two examples $(A,B,C)=(2,0,-1/60)$ and $(2,1,11/60)$ for Nahm's problem.  Nahm's conjecture as stated in \cite{Zagier} provides a criterion on the matrix $A$ such that there exist $B$ and $C$ with $f_{A,B,C}(q)$ being modular. The conjecture is formulated in terms of Bloch group and a system of polynomial equations induced by $A$. See \cite[p.\ 43]{Zagier} for detailed statement. The motivation of Nahm's  problem comes from physics and the modular functions $f_{A,B,C}(q)$ are expected to be characters of rational conformal field theories.

Zagier \cite{Zagier} studied Nahm's problem and found many possible modular triples. In particular, when the rank $r=1$, Zagier confirmed Nahm's conjecture and proved that there are exactly seven modular triples $(A,B,C)$:
\begin{align}
&\left(1/2,0,-1/40\right), \quad \left(1/2, 1/2, 1/40\right), \quad \left(1,0,-1/48\right), \quad \left(1,1/2,1/24\right), \nonumber \\
&\left(1,-1/2,1/24\right), \quad \left(2,0,-1/60\right), \quad \left(2,1,11/60\right).
\end{align}
When the rank $r\geq 2$, Nahm's problem becomes more difficult. For $r=2$, extensive computer searches have been carried out
by Terhoeven \cite{Terhoeven} and Zagier \cite{Zagier}. After searching over $A=\frac{1}{m}\left(\begin{smallmatrix} a & b \\ b & c\end{smallmatrix}\right)$ with integers $a,b,c,m\leq 100$ which satisfy certain requirements, Zagier found eleven sets of possible modular triples and record them as \cite[Table 2]{Zagier}. To be specific, for $A$ being
\begin{align*}
\begin{pmatrix} \alpha & 1-\alpha \\ 1-\alpha & \alpha \end{pmatrix}, \quad \begin{pmatrix} 2 & 1 \\ 1 & 1 \end{pmatrix}, \quad
\begin{pmatrix} 4 & 1 \\ 1 & 1 \end{pmatrix},  \quad \begin{pmatrix} 4 & 2 \\ 2 & 2 \end{pmatrix}, \quad \begin{pmatrix} 2 & 1 \\ 1 & 3/2 \end{pmatrix}, \quad \begin{pmatrix}  4/3 & 2/3 \\ 2/3 & 4/3 \end{pmatrix}
\end{align*}
and their inverses (here $\alpha\in \mathbb{Q}$ and $\alpha>\frac{1}{2}$), Zagier found several values of $B$ and $C$ for which the function $f_{A,B,C}(q)$ is (or appears to be) modular.  Zagier stated explicit identities which reveal the modularity of $f_{A,B,C}(q)$ only in the case $A=\left(\begin{smallmatrix} \alpha & 1-\alpha \\  1-\alpha & \alpha \end{smallmatrix}\right)$ and $A=\left(\begin{smallmatrix} 4 & 1 \\ 1 & 1 \end{smallmatrix}\right)$. Namely, he proved that \cite[Eq.\ (26)]{Zagier}
\begin{align}\label{Zagier-infinite-case}
f_{\left(\begin{smallmatrix}  \alpha & 1-\alpha \\ 1-\alpha & \alpha\end{smallmatrix}\right), \left(\begin{smallmatrix} \alpha \nu \\ -\alpha\nu \end{smallmatrix}\right), \frac{\alpha}{2}\nu^2-\frac{1}{24}}(q)=\frac{1}{(q;q)_\infty}\sum_{n\in \mathbb{Z}+\nu}q^{\alpha n^2/2-1/24} \quad (\forall \nu \in \mathbb{Q}).
\end{align}
Let $\lfloor x\rfloor $ denote the integer part of $x$. Zagier also stated that \cite[p.\ 45]{Zagier}
\begin{align}
f_{\left(\begin{smallmatrix} 4 & 1 \\ 1 & 1 \end{smallmatrix}\right), \left(\begin{smallmatrix} 0 \\ 1/2\end{smallmatrix}\right), \frac{1}{120}}(q)=\frac{1}{(q;q)_\infty}\sum_{n\equiv 1 \pmod{10}} (-1)^{\lfloor n/10\rfloor}q^{n^2/20-1/24},  \label{Zagier-exam4-1}\\
f_{\left(\begin{smallmatrix} 4 & 1 \\ 1 & 1 \end{smallmatrix}\right), \left(\begin{smallmatrix}2 \\ 1/2 \end{smallmatrix}\right),  \frac{49}{120}}(q)=\frac{1}{(q;q)_\infty}\sum_{n\equiv 3\pmod{10}} (-1)^{\lfloor n/10\rfloor }q^{n^2/20-1/24}, \label{Zagier-exam4-2}
\end{align}
and he remarked that ``these equations were not proved, but only verified to a higher order in the power series in $q$''.

Using an approach outlined by Zagier \cite{Zagier}, Vlasenko and Zwegers \cite{VZ} found all modular triples $(A,B,C)$ for $A$ being $\left(\begin{smallmatrix} a & \lambda-a \\ \lambda-a & a \end{smallmatrix}\right)$ with $a\in \mathbb{Q}$ and $\lambda \in \{\frac{1}{2},1,2\}$. These include several examples missed in Zagier's list \cite[Table 2]{Zagier}. For example, they find that for $A$ being
$\left(\begin{smallmatrix} 3/2 & 1/2 \\ 1/2 & 3/2 \end{smallmatrix}\right)$ or $\left(\begin{smallmatrix} 3/4 & -1/4 \\ -1/4 & 3/4  \end{smallmatrix}\right)$, there are $B$ and $C$ such that $f_{A,B,C}(q)$ is modular. Since these two matrices do not satisfy Nahm's criterion, they can be regarded as counterexamples to Nahm's conjecture as stated in \cite{Zagier}. Vlasenko and Zwegers \cite{VZ} also gave an implicit proof of the formulas \eqref{Zagier-exam4-1}--\eqref{Zagier-exam4-2} (see Remark \ref{rem-exam4}).

Motivated by the above works, the purpose of this paper is to provide a verification of Zagier's list in the rank two case. We will state explicit Rogers--Ramanujan type identities involving double sums for each of Zagier's example. The sum sides of our identities are essentially $f_{A,B,C}(q)$ with $(A,B,C)$ from Zagier's list, and the product sides show clearly that they are indeed modular functions.

For convenience, we label the examples in Zagier's list from 1 to 11 according to their order in \cite[Table 2]{Zagier}. We are able to prove nine of them. Namely, Examples 1--4, 6--9 and 11.  It should be mentioned that seven of Zagier's examples have been discussed (explicitly or implicitly) in the literature. See Table \ref{tab-known} for known cases and references which discuss them. Note that for Example 2, there are five choices for the vector part $B$. Lee's arguments  \cite{LeeThesis} and the work of Vlasenko and Zwegers \cite{VZ} are applicable for three of them. The remaining two cases can be justified by double sum variants of the G\"ollnitz--Gordon identities (see \eqref{Slater34} and \eqref{Slater36}). We will give unified proofs which apply to all these five cases. As for Example 10,  Vlasenko and Zwegers \cite{VZ} only provided conjectural identities, and the modularity of this example were later confirmed by Cherednik and Feigin \cite{Feigin} via the nilpotent double affine Hecke algebras. We will state new identities for Example 10 which is different from but equivalent to that of Vlasenko and Zwegers \cite{VZ}. The only example which remains open is Example 5. We will state conjectural identities for it.

\begin{table}[htbp]
\begin{tabular}{c|cl} \hline
  Exam. No. & Matrix $A$ & References    \\
  \hline
1 & $\left(\begin{smallmatrix}
\alpha & 1-\alpha \\ 1-\alpha &\alpha
\end{smallmatrix} \right)$  &proved by Zagier \cite{Zagier} (see  \eqref{Zagier-infinite-case})  \\
 2 & $\left(\begin{smallmatrix} 2 & 1 \\ 1 & 1 \end{smallmatrix}\right)$ & Lee \cite{LeeThesis}, Vlasenko--Zwegers \cite{VZ} \\
 &  & G\"{o}llnitz--Gordon identities  \\
 3 & $\left(\begin{smallmatrix} 1 & -1 \\ -1 & 2 \end{smallmatrix}\right)$  & proved by Calinescu--Milas--Penn \cite{CMP}  \\
 4 & $\left(\begin{smallmatrix} 4 & 1 \\ 1 & 1 \end{smallmatrix}\right)$ &  proved implicitly by Vlasenko--Zwegers \cite{VZ} \\
 5 & $\left(\begin{smallmatrix}   1/3 & -1/3 \\ -1/3 & 4/3 \end{smallmatrix} \right)$  & Section \ref{sec-exam5} \\
 6 & $\left(\begin{smallmatrix} 4 & 2 \\ 2 & 2 \end{smallmatrix}\right)$ & instances of the Andrews--Gordon identity \cite{LeeThesis}  \\
 7 & $\left(\begin{smallmatrix} 1/2 & -1/2 \\ -1/2 & 1 \end{smallmatrix}  \right)$ & Section \ref{sec-exam7} \\
 8 & $\left( \begin{smallmatrix} 3/2 & 1 \\ 1 & 2 \end{smallmatrix}  \right)$  & Section \ref{sec-exam8} \\
 9 & $\left( \begin{smallmatrix} 1 & -1/2 \\ -1/2 & 3/4 \end{smallmatrix}   \right)$  & Section \ref{sec-exam9} \\
 10 & $\left(\begin{smallmatrix} 4/3 & 2/3 \\ 2/3 & 4/3 \end{smallmatrix}\right)$ & conjectured by Vlasenko--Zwegers \cite{VZ}  \\
 11 & $\left(\begin{smallmatrix} 1 &-1/2\\ -1/2 & 1 \end{smallmatrix}\right)$  &  proved by Vlasenko--Zwegers \cite{VZ} \\
  \hline
\end{tabular}
\\[2mm]
\caption{Zagier's examples and references}
\label{tab-known}
\end{table}

As seen from Table \ref{tab-known}, it appears that this is the first time to state explicit identities for Examples 5, 7, 8 and 9. Moreover, it seems to be the first time to give proofs for Examples 7--9. It is worth mentioning that many of Zagier's examples can be reduced to some known single sum Rogers--Ramanujan type identities from Slater's list \cite{Slater}. The reduction processes vary for different examples. For most of the examples, we achieve it by summing over one of the indexes first. For Example 8 we will use an integral method to find new expressions for the sum sides, and then either eliminate one of the summation indexes or follow the techniques in the author's work \cite{Wang}.

The rest of this paper is organized as follows. In Section \ref{sec-pre} we first collect some useful identities which will play key roles in studying Zagier's examples, and then we introduce a strategy to justify the modularity of generalized Dedekind eta-products. In Section \ref{sec-examples} we discuss Zagier's examples one by one. Since details of proofs for some known examples were omitted in the literature, for the sake of completeness, we shall include complete proofs or brief discussions for all these examples.

\section{Preliminaries}\label{sec-pre}


\subsection{Auxiliary identities}
We first introduce some notations and identities  that will be used frequently.

To make our formulas more compact, sometimes we will use the symbols:
$$J_m:=(q^m;q^m)_\infty, \quad J_{a,m}:=(q^a,q^{m-a},q^m;q^m)_\infty.$$

We need Euler's $q$-exponential identities \cite[Corollary 2.2]{Andrews}
\begin{align}\label{Euler}
\sum_{n=0}^\infty \frac{z^n}{(q;q)_n}=\frac{1}{(z;q)_\infty}, \quad \sum_{n=0}^\infty \frac{q^{\binom{n}{2}} z^n}{(q;q)_n}=(-z;q)_\infty, \quad |z|<1,
\end{align}
and the Jacobi triple product identity \cite[Theorem 2.8]{Andrews}
\begin{align}\label{Jacobi}
(q,z,q/z;q)_\infty=\sum_{n=-\infty}^\infty (-1)^nq^{\binom{n}{2}}z^n.
\end{align}
We also recall Lebesgue's identity \cite[Corollary 2.7]{Andrews}:
\begin{align}\label{Lebesgue}
\sum_{n=0}^\infty \frac{q^{n(n+1)/2}(-zq;q)_n}{(q;q)_n}=(-zq^2;q^2)_\infty (-q;q)_\infty.
\end{align}

As mentioned in the introduction, many of Zagier's examples can be reduced to some identities in Slater's list. Besides \eqref{Slater59}--\eqref{Slater61}, we will also need:
\begin{align}
&\sum_{n=0}^\infty \frac{(-1)^nq^{3n^2}}{(-q;q^2)_n(q^4;q^4)_n} =\frac{(q^2,q^3,q^5;q^5)_\infty}{(q^2;q^2)_\infty}, \quad \text{(S.\ 19)}\label{Slater19} \\
&\sum_{n=0}^\infty \frac{q^{2n(n+1)}(q;q^2)_{n+1}}{(q^2;q^2)_{2n+1}}=\sum_{n=0}^\infty \frac{q^{2n(n+1)}}{(-q;q^2)_{n+1}(q^4;q^4)_n}=\frac{(q,q^6,q^7;q^7)_\infty}{(q^2;q^2)_\infty}, \quad \text{(S.\ 31)} \label{Slater31} \\
&\sum_{n=0}^\infty \frac{q^{2n^2+2n}(q;q^2)_n}{(q^2;q^2)_{2n}}=\sum_{n=0}^\infty \frac{q^{2n(n+1)}}{(q^2;q^2)_n(-q;q)_{2n}}=\frac{(q^2,q^5,q^7;q^7)_\infty}{(q;q)_\infty}, \quad \text{(S.\ 32)}\label{Slater32} \\
&\sum_{n=0}^\infty \frac{q^{2n^2}(q;q^2)_n}{(q^2;q^2)_{2n}}=\sum_{n=0}^\infty \frac{q^{2n^2}}{(q^2;q^2)_n(-q;q)_{2n}}=\frac{(q^3,q^4,q^7;q^7)_\infty}{(q^2;q^2)_\infty}, \quad \text{(S.\ 33)} \label{Slater33} \\
&\sum_{n=0}^\infty \frac{q^{n^2+2n}(-q;q^2)_n}{(q^2;q^2)_n} =\frac{1}{(q^3,q^4,q^5;q^8)_\infty}, \quad \text{(S.\ 34)} \label{Slater34}  \\
&\sum_{n=0}^\infty \frac{q^{n^2}(-q;q^2)_n}{(q^2;q^2)_n}=\frac{1}{(q,q^4,q^7;q^8)_\infty},  \quad \text{(S.\ 36)} \label{Slater36} \\
&\sum_{n=0}^\infty \frac{q^{2n^2}}{(q;q)_{2n}}=\frac{(-q^3,-q^5,q^8;q^8)_\infty}{(q^2;q^2)_\infty}, \quad   \text{(S.\ 39)}\label{Slater39} \\
&\sum_{n=0}^\infty \frac{q^{2n^2+2n}}{(q;q)_{2n+1}}=\frac{(-q,-q^7,q^8;q^8)_\infty}{(q^2;q^2)_\infty}, \quad \text{(S.\ 38)}\label{Slater38} \\
&\sum_{n=0}^\infty \frac{q^{\frac{3}{2}n^2+\frac{3}{2}n}}{(q;q)_n(q;q^2)_{n+1}} =\frac{(q^2,q^8,q^{10};q^{10})_\infty}{(q;q)_\infty}, \quad \text{(S.\ 44)} \label{Slater44} \\
&\sum_{n=0}^\infty \frac{q^{\frac{3}{2}n^2-\frac{1}{2}n}}{(q;q)_n(q;q^2)_n} =\frac{(q^4,q^6,q^{10};q^{10})_\infty}{(q;q)_\infty},   \quad \text{(S.\ 46)} \label{Slater46} \\
&\sum_{n=0}^\infty \frac{q^{(n^2+n)/2}(-q;q)_n}{(q;q)_{2n+1}}=\sum_{n=0}^\infty \frac{q^{(n^2+n)/2}}{(q;q)_n(q;q^2)_{n+1}}=\frac{J_2J_{14}^3}{J_1J_{1,14}J_{4,14}J_{6,14}},  \quad  \text{(S.\ 80)}   \label{Slater80} \\
&\sum_{n=0}^\infty \frac{q^{(n^2+n)/2}(-q;q)_n}{(q;q)_{2n}}
=\sum_{n=0}^\infty \frac{q^{(n^2+n)/2}}{(q;q)_n(q;q^2)_n}=\frac{J_2J_{14}^3}{J_1J_{2,14}J_{3,14}J_{4,14}}, \quad   \text{(S.\ 81)}  \label{Slater81} \\
&\sum_{n=0}^\infty \frac{q^{(n^2+3n)/2}(-q;q)_n}{(q;q)_{2n+1}}=\sum_{n=0}^\infty \frac{q^{(n^2+3n)/2}}{(q;q)_n(q;q^2)_{n+1}}=\frac{J_2J_{14}^3}{J_1J_{2,14}J_{5,14}J_{6,14}}, \quad  \text{(S.\ 82)}\label{Slater82} \\
&\sum_{n=0}^\infty \frac{q^{3n^2+2n}(-q;q^2)_{n+1}}{(q^2;q^2)_{2n+1}}=\frac{J_2J_{3,10}J_{4,20}}{J_1J_4J_{20}}, \quad \text{(S.\ 97)} \label{Slater97} \\
&\sum_{n=0}^\infty \frac{q^{n^2}(-q;q^2)_n}{(q^2;q^2)_{2n}}=\sum_{n=0}^\infty \frac{q^{n^2}}{(q;q^2)_n(q^4;q^4)_n}=\frac{J_2J_{14}J_{3,28}J_{11,28}}{J_1J_{28}J_{4,28}J_{12,28}}, \quad  \text{(S.\ 117)} \label{Slater117} \\
&\sum_{n=0}^\infty \frac{q^{n^2+2n}(-q;q^2)_n}{(q^2;q^2)_{2n}}=\sum_{n=0}^\infty \frac{q^{n^2+2n}}{(q;q^2)_n(q^4;q^4)_n}=\frac{J_2J_{1,14}J_{12,28}}{J_1J_4J_{28}}, \quad \text{(S.\ 118)} \label{Slater118} \\
&\sum_{n=0}^\infty \frac{q^{n^2+2n}(-q;q^2)_{n+1}}{(q^2;q^2)_{2n+1}}=\sum_{n=0}^\infty \frac{q^{n^2+2n}}{(q;q^2)_{n+1}(q^4;q^4)_n}=\frac{J_2J_{4,28}J_{5,14}}{J_1J_4J_{28}}. \quad  \text{(S.\ 119)} \label{Slater119}
\end{align}
The identities \eqref{Slater31}--\eqref{Slater33} are usually referred as the Rogers--Selberg mod 7 identities.
Note that \eqref{Slater34} also appeared as \cite[Entry 1.7.12]{Lost2}, \eqref{Slater36} also appeared in \cite[Entries 1.7.11 and 4.2.15]{Lost2}. A typo has been corrected for \eqref{Slater97}. The identities \eqref{Slater34} and \eqref{Slater36} are usually referred as the G\"{o}llnitz--Gordon identities.

For Example 8 we will provide two different proofs. For both proofs we will use an integral method to rewrite the sum sides as integrals of some infinite products. This method was applied by Rosengren \cite{Rosengren} to prove some conjectural identities of Kanade and Russell \cite{KR-2019}. Later it has been applied in several works. For example, it was used by Chern \cite{Chern} and the author \cite{Wang} to prove a conjecture of Andrews and Uncu \cite{Andrews-Uncu}. It was also utilized by Mc Laughlin \cite{Laughlin} and Cao and Wang \cite{Cao-Wang} in finding some new multi-sum Rogers--Ramanujan type identities.

In particular, for the second proof of Example 8, we rely on the following result found from the book of Gasper and Rahman \cite{GR-book}, which plays a key role in the author's work \cite{Wang}. Before stating it, we remark that the symbol ``idem $(c_1;c_2,\dots,c_C)$'' after an expression stands for the sum of the $(C-1)$ expressions obtained from the preceding expression by interchanging $c_1$ with each $c_k$, $k=2,3,\dots,C$.

\begin{lemma}\label{lem-integral}
(Cf.\ \cite[Eq.\ (4.10.6)]{GR-book})
Suppose that
$$P(z):=\frac{(a_1z,\dots,a_Az,b_1/z,\dots,b_B/z;q)_\infty}{(c_1z,\dots,c_Cz,d_1/z,\dots,d_D/z;q)_\infty}$$
has only simple poles. We have
\begin{align}\label{eq-integral}
\oint P(z)\frac{\mathrm{d}z}{2\pi iz}=& \frac{(b_1c_1,\dots,b_Bc_1,a_1/c_1,\dots,a_A/c_1;q)_\infty }{(q,d_1c_1,\dots,d_Dc_1,c_2/c_1,\dots,c_C/c_1;q)_\infty} \nonumber \\
& \times \sum_{n=0}^\infty \frac{(d_1c_1,\dots,d_Dc_1,qc_1/a_1,\dots,qc_1/a_A;q)_n}{(q,b_1c_1,\dots,b_Bc_1,qc_1/c_2,\dots,qc_1/c_C;q)_n} \nonumber \\
&\times \Big(-c_1q^{(n+1)/2}\Big)^{n(C-A)}\Big(\frac{a_1\cdots a_A}{c_1\cdots c_C} \Big)^n +\text{idem} ~(c_1;c_2,\dots,c_C)
\end{align}
when $C>A$, or if $C=A$ and
\begin{align}\label{cond}
\left|\frac{a_1\cdots a_A }{c_1\cdots c_C}\right|<1.
\end{align}
Here the integration is over a positively oriented contour so that the poles of
$$(c_1z,\dots,c_Cz;q)_\infty^{-1}$$
lie outside the contour, and the origin and poles of $(d_1/z,\dots,d_D/z;q)_\infty^{-1}$ lie inside the contour.
\end{lemma}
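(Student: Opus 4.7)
The plan is to prove Lemma \ref{lem-integral} by residue calculus. Write $F(z):=P(z)/z$. Under the contour conventions stated, the only singularities of $F$ inside the contour are the origin and the poles of $(d_j/z;q)_\infty^{-1}$ at $z=d_jq^n$; outside the contour the only singularities are the simple poles of $(c_kz;q)_\infty^{-1}$ at $z_{k,n}:=q^{-n}/c_k$ for $k=1,\dots,C$ and $n\geq 0$. The first step is to deform the contour outward to a circle of radius $R\to\infty$ chosen to avoid all poles, and show that the integral over that circle tends to zero. For each factor $(xz;q)_\infty$ one has $\log\bigl|(xz;q)_\infty\bigr|=\tfrac{1}{2}(\log|z|)^2/|\log q|+O(\log|z|)$ as $|z|\to\infty$ through pole-avoiding radii, and an analogous estimate for $(y/z;q)_\infty$. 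When $C>A$ the excess of denominator factors produces super-exponential decay; when $C=A$ the leading quadratic-in-$\log|z|$ terms cancel exactly, the remaining linear contribution yields a power $z^{A-C}=z^0$ multiplied by $\bigl(a_1\cdots a_A/(c_1\cdots c_C)\bigr)^{N(R)}$ on annular pieces of winding parameter $N(R)\to\infty$, and the hypothesis \eqref{cond} delivers the decay. By the residue theorem the original integral therefore equals the negative of the sum of residues of $F$ at the external poles $z_{k,n}$.

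The second step is to compute one such residue. At $z=z_{k,n}$ only the factor $1-c_kzq^n$ in $(c_kz;q)_\infty$ vanishes; a routine computation gives $\mathrm{Res}_{z=z_{k,n}}1/(c_kz;q)_\infty=(-1)^{n+1}q^{\binom{n}{2}}/\bigl(c_k(q;q)_n(q;q)_\infty\bigr)$. Substituting $z=q^{-n}/c_k$ into every remaining factor and repeatedly applying the reversals
\[(xq^{-n};q)_\infty=(x;q)_\infty\,(-x/q)^nq^{-\binom{n+1}{2}}(q/x;q)_n,\qquad (yq^n;q)_\infty=(y;q)_\infty/(y;q)_n\]
rewrites each $(a_jz;q)_\infty$, $(b_j/z;q)_\infty$, $(c_\ell z;q)_\infty^{-1}$ with $\ell\ne k$, and $(d_j/z;q)_\infty^{-1}$ as a product of $q$-shifted factorials evaluated at the base point $1/c_k$ times a Pochhammer symbol in $n$. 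Collecting everything reproduces exactly a summand of the $c_k$-term on the right of \eqref{eq-integral}; summing over $n\geq 0$ and then symmetrizing over $k=1,\dots,C$ produces the full right-hand side with the indicated idem-combination.

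The principal obstacle is the bookkeeping in the second step: the reversal identities each introduce signed powers of $q$ and of the parameters, and showing that they collapse into the compact factor $\bigl(-c_1q^{(n+1)/2}\bigr)^{n(C-A)}\bigl(a_1\cdots a_A/(c_1\cdots c_C)\bigr)^n$ requires tracking $A+B+C+D+1$ separate contributions to the overall $q$-exponent and verifying that the signs cancel to the right parity. I would audit the final shape on the benchmark cases that reduce to the $q$-beta integral $(A=B=0,\,C=D=1)$ and to Ramanujan's ${}_1\psi_1$ summation $(A=B=C=D=1)$. Since Lemma \ref{lem-integral} is quoted verbatim from \cite[Eq.\ (4.10.6)]{GR-book}, the route above merely reproduces the derivation in the reference, and for the applications in the remainder of the paper the lemma may be invoked as stated.
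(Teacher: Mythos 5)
The paper does not actually prove this lemma; it is quoted directly from Gasper and Rahman \cite{GR-book}, Eq.\ (4.10.6), so there is no internal proof to compare against, and your residue-calculus sketch is precisely the standard derivation given in that reference: expand the contour through pole-avoiding radii, show the large-circle integral vanishes (automatically when $C>A$, and via \eqref{cond} when $C=A$), and sum the residues at the external poles $z=q^{-n}/c_k$. Your outline is sound, and your residue of $(c_kz;q)_\infty^{-1}$ is correct. One bookkeeping slip to fix when you audit the exponents: the reversal identity should read $(xq^{-n};q)_\infty=(x;q)_\infty\,(-x/q)^nq^{-\binom{n}{2}}(q/x;q)_n$, equivalently $(x;q)_\infty\,(-x)^nq^{-\binom{n+1}{2}}(q/x;q)_n$; as written you have combined the prefactor of one version with the exponent of the other, which introduces a spurious $q^{-n}$ per factor, though this does not affect the structure of the argument.
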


\subsection{Basic facts about modular forms}\label{sec-modularity}
We will need some basic facts about modular forms, which can be found in the work of Frye and Garvan \cite{Garvan-Liang} and the references therein. We always assume that $q=e^{2\pi i\tau}$ where  $\tau \in \mathbb{H}:=\{\tau\in \mathbb{C}: \mathrm{Im} \tau>0\}$. The Dedekind eta function is defined by
\begin{align}\label{eq-eta-defn}
\eta(\tau):=q^{1/24}\prod\limits_{n=1}^\infty (1-q^n).
\end{align}
The generalized Dedekind eta function is defined to be
\begin{align}\label{eq-general-eta}
\eta_{\delta;g}(\tau):=q^{\frac{\delta}{2}P_2(g/\delta)}\prod\limits_{m\equiv \pm g \pmod{\delta}} (1-q^m),
\end{align}
where $P_2(t)=\{t\}^2-\{t\}+\frac{1}{6}$ is the second periodic Bernoulli polynomial, $\{t\}=t-\lfloor t\rfloor $ is the fractional part of $t$, $g,\delta,m\in \mathbb{Z}^+$ and $0<g<\delta$. We note the following obvious but useful fact: for any positive integer $k$, we have
\begin{align}\label{eta-scaling}
\eta_{k\delta;kg}(\tau)=\eta_{\delta;g}(k\tau).
\end{align}

Recall the full modular group
\begin{align*}
\mathrm{SL}_2(\mathbb{Z}):=\left\{\begin{pmatrix} a & b \\ c & d \end{pmatrix}: ad-bc=1,a,b,c,d\in \mathbb{Z}\right\}
\end{align*}
and one of its congruence subgroup
\begin{align*}
\Gamma_1(N):=\left\{\gamma \in \mathrm{SL}_2(\mathbb{Z}):\gamma \equiv \begin{pmatrix} 1 & * \\ 0 & 1 \end{pmatrix} \pmod{N} \right\}.
\end{align*}
As is well known, the function $\eta_{\delta;g}(\tau)$ is a modular function on some congruence subgroup of $\mathrm{SL}_2(\mathbb{Z})$ with multiplier system. Let $N$ be a fixed positive integer. A generalized Dedekind eta-product of level $N$ has the form
\begin{align}\label{defn-eq-general-eta}
f(\tau)=\prod\limits_{\begin{smallmatrix} \delta|N \\ 0<g<\delta \end{smallmatrix}} \eta_{\delta;g}^{r_{\delta,g}}(\tau)
\end{align}
where
\begin{align}
r_{\delta,g}\in \left\{\begin{array}{ll}
\frac{1}{2}\mathbb{Z} & \text{if $g=\delta/2$,}\\
\mathbb{Z} & \text{otherwise.}
\end{array}\right.
\end{align}
Recall the following result of Robins \cite{Robins}.
\begin{lemma}\label{lem-Robins}
The function $f(\tau)$ defined in \eqref{defn-eq-general-eta} is a modular function on $\Gamma_1(N)$ if
\begin{enumerate}
\item $\sum\limits_{\begin{smallmatrix} \delta|N \\ g\end{smallmatrix}} \delta P_2(\frac{g}{\delta})r_{\delta,g}\equiv 0 \pmod{2}$ and
\item  $\sum\limits_{\begin{smallmatrix} \delta|N \\ g \end{smallmatrix}} \frac{N}{\delta} P_2(0)r_{\delta,g}\equiv 0 \pmod{2}$.
\end{enumerate}
\end{lemma}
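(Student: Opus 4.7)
The plan is to verify that $f(\tau)$ is invariant with trivial multiplier under every $\gamma\in\Gamma_1(N)$, and then to check meromorphy at each cusp. The starting point is the classical transformation law of the generalized Dedekind eta function, due to Schoeneberg and made fully explicit in the work of Robins and Köhler: for $\gamma=\begin{pmatrix}a&b\\c&d\end{pmatrix}\in\mathrm{SL}_2(\mathbb{Z})$, one has an identity of the shape $\eta_{\delta;g}(\gamma\tau)=v(\gamma,\delta,g)\,\eta_{\delta;g^{*}}(\tau)$, where $g^{*}\equiv \pm ag\pmod\delta$ and $v(\gamma,\delta,g)$ is a root of unity whose phase decomposes naturally into three pieces: a piece linear in $b\cdot \delta P_2(g/\delta)$ (coming from the $q$-power in front of the product in \eqref{eq-general-eta}), a piece linear in $(c/N)\cdot (N/\delta) P_2(0)$ (coming from the cusp-$0$ behavior and an infinite-product-to-$\eta$ conversion), and Dedekind-sum-type cross terms depending on $(a,c,\delta,g)$.

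Next, I would specialize $\gamma$ to $\Gamma_1(N)$: here $a\equiv d\equiv 1$ and $c\equiv 0\pmod N$, and since every $\delta$ appearing in the product divides $N$, one has $ag\equiv g\pmod\delta$, so $g^{*}=g$ and each factor $\eta_{\delta;g}$ returns to itself up to a scalar. Raising to $r_{\delta,g}$ and multiplying, the total multiplier is a single root of unity $\mu(\gamma)=\exp\!\bigl(\pi i\bigl[b\,S_1+(c/N)\,S_2+R(\gamma)\bigr]\bigr)$, where $S_1$ and $S_2$ are exactly the two sums appearing in hypotheses (1) and (2), and $R(\gamma)\in 2\mathbb{Z}$ is the aggregate contribution of the Dedekind-sum cross terms once summed against the exponents $r_{\delta,g}$. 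Granting the integrality of $R(\gamma)$, running $\gamma$ through a generating set of $\Gamma_1(N)$ — most simply the translation $T=\begin{pmatrix}1&1\\0&1\end{pmatrix}$, which forces $S_1\equiv 0\pmod 2$, and matrices with $c=N$, which force $S_2\equiv 0\pmod 2$ — shows that conditions (1) and (2) are exactly what is needed to enforce $\mu(\gamma)=1$ for every $\gamma\in\Gamma_1(N)$.

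Finally, holomorphy of $f(\tau)$ on $\mathbb{H}$ is automatic, since each $\eta_{\delta;g}$ is holomorphic and nowhere-vanishing on the upper half-plane. Meromorphy at the cusps of $\Gamma_1(N)$ follows from the product definition: at $i\infty$ the $q$-order of $f(\tau)$ equals $\sum_{\delta,g}\frac{\delta}{2}P_2(g/\delta)r_{\delta,g}$, a finite rational number, and the order at any other cusp is computed by conjugating the cusp to $i\infty$ via an $\mathrm{SL}_2(\mathbb{Z})$-element and re-applying the transformation formula of step one. The main obstacle is step two: the clean isolation of $\mu(\gamma)$ into an $S_1$-part, an $S_2$-part, and an \emph{even} integer remainder. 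This reduces to a careful Dedekind-sum bookkeeping that exploits the reciprocity law for Dedekind sums and the symmetry of the indexing set $\{(\delta,g):\delta\mid N,\,0<g<\delta\}$, and it is precisely this computation that is carried out in Robins' original paper \cite{Robins}.
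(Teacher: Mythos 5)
The paper does not prove this lemma at all: it is quoted verbatim as a known result of Robins, with the proof residing entirely in \cite{Robins}. So there is no internal argument to compare yours against, and the relevant question is whether your sketch would stand on its own. Its skeleton is correct and is indeed the route Robins takes: condition (1) is exactly triviality of the multiplier under $T=\left(\begin{smallmatrix}1&1\\0&1\end{smallmatrix}\right)$, since the prefactor $q^{\frac{\delta}{2}P_2(g/\delta)}$ contributes $e^{\pi i\,\delta P_2(g/\delta)}$ per factor while the infinite product is $T$-invariant; condition (2) plays the analogous role at the cusp $0$; and for weight $0$ the multiplier is a genuine character, so checking it on the full transformation formula (or on generators) suffices.

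The genuine gap, if your text is read as a proof rather than as an annotated citation, is the sentence ``Granting the integrality of $R(\gamma)$\dots''. The assertion that the Dedekind-sum cross terms, summed against the exponents $r_{\delta,g}$ (which may be half-integers when $g=\delta/2$), always aggregate to an element of $2\mathbb{Z}$ for every $\gamma\in\Gamma_1(N)$ is the entire nontrivial content of the lemma: it is what makes the two stated congruences \emph{sufficient} rather than merely necessary. You assert it and defer it to Robins' paper, which means your argument ultimately rests on the same external reference the paper itself invokes. A second, smaller imprecision: your generator discussion ($T$ forces (1), matrices with $c=N$ force (2)) establishes necessity of the conditions, which the lemma does not claim; sufficiency requires the multiplier formula for arbitrary $\gamma\in\Gamma_1(N)$, which again hinges on the unproven evenness of $R(\gamma)$. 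The cusp-meromorphy discussion is fine and is genuinely routine.
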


Given a generalized eta-product $f(\tau)$ as in \eqref{defn-eq-general-eta}, suppose that
\begin{align}
\sum\limits_{\begin{smallmatrix} \delta|N \\ g\end{smallmatrix}} \delta P_2(\frac{g}{\delta})r_{\delta,g}=\frac{m_1}{n_1}, \quad
\sum\limits_{\begin{smallmatrix} \delta|N \\ g \end{smallmatrix}} \frac{N}{\delta} P_2(0)r_{\delta,g}=\frac{m_2}{n_2}
\end{align}
where $m_i,n_i\in \mathbb{Z}$, $n_i>0$ and $\gcd(m_i,n_i)=1$ for $i=1,2$. We can find the least positive integers $k$ and $N_0$ such that both $km_1/n_1$ and $N_0m_2/n_2$ are even integers. Then in view of \eqref{eta-scaling} and Lemma \ref{lem-Robins}, we know that $f(k\tau)$ is a modular function on $\Gamma_1(kN_0N)$.

Next, suppose that $f_i(\tau)$ is a generalized eta-product ($i=1,2,\dots,s$). It is clear that
$$F(\tau):=f_1(\tau)+f_2(\tau)+\cdots+f_s(\tau)$$
is a modular function. To make this more precise, let $k_i$ be a positive integer such that $f_i(k_i\tau)$ is a modular function on $\Gamma_1(N_i)$ ($i=1,2,\dots,s$). Let $k$ (resp.\ $N$) be the least common multiple of $k_1,k_2,\dots,k_s$ (resp.\ $N_1,N_2,\dots,N_s$), then it is easy to see that $F(k\tau)$ is a modular function on $\Gamma_1(kN)$.

In our situation, we will convert a Nahm sum $f_{A,B,0}(q)$ to sum of infinite products:
$$f_{A,B,0}(q)=\lambda_1q^{c_1}g_1(q)+\lambda_2 q^{c_2}g_2(q)+\cdots +\lambda_s q^{c_s}g_s(q)$$
where for $i=1,2,\dots,s$, $\lambda_i\in \mathbb{C}$ and each $g_i(q)$ is a product consists of the functions $J_m$ and $J_{a,m}$. If there exists a common rational number $C$ such that each $q^{C+c_i}g_i(q)$ ($i=1,2,\dots,s$) becomes a generalized eta-product, then $f_{A,B,C}(q)=q^Cf_{A,B,0}(q)$ is a modular function.
The value $C$ could be easily computed using the definition \eqref{defn-eq-general-eta} with the help of the Maple package \texttt{thetaids} developed by Frye and Garvan \cite{Garvan-Liang}. This package can be used to justify the modularity of generalized eta-products and prove identities among them. Here we only give a brief explanation of its application to justify modularity.

The generalized eta-product
$$\eta_{N;g_1}^{r_1}(\tau)\eta_{N;g_2}^{r_2}(\tau)\cdots \eta_{N;g_m}^{r_m}(\tau)$$
is symbolically represented by the so-called geta-list
$$[[N, g_1,r_1],[N, g_2,r_2] \ldots, [N, g_m,r_m]].$$
The command \texttt{GETAP2getalist} converts a product of generalized eta-functions into a list as described above.

The functions on the left side of (1) and (2) in Lemma \ref{lem-Robins} are computed using the Maple functions \texttt{vinf} and \texttt{v0} respectively. Based on Robins' result, the command \texttt{Gamma1ModFunc(L,N)} checks whether a given generalized eta-product is a modular function on $\Gamma_1(N)$, where the generalized et-product is encoded as the geta-list $L$. It returns $1$ if it is a modular function on $\Gamma_1(N)$ otherwise it returns $0$.  If the global variable \texttt{xprint} is set to true then the verification process is printed. We will demonstrate this process briefly in Example 7.

\section{Explicit identities for rank two examples}\label{sec-examples}

Now we discuss Zagier's examples one by one. The modular triples for Examples 1-11 \cite[Table 2]{Zagier} are recorded as Tables \ref{tab-1}--\ref{tab-11}, respectively. We will also determine a positive integer $k$ and the level $N$ such that $f_{A,B,C}(q^k)$ is a modular function on $\Gamma_1(N)$. We stress here that the symbols $F(u,v;q)$, $F_i(q), G_i(q), H_i(q), R_i(q), S_i(q)$ and  $T_i(q)$ used below may have different meanings in each proof. Their precise meanings will be given in the context.

\subsection{Example 1}\label{sec-exam1}
The modular triple $(A,B,C)$ for this example is given in Table \ref{tab-1}.
\begin{table}[H]
\centering
\begin{tabular}{c|c}
  \hline
  \padedvphantom{I}{3ex}{3ex}
  $A$ & $\begin{pmatrix}
\alpha & 1-\alpha \\ 1-\alpha &\alpha
\end{pmatrix}$  \\
\hline
\padedvphantom{I}{3ex}{3ex}
  $B$ & $\begin{pmatrix} \alpha \nu \\ -\alpha \nu \end{pmatrix}$, \quad $\nu \in \mathbb{Q}$  \\
~~$C$ & $\alpha \nu^2/2-1/24$ \\
  \hline
\end{tabular}
\\[2mm]
\caption{Modular triples for Example 1}\label{tab-1}
\end{table}
Here to make $A$ a positive definite rational matrix we need $\alpha\in \mathbb{Q}$ and $\alpha>\frac{1}{2}$. Zagier stated \eqref{Zagier-infinite-case} and provided a proof for it. Utilizing \eqref{Jacobi}, his result can be stated in the following equivalent form.
\begin{theorem}\label{thm-1}
We have for $\alpha>\frac{1}{2}$ that
\begin{align}
\sum_{i,j\geq 0}\frac{q^{\frac{\alpha}{2}i^2+(1-\alpha)ij+\frac{\alpha}{2}j^2+\alpha \nu i -\alpha \nu j}}{(q;q)_i(q;q)_j}=\frac{(-q^{\frac{\alpha}{2}+\alpha \nu},-q^{\frac{\alpha}{2}-\alpha \nu},q^{\alpha};q^{\alpha})_\infty}{(q;q)_\infty}.
\end{align}
\end{theorem}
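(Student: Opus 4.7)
The plan is to diagonalize the quadratic form by the substitution $n = i - j$, collapse the resulting inner sum using the Durfee rectangle identity, and then invoke the Jacobi triple product \eqref{Jacobi} to convert the remaining theta series into the claimed infinite product.

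The algebraic identity
\[
\tfrac{\alpha}{2}i^{2} + (1-\alpha)ij + \tfrac{\alpha}{2}j^{2} = \tfrac{\alpha}{2}(i-j)^{2} + ij
\]
shows that the exponent on the sum side equals $\tfrac{\alpha}{2}(i-j)^{2} + \alpha\nu(i-j) + ij$. I reindex by setting $n = i-j \in \mathbb{Z}$ and $k = \min(i,j) \geq 0$; regardless of the sign of $n$ one has $\{i,j\} = \{k, k+|n|\}$ and $ij = k(k+|n|)$, so the double sum regroups as
\[
\sum_{n\in\mathbb{Z}} q^{\frac{\alpha}{2}n^{2}+\alpha\nu n} \sum_{k\geq 0} \frac{q^{k(k+|n|)}}{(q;q)_{k}(q;q)_{k+|n|}}.
\]

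The inner sum is the classical Durfee rectangle identity
\[
\sum_{k\geq 0} \frac{q^{k(k+m)}}{(q;q)_{k}(q;q)_{k+m}} = \frac{1}{(q;q)_{\infty}}, \qquad m \geq 0,
\]
proved combinatorially by the unique decomposition of a partition as a $k\times(k+m)$ rectangle together with two free partitions fitting to the right and below it (the $m=0$ case is the standard Durfee square). Applying it term-by-term yields
\[
\sum_{i,j \geq 0} \frac{q^{\frac{\alpha}{2}i^{2}+(1-\alpha)ij+\frac{\alpha}{2}j^{2}+\alpha\nu i - \alpha\nu j}}{(q;q)_{i}(q;q)_{j}} = \frac{1}{(q;q)_{\infty}} \sum_{n\in\mathbb{Z}} q^{\frac{\alpha}{2}n^{2}+\alpha\nu n}.
\]
Finally, rewriting $\tfrac{\alpha}{2}n^{2} + \alpha\nu n = \alpha\binom{n}{2} + \alpha\bigl(\tfrac{1}{2}+\nu\bigr)n$ and invoking \eqref{Jacobi} with base $q^{\alpha}$ and $z = -q^{\alpha(1/2+\nu)}$ transforms the theta sum into $(q^{\alpha}, -q^{\alpha/2+\alpha\nu}, -q^{\alpha/2-\alpha\nu}; q^{\alpha})_{\infty}$, which matches the numerator on the right.

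The only substantive input is the Durfee rectangle identity; everything else is algebraic bookkeeping. This identity is classical (see Andrews, \emph{The Theory of Partitions}), but if one prefers an algebraic derivation it follows from induction on $m$, telescoping the factor $1-q^{k+m}$ back to the $m=0$ Durfee square case, which is itself a direct consequence of \eqref{Euler}. I would expect this step to be the only place where care is needed; the rest of the argument is essentially Zagier's original route to \eqref{Zagier-infinite-case}, re-expressed in sum-equals-product form via Jacobi's triple product.
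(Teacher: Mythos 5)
Your proposal is correct and follows essentially the same route as the paper's proof (which reproduces Zagier's argument): complete the square to write the exponent as $\frac{\alpha}{2}(i-j)^2+\alpha\nu(i-j)+ij$, collapse the inner sum over fixed $i-j$ via the Durfee rectangle identity, and finish with the Jacobi triple product. The only difference is cosmetic — you spell out the reindexing by $k=\min(i,j)$ and the $z$-specialization in \eqref{Jacobi} explicitly, both of which check out.
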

For specific values of $\alpha$ and $\nu$, we can find the level $N$ for which the Nahm sum in this example is essentially modular on $\Gamma_1(N)$.

Here we reproduce Zagier's proof \cite[p.\ 46]{Zagier}.
\begin{proof}
We have
\begin{align*}
\text{LHS}&=\sum_{i,j\geq 0}\frac{q^{\frac{\alpha}{2}(i-j)^2+\alpha \nu(i-j)+ij}}{(q;q)_i(q;q)_j} =\sum_{n=-\infty}^\infty q^{\frac{\alpha}{2}n^2+\alpha \nu n} \sum_{i-j=n}\frac{q^{ij}}{(q;q)_i(q;q)_j} \\
&=\frac{1}{(q;q)_\infty}\sum_{n=-\infty}^\infty q^{\frac{\alpha}{2}n^2+\alpha \nu n}=\text{RHS}.
\end{align*}
Here for the last second equality we used the Durfee rectangle identity: for any fixed integer $n$,
\begin{align}\label{id-diff}
\sum_{i-j=n}  \frac{q^{ij}}{(q;q)_i(q;q)_j}=\sum_{j=0}^\infty \frac{q^{j(j+n)}}{(q;q)_j(q;q)_{j+n}}=\frac{1}{(q;q)_\infty},
\end{align}
and for the last equality we employed \eqref{Jacobi}.
\end{proof}

Theorem \ref{thm-1} can also be regarded as a special case of the following identity found by Cao and Wang \cite[Theorem 3.4]{Cao-Wang}:
\begin{align}\label{eq-Cao-Wang-1}
\sum_{i,j\geq 0} \frac{u^{i-j}q^{\binom{i}{2}+\binom{j+1}{2}+a\binom{j-i}{2}}}{(q;q)_i(q;q)_j}=\frac{(-uq^a,-q/u,q^{a+1};q^{a+1})_\infty}{(q;q)_\infty}.
\end{align}
In fact, setting $a=\alpha-1$ and $u=q^{\alpha \nu+1-\frac{\alpha}{2}}$ in \eqref{eq-Cao-Wang-1}, we recover Theorem \ref{thm-1}. Note that the proof of the identity \eqref{eq-Cao-Wang-1} given in \cite{Cao-Wang} is different from the above. It does not use \eqref{id-diff}. Instead, it is based on the integral method and repeated use of \eqref{Jacobi}.

%

\subsection{Example 2.}\label{sec-exam2}
The modular triples for this example are given in Table \ref{tab-2}.
\begin{table}[H]
\centering
\begin{tabular}{c|ccccc}
\hline
\padedvphantom{I}{3ex}{3ex}
  $A$ & \multicolumn{5}{c}{$\begin{pmatrix} 2 & 1 \\ 1 & 1 \end{pmatrix}$}
   \\
  \hline
    \padedvphantom{I}{3ex}{3ex}
  $B$ & $\begin{pmatrix} -1 \\ 1/2 \end{pmatrix}$ & $\begin{pmatrix} 0 \\ 0 \end{pmatrix}$  & $\begin{pmatrix} 0 \\ 1/2 \end{pmatrix}$
& $\begin{pmatrix} 1 \\ 1/2 \end{pmatrix}$ & $\begin{pmatrix} 1 \\ 1  \end{pmatrix}$    \\
  ~~$C$ & $1/8$ & $-1/32$ & $0$ & $1/8$ & $7/32$   \\
  ~~$N$ & $256$ & $1024$ & $32$ & $256$ & $1024$ \\
  \hline
\end{tabular}
\\[2mm]
\caption{Modular triples for Example 2}\label{tab-2}
\end{table}
Let $\mathrm{denom}(C)$ denote the denominator of a rational number $C$ in reduced form. Here and in Table \ref{tab-3} below, we record in the fourth row the levels $N$ such that the Nahm sums $f_{A,B,C}(q^{\mathrm{denom}(C)})$ are  modular functions on $\Gamma_1(N)$.

\begin{theorem}\label{thm-2}
We have
\begin{align}
\sum_{i,j\geq 0} \frac{q^{i^2+ij+\frac{1}{2}j^2-i+\frac{1}{2}j}}{(q;q)_i(q;q)_j}&=2\frac{(q^4;q^4)_\infty}{(q;q)_\infty},  \label{exam2-1}\\
\sum_{i,j\geq 0} \frac{q^{2i^2+2ij+j^2}}{(q^2;q^2)_i(q^2;q^2)_j}&=\frac{1}{(q,q^4,q^7;q^8)_\infty}, \label{exam2-2}\\
\sum_{i,j\geq 0}\frac{q^{i^2+ij+\frac{1}{2}j^2+\frac{1}{2}j}}{(q;q)_i(q;q)_j}&=\frac{(q^2;q^2)_\infty^3}{(q;q)_\infty^2 (q^4;q^4)_\infty}, \label{exam2-3} \\
\sum_{i,j\geq 0}\frac{q^{i^2+ij+\frac{1}{2}j^2+i+\frac{1}{2}j}}{(q;q)_i(q;q)_j}&=\frac{(q^4;q^4)_\infty}{(q;q)_\infty},  \label{exam2-4}\\
\sum_{i,j\geq 0}\frac{q^{2i^2+2ij+j^2+2i+2j}}{(q^2;q^2)_i(q^2;q^2)_j}&=\frac{1}{(q^3,q^4,q^5;q^8)_\infty}. \label{exam2-5}
\end{align}
As a consequence, the Nahm sums $f_{A,B,C}(q^{\mathrm{denom}(C)})$ are modular functions on $\Gamma_1(N)$ for $(A,B,C,N)$ given in Table \ref{tab-2}.
\end{theorem}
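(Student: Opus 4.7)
The plan is to split the five identities into two groups, each handled by a different reduction. For \eqref{exam2-1}, \eqref{exam2-3}, and \eqref{exam2-4}, the shared feature is that the $j$-dependent part of the exponent has the form $(j^2+j)/2 + ij = j(j-1)/2 + j(i+1)$, so the inner sum over $j$ collapses by Euler's second identity in \eqref{Euler} with $z = q^{i+1}$ to $(-q^{i+1};q)_\infty = (-q;q)_\infty/(-q;q)_i$. After pulling out $(-q;q)_\infty$ and using $(q;q)_i(-q;q)_i = (q^2;q^2)_i$, each double sum reduces to a single sum of the form
\begin{align*}
(-q;q)_\infty \sum_{i\geq 0} \frac{q^{\phi(i)}}{(q^2;q^2)_i},
\end{align*}
with $\phi(i) = i^2-i$, $i^2$, and $i^2+i$ for \eqref{exam2-1}, \eqref{exam2-3}, and \eqref{exam2-4} respectively.

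Each such single sum is itself summable by a second application of Euler's second identity, now with $q$ replaced by $q^2$ and $z \in \{1, q, q^2\}$; this yields $(-1;q^2)_\infty = 2(-q^2;q^2)_\infty$, $(-q;q^2)_\infty$, and $(-q^2;q^2)_\infty$, respectively. Translating these back into the $J_m$ notation through $(-q;q)_\infty = J_2/J_1$, $(-q;q^2)_\infty = J_2^2/(J_1 J_4)$, and $(-q^2;q^2)_\infty = J_4/J_2$ produces the three stated right-hand sides. The leading factor $2$ in \eqref{exam2-1} arises precisely from the specialization $z=1$, via $(-1;q^2)_\infty = 2(-q^2;q^2)_\infty$; forgetting this constant is the only real pitfall in this group.

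For \eqref{exam2-2} and \eqref{exam2-5}, the plan is to derive them as double-sum variants of the G\"ollnitz--Gordon identities \eqref{Slater36} and \eqref{Slater34}. The key tool is the $q$-binomial expansion $(-q;q^2)_n = \sum_{k=0}^n \binom{n}{k}_{q^2} q^{k^2}$. Inserting this into the left-hand side of \eqref{Slater36} and setting $j = n-k$ gives
\begin{align*}
\sum_{n\geq 0}\frac{q^{n^2}(-q;q^2)_n}{(q^2;q^2)_n} = \sum_{k,j\geq 0}\frac{q^{(k+j)^2+k^2}}{(q^2;q^2)_k(q^2;q^2)_j} = \sum_{k,j\geq 0}\frac{q^{2k^2+2kj+j^2}}{(q^2;q^2)_k(q^2;q^2)_j},
\end{align*}
which is the left-hand side of \eqref{exam2-2}. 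The identical manipulation applied to \eqref{Slater34} yields \eqref{exam2-5}.

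Once all five identities are established, modularity of $f_{A,B,C}(q^{\mathrm{denom}(C)})$ on $\Gamma_1(N)$ for the levels $N$ in Table \ref{tab-2} follows from the framework of Section \ref{sec-modularity}: the rescaled product sides, after multiplication by $q^{C\cdot \mathrm{denom}(C)}$, become generalized Dedekind eta-products of level $N$, and the integrality conditions of Lemma \ref{lem-Robins} can be checked directly or with the \texttt{thetaids} package. The main conceptual work is in the $q$-series reductions; the modularity verification is routine and essentially automated.
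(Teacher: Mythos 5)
Your proof is correct. For \eqref{exam2-1}, \eqref{exam2-3} and \eqref{exam2-4} your reduction coincides with the paper's: sum over $j$ by Euler's second identity, absorb $(-q^{i+1};q)_\infty=(-q;q)_\infty/(-q;q)_i$ into $(q^2;q^2)_i$, then apply Euler again with $q\mapsto q^2$ and $z\in\{1,q,q^2\}$, the factor $2$ in \eqref{exam2-1} coming from $(-1;q^2)_\infty=2(-q^2;q^2)_\infty$ exactly as in the paper. For \eqref{exam2-2} and \eqref{exam2-5} you genuinely diverge. The paper also sums over $j$ first (taking $(u,v,q)=(1,1,q^2)$ resp.\ $(q^2,q^2,q^2)$ in its generating function $F(u,v;q)$), arrives at single sums with denominator $(-q;q^2)_i(q^2;q^2)_i$, and evaluates them via Slater's (S.\ 39) and (S.\ 38) with $q$ replaced by $-q$. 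You instead expand $(-q;q^2)_n=\sum_{k=0}^{n}\binom{n}{k}_{q^2}(q^2)^{\binom{k}{2}}q^{k}=\sum_{k}\binom{n}{k}_{q^2}q^{k^2}$ inside the G\"ollnitz--Gordon identities \eqref{Slater36} and \eqref{Slater34} and reindex with $n=k+j$, turning $q^{n^2+k^2}$ into $q^{2k^2+2kj+j^2}$ and producing the double sums directly; this is essentially the Uncu-style derivation that the paper acknowledges in a remark but does not use in its proof. Your route needs only the G\"ollnitz--Gordon identities themselves and avoids the sign substitution $q\to-q$ in a different Slater entry, while the paper's route keeps all five cases inside one uniform generating-function computation. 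The modularity conclusion is handled identically in both, via Lemma \ref{lem-Robins} and the discussion of Section \ref{sec-modularity}.
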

Here for the second and fifth identities, we have replaced $q$ by $q^2$ so that the series contain only integral powers of $q$. We will do similar replacements for other examples as well. Here and in most of the examples, we will only prove the identities. The last assertions on modularity can be easily verified following the discussion in Section \ref{sec-modularity}. See Example 7 for instance of such verifications.
\begin{proof}
We define
\begin{align}
F(u,v;q)&=\sum_{i,j\geq 0}\frac{q^{i^2+ij+\frac{1}{2}j^2}u^iv^j}{(q;q)_i(q;q)_j} =\sum_{i\geq 0}\frac{q^{i^2}u^i}{(q;q)_i} \sum_{j\geq 0}\frac{q^{\frac{1}{2}(j^2-j)}\cdot (q^{i+\frac{1}{2}}v)^j}{(q;q)_j} \nonumber \\
&=\sum_{i\geq 0}\frac{q^{i^2}u^i}{(q;q)_i}(-q^{i+\frac{1}{2}}v;q)_\infty  =(-q^{\frac{1}{2}}v;q)_\infty \sum_{i\geq 0}\frac{q^{i^2}u^i}{(q,-q^{\frac{1}{2}}v;q)_i}. \label{Exam2-F}
\end{align}

Setting $(u,v)=(q^{-1},q^{\frac{1}{2}})$, we have by \eqref{Exam2-F} and \eqref{Euler} that
\begin{align}
F(q^{-1},q^{\frac{1}{2}};q)&=(-q;q)_\infty \sum_{i=0}^\infty \frac{q^{i^2-i}}{(q^2;q^2)_i}=(-q;q)_\infty (-1;q^2)_\infty=2\frac{(q^4;q^4)_\infty}{(q;q)_\infty}.
\end{align}
This proves \eqref{exam2-1}.

Setting $(u,v,q)$ as $(1,1,q^2)$, we have by \eqref{Exam2-F} and \eqref{Euler} that
\begin{align}\label{exam2-2-proof}
F(1,1;q^2)=(-q;q^2)_\infty \sum_{i=0}^\infty \frac{q^{2i^2}}{(-q,q^2;q^2)_i}.
\end{align}
Replacing $q$ by $-q$ in \eqref{Slater39} and substituting it into \eqref{exam2-2-proof}, we obtain \eqref{exam2-2}.

Setting $(u,v)=(1,q^{\frac{1}{2}})$ and using \eqref{Euler}, we have
\begin{align}
F(1,q^{\frac{1}{2}};q)=(-q;q)_\infty \sum_{i=0}^\infty \frac{q^{i^2}}{(q^2;q^2)_i}=(-q;q)_\infty (-q;q^2)_\infty =\frac{(q^2;q^4)_\infty}{(q;q^2)_\infty^2}.
\end{align}
This proves \eqref{exam2-3}.

Setting $(u,v)=(q,q^{\frac{1}{2}})$ and using \eqref{Euler}, we have
\begin{align}
F(q,q^{\frac{1}{2}};q)=(-q;q)_\infty \sum_{i=0}^\infty \frac{q^{i^2+i}}{(q^2;q^2)_i}=(-q;q)_\infty (-q^2;q^2)_\infty =\frac{(q^4;q^4)_\infty}{(q;q)_\infty}.
\end{align}
This proves \eqref{exam2-4}.

Setting $(q,u,v)$ as $(q^2,q^2,q^2)$, we have by \eqref{Exam2-F}  that
\begin{align}
F(q^2,q^2;q^2)&=(-q^3;q^2)_\infty \sum_{i=0}^\infty \frac{q^{2i^2+2i}}{(q^2,-q^3;q^2)_i} =(-q;q^2)_\infty \sum_{i=0}^\infty \frac{q^{2i^2+2i}}{(-q;q^2)_{i+1}(q^2;q^2)_i}. \label{exam2-5-proof}
\end{align}
Replacing $q$ by $-q$ in \eqref{Slater38} and then substituting it into \eqref{exam2-5-proof}, we obtain \eqref{exam2-5}.
\end{proof}
\begin{rem}
This example has been discussed by Lee \cite[Example 3.5.2]{LeeThesis}. He offered a proof of \eqref{exam2-4} using Lebesgue's identity \eqref{Lebesgue}. To be specific, using the $q$-binomial identity
$$(-zq;q)_k=\sum_{r=0}^k \frac{(q;q)_k}{(q;q)_r(q;q)_{k-r}}q^{r(r+1)/2}z^r,$$
Lee \cite[Eq.\ (3.25)]{LeeThesis} rewrote the sum side of \eqref{Lebesgue} as
\begin{align}\label{eq-Lee}
\sum_{k=0}^\infty \frac{q^{k(k+1)/2}(-zq;q)_k}{(q;q)_k}=\sum_{i,j\geq 0}\frac{z^iq^{i^2+ij+\frac{j^2}{2}+i+\frac{j}{2}}}{(q;q)_i(q;q)_j}.
\end{align}
Setting $z=q^{-2}$, $q^{-1}$ and 1 in \eqref{eq-Lee}, we obtain \eqref{exam2-1}, \eqref{exam2-3} and \eqref{exam2-4}, respectively. But one cannot get \eqref{exam2-2} and \eqref{exam2-5} by specializing the choice of $z$. These two identities were also known in the literature. For example, Uncu \cite[Corollary 3.8]{Uncu} gave some double sum representations of the G\"ollnitz--Gordon identities (see \eqref{Slater34} and \eqref{Slater36}), which prove \eqref{exam2-2} and \eqref{exam2-5}.
\end{rem}
\begin{rem}
Warnaar pointed out to us that \eqref{exam2-2} is a special case of the following identity \cite[p.\ 235]{Warnaar}:
\begin{align}\label{eq-Warnaar}
\sum_{n_1,\dots,n_{k-1}\geq 0} \frac{q^{\frac{1}{2}(N_1^2+\cdots+N_{k-1}^2)}}{(q;q)_{n_1}\cdots (q;q)_{n_{k-1}}}=\frac{(-q^{\frac{1}{2}};q)_\infty}{(q;q)_\infty}(q^{\frac{k}{2}},q^{\frac{k}{2}+1},q^{k+1};q^{k+1})_\infty,
\end{align}
where $N_i=n_i+\cdots +n_{k-1}$. Indeed, setting $k=3$ in this identity we obtain \eqref{exam2-2}. Meanwhile, using Lebesgue's identity, we can also prove \eqref{exam2-4} using Lemma A.1 in \cite{Warnaar}.
\end{rem}
\begin{rem}\label{rem-exam2}
Let $A$ be a real positive definite symmetric $r\times r$-matrix, $B$ a vector of length $r$ and $C$ a scalar. Let $A',B'$ and $C'$ be the symmetric $2r\times 2r$-matrix, the vector of length $2r$ and the scalar, resp., given by
$$A'=\begin{pmatrix} 2A &I_r \\ I_r & I_r \end{pmatrix}, \quad B'=\begin{pmatrix} 2B \\ 1/2 \\ \vdots \\ 1/2 \end{pmatrix}, \quad C'=2C+\frac{r}{24}.$$
Vlasenko and Zwegers \cite[Theorem 4.2]{VZ} proved that
\begin{align}\label{eq-VZ}
f_{A',B',C'}(q)=q^{r/24}\frac{(q^2;q^2)_\infty^r}{(q;q)_\infty^r}f_{A,B,C}(q^2).
\end{align}
If we set $r=1$, $(A,B,C)=(1,-1/2,1/24)$, $(1,0,-1/48)$ and $(1,1/2,1/24)$ in \eqref{eq-VZ} and then use \eqref{Euler}, we obtain \eqref{exam2-1}, \eqref{exam2-3} and \eqref{exam2-4}, respectively.
\end{rem}

\subsection{Example 3}\label{sec-exam3}
The modular triples for this example are given in Table \ref{tab-3}\footnote{Here the last value of $C$ has been corrected. In \cite[Table 2]{Zagier} it was written as $19/24$.}.
\begin{table}[H]
\centering
\begin{tabular}{c|ccccc}
  \hline
    \padedvphantom{I}{3ex}{3ex}
  $A$ &  \multicolumn{5}{c}{$\begin{pmatrix} 1 & -1 \\ -1 & 2 \end{pmatrix}$}   \\
  \hline
    \padedvphantom{I}{3ex}{3ex}
  $B$ & $\begin{pmatrix}  -3/2 \\ 2 \end{pmatrix}$ & $\begin{pmatrix} 0 \\ 0  \end{pmatrix}$ & $\begin{pmatrix} -1/2 \\ 1 \end{pmatrix}$ &  $\begin{pmatrix} 1/2 \\ 0  \end{pmatrix}$ & $\begin{pmatrix} 0 \\ 1  \end{pmatrix}$    \\
  ~~$C$ & $25/24$ & $-5/96$ & $1/6$ & $1/24$ & $19/96$ \\
  ~~$N$ & $2304$ & $9216$ & $576$ & $2304$ & $9216$ \\
  \hline
\end{tabular}
\\[2mm]
\caption{Modular triples for Example 3}\label{tab-3}
\end{table}

\begin{theorem}\label{thm-3}
We have
\begin{align}
\sum_{i,j\geq 0}\frac{q^{\frac{1}{2}i^2-ij+j^2-\frac{3}{2}i+2j}}{(q;q)_i(q;q)_j} &=2q^{-1}\frac{(q^2;q^2)_\infty^4}{(q;q)_\infty^3 (q^4;q^4)_\infty}, \label{exam3-1} \\
\sum_{i,j\geq 0}\frac{q^{i^2-2ij+2j^2}}{(q^2;q^2)_i (q^2;q^2)_j} &=\frac{(q^2;q^2)_\infty^3 (q^3,q^5,q^8;q^8)_\infty}{(q;q)_\infty^2 (q^4;q^4)_\infty^2}, \label{exam3-2} \\
\sum_{i,j\geq 0}\frac{q^{\frac{1}{2}i^2-ij+j^2-\frac{1}{2}i+j}}{(q;q)_i (q;q)_j}&=2\frac{(q^2;q^2)_\infty (q^4;q^4)_\infty}{(q;q)_\infty^2}, \label{exam3-3} \\
\sum_{i,j\geq 0}\frac{q^{\frac{1}{2}i^2-ij+j^2+\frac{1}{2}i}}{(q;q)_i (q;q)_j} &=\frac{(q^2;q^2)_\infty^4}{(q;q)_\infty^3 (q^4;q^4)_\infty}, \label{exam3-4} \\
\sum_{i,j\geq 0} \frac{q^{i^2-2ij+2j^2+2j}}{(q^2;q^2)_i (q^2;q^2)_j} &=\frac{(q^2;q^2)_\infty^3  (q,q^7,q^8;q^8)_\infty}{(q;q)_\infty^2 (q^4;q^4)_\infty^2}. \label{exam3-5}
\end{align}
As a consequence, the Nahm sums $f_{A,B,C}(q^{\mathrm{denom}(C)})$ are modular functions on $\Gamma_1(N)$ for $(A,B,C,N)$ given in Table \ref{tab-3}.
\end{theorem}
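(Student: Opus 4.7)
My plan is to adapt the bivariate generating function approach used in the proof of Theorem~\ref{thm-2}. I introduce
\[
F(u,v;q)=\sum_{i,j\geq 0}\frac{q^{\frac{1}{2}i^{2}-ij+j^{2}}u^{i}v^{j}}{(q;q)_{i}(q;q)_{j}}
\]
and evaluate the inner $i$-sum by writing $\frac{1}{2}i^{2}-ij=\binom{i}{2}+i(\frac{1}{2}-j)$ and applying \eqref{Euler}, obtaining
\[
F(u,v;q)=\sum_{j\geq 0}\frac{q^{j^{2}}v^{j}}{(q;q)_{j}}(-uq^{\frac{1}{2}-j};q)_{\infty}.
\]
The reflection identity $(-uq^{\frac{1}{2}-j};q)_{\infty}=u^{j}q^{-j^{2}/2}(-u^{-1}q^{\frac{1}{2}};q)_{j}(-uq^{\frac{1}{2}};q)_{\infty}$ then absorbs the negative power of $q$, yielding the working form
\[
F(u,v;q)=(-uq^{\frac{1}{2}};q)_{\infty}\sum_{j\geq 0}\frac{q^{j(j+1)/2}(uvq^{-\frac{1}{2}})^{j}(-u^{-1}q^{\frac{1}{2}};q)_{j}}{(q;q)_{j}},
\]
which I will then specialize for each of the five identities.

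For the three identities \eqref{exam3-1}, \eqref{exam3-3}, \eqref{exam3-4}, the linear shifts $(u,v)=(q^{-3/2},q^{2})$, $(q^{-1/2},q)$, $(q^{1/2},1)$ all satisfy $uvq^{-1/2}=1$, so the remaining $j$-sum takes the form $\sum_{j\geq 0}q^{j(j+1)/2}(-zq;q)_{j}/(q;q)_{j}$ with $z=u^{-1}q^{-1/2}$. Lebesgue's identity \eqref{Lebesgue} evaluates this as $(-zq^{2};q^{2})_{\infty}(-q;q)_{\infty}$, and the anomalous factors of $2$ in \eqref{exam3-1} and \eqref{exam3-3} come out automatically from $(-1;q)_{\infty}=2(-q;q)_{\infty}$ and $(-q^{-1};q)_{\infty}=2q^{-1}(1+q)(-q;q)_{\infty}$, with the latter also producing the $q^{-1}$ in \eqref{exam3-1}.

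For \eqref{exam3-2} and \eqref{exam3-5}, I replace $q$ by $q^{2}$ in the reduction formula and take $(u,v)=(1,1)$ and $(1,q^{2})$ respectively, so that $(-u^{-1}q;q^{2})_{j}=(-q;q^{2})_{j}$ and the $j$-sums reduce to the G\"ollnitz--Gordon identities \eqref{Slater36} and \eqref{Slater34}. The conversion of the resulting single-product expressions to the right-hand $J$-form hinges on the single identity
\[
J_{1,8}J_{3,8}=(q,q^{3},q^{5},q^{7};q^{8})_{\infty}J_{8}^{2}=(q;q^{2})_{\infty}J_{8}^{2}=J_{1}J_{8}^{2}/J_{2},
\]
obtained by collecting odd residues modulo $8$. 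The modularity of $f_{A,B,C}(q^{\mathrm{denom}(C)})$ on the prescribed $\Gamma_{1}(N)$ then follows from the framework recalled in Section~\ref{sec-modularity}. I do not anticipate a genuine conceptual obstacle; the only delicate step is the careful bookkeeping of the half-integer powers of $q$ produced by $(-uq^{1/2-j};q)_{\infty}$ and the final conversion into the prescribed $J_{m}, J_{a,m}$ product notation, but no step uses anything beyond Euler's identity, Lebesgue's identity, and the two Slater identities \eqref{Slater34}, \eqref{Slater36}.
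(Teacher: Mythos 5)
Your proposal is correct and follows essentially the same route as the paper's proof: the same $F(u,v;q)$, summation over $i$ first via Euler's identity, the same five specializations $(u,v)=(q^{-3/2},q^2)$, $(1,1)$ with $q\to q^2$, $(q^{-1/2},q)$, $(q^{1/2},1)$, $(1,q^2)$ with $q\to q^2$, and the same reduction to Lebesgue's identity with $z=q,1,q^{-1}$ and to the G\"ollnitz--Gordon identities \eqref{Slater36} and \eqref{Slater34}. The only cosmetic difference is that you package the handling of $(-uq^{1/2-j};q)_\infty$ into a single uniform reflection formula before specializing, whereas the paper performs that finite-product reversal inline in each case.
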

\begin{proof}
We define
\begin{align}
F(u,v;q):=\sum_{i,j\geq 0}\frac{q^{\frac{1}{2}i^2-ij+j^2}u^iv^j}{(q;q)_i(q;q)_j}.
\end{align}
Summing over $i$ first using \eqref{Euler}, we obtain
\begin{align}\label{exam3-F}
F(u,v;q)=\sum_{j=0}^\infty \frac{q^{j^2}v^j (-uq^{\frac{1}{2}-j};q)_\infty}{(q;q)_j}.
\end{align}

Setting $(u,v)=(q^{-\frac{3}{2}},q^2)$ in \eqref{exam3-F}, we have
\begin{align*}
&F(q^{-\frac{3}{2}},q^2;q)=\sum_{j=0}^\infty \frac{q^{j^2+2j}}{(q;q)_j}(-q^{-1-j};q)_\infty \nonumber \\
&=2q^{-1}(-q;q)_\infty \sum_{j=0}^\infty \frac{q^{j(j+1)/2} (-q;q)_{j+1}}{(q;q)_j} =2q^{-1}(-q;q)_\infty^2 (-q;q^2)_\infty.
\end{align*}
This proves \eqref{exam3-1}. Here the last equality follows from Lebesgue's identity \eqref{Lebesgue} with $z=q$.

Setting $(u,v,q)$ as $(1,1,q^2)$ in \eqref{exam3-F}, we have
\begin{align}
F(1,1;q^2)&=\sum_{j=0}^\infty \frac{q^{2j^2}}{(q^2;q^2)_j}(-q^{1-2j};q^2)_\infty =(-q;q^2)_\infty \sum_{j=0}^\infty \frac{q^{j^2}(-q;q^2)_j}{(q^2;q^2)_j} \nonumber \\
&=(-q;q^2)_\infty^2 \frac{(q^3,q^5,q^8;q^8)_\infty}{(q^2;q^2)_\infty}.
\end{align}
This proves \eqref{exam3-2}. Here for the last equality we used \eqref{Slater36}.

Setting $(u,v)=(q^{-\frac{1}{2}},q)$ in \eqref{exam3-F}, we have
\begin{align}
F(q^{-\frac{1}{2}},q;q)&=\sum_{j=0}^\infty \frac{q^{j^2+j}}{(q;q)_j}(-q^{-j};q)_\infty =(-1;q)_\infty \sum_{j=0}^\infty \frac{q^{j(j+1)/2} (-q;q)_j}{(q;q)_j} \nonumber \\
&=2(-q;q)_\infty^2 (-q^2;q^2)_\infty.
\end{align}
This proves \eqref{exam3-3}. Here for the last equality we used \eqref{Lebesgue} with $z=1$.

Setting $(u,v)=(q^{\frac{1}{2}},1)$ in \eqref{exam3-F}, we have
\begin{align}
F(q^{\frac{1}{2}},1;q)&=\sum_{j=0}^\infty \frac{q^{j^2}(-q^{1-j};q)_\infty}{(q;q)_j} =(-q;q)_\infty \sum_{j=0}^\infty \frac{q^{j(j+1)/2}(-1;q)_j}{(q;q)_j} \nonumber \\
&=(-q;q)_\infty^2 (-q;q^2)_\infty.
\end{align}
This proves \eqref{exam3-4}. Here for the last equality we used \eqref{Lebesgue} with $z=q^{-1}$.

Setting $(u,v,q)$ as $(1,q^2,q^2)$ in \eqref{exam3-F}, we have
\begin{align}
F(1,q^2;q^2)&=\sum_{j=0}^\infty \frac{q^{2j^2+2j}}{(q^2;q^2)_j}(-q^{1-2j};q^2)_\infty =(-q;q^2)_\infty \sum_{j=0}^\infty \frac{q^{j^2+2j}(-q;q^2)_j}{(q^2;q^2)_j}. \label{exam3-final}
\end{align}
Substituting \eqref{Slater34} into \eqref{exam3-final}, we obtain \eqref{exam3-5}.
\end{proof}
\begin{rem}
Equivalent forms of the identities in Theorem \ref{thm-3} were given by Calinescu, Milas and Penn \cite[Eqs. (6.1)--(6.5)]{CMP}. They provided detailed proof for \eqref{exam3-2} and pointed out that other identities follow in a similar way. The proof here is essentially the same with that used in the proof of \cite[Theorem 6.1]{CMP}.
\end{rem}

\subsection{Example 4}\label{sec-exam4}
The modular triples for this example are given in Table \ref{tab-4}.
\begin{table}[H]
\centering
\begin{tabular}{c|cc}

  \hline   
    \padedvphantom{I}{3ex}{3ex}
 $A$ & \multicolumn{2}{c}{$\begin{pmatrix} 4 & 1 \\ 1 & 1 \end{pmatrix}$} \\
\hline
  \padedvphantom{I}{3ex}{3ex}
  $B$ & $\begin{pmatrix} 0 \\ 1/2  \end{pmatrix}$  & $\begin{pmatrix} 2 \\ 1/2  \end{pmatrix}$
   \\
  ~~$C$ & $1/120$ & $49/120$ \\
  \hline
\end{tabular}
\\[2mm]
\caption{Modular triples for Example 4}\label{tab-4}
\end{table}

Recall that Zagier discovered the formulas \eqref{Zagier-exam4-1} and \eqref{Zagier-exam4-2} but did not prove them.  We state the following equivalent formulas and give a proof.
\begin{theorem}\label{thm-4}
We have
\begin{align}
\sum_{i,j\geq 0}\frac{q^{2i^2+ij+\frac{1}{2}j^2+\frac{1}{2}j}}{(q;q)_i(q;q)_j}&=\frac{(q^4,q^6,q^{10};q^{10})_\infty}{(q;q)_\infty}, \label{exam4-1} \\
\sum_{i,j\geq 0}\frac{q^{2i^2+ij+\frac{1}{2}j^2+2i+\frac{1}{2}j}}{(q;q)_i(q;q)_j}&=\frac{(q^2,q^8,q^{10};q^{10})_\infty}{(q;q)_\infty}. \label{exam4-2}
\end{align}
As a consequence, the Nahm sums $f_{A,B,C}(q^{120})$ for $(A,B,C)$ in Example 4 are modular functions on $\Gamma_1(28800)$.
\end{theorem}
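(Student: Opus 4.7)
The plan is to imitate the pattern used for Examples 2 and 3, summing the double series over the ``easy'' index first and reducing to a single sum that matches a Slater-type identity. Define
\[
F(u,v;q):=\sum_{i,j\geq 0}\frac{q^{2i^2+ij+\frac{1}{2}j^2}\,u^iv^j}{(q;q)_i(q;q)_j}.
\]
Performing the inner sum over $j$ (with $i$ fixed) and applying the second Euler identity in \eqref{Euler} with $z=q^{i+1/2}v$ gives
\[
F(u,v;q)=\sum_{i\geq 0}\frac{q^{2i^2}u^i}{(q;q)_i}\bigl(-q^{i+\frac12}v;q\bigr)_\infty.
\]
The two identities \eqref{exam4-1} and \eqref{exam4-2} then correspond to the specializations $(u,v)=(1,q^{1/2})$ and $(u,v)=(q^2,q^{1/2})$, respectively.

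For \eqref{exam4-1}, I would use $(-q^{i+1};q)_\infty=(-q;q)_\infty/(-q;q)_i$ together with the basic telescoping $(q;q)_i(-q;q)_i=(q^2;q^2)_i$ to obtain
\[
F(1,q^{1/2};q)=(-q;q)_\infty\sum_{i\geq 0}\frac{q^{2i^2}}{(q^2;q^2)_i}.
\]
The remaining sum is the first Rogers--Ramanujan identity \eqref{RR-1} with $q\mapsto q^2$, giving $1/(q^2,q^8;q^{10})_\infty$. Multiplying by $(-q;q)_\infty=(q^2;q^2)_\infty/(q;q)_\infty$ and using
\[
(q^2;q^2)_\infty=(q^2,q^4,q^6,q^8,q^{10};q^{10})_\infty
\]
yields exactly $(q^4,q^6,q^{10};q^{10})_\infty/(q;q)_\infty$, as required. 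The case \eqref{exam4-2} follows from the same steps, with the insertion of $u=q^2$ producing $\sum_{i\geq 0}q^{2i^2+2i}/(q^2;q^2)_i$, which evaluates by \eqref{RR-2} with $q\mapsto q^2$ to $1/(q^4,q^6;q^{10})_\infty$; the product identity above converts the result into $(q^2,q^8,q^{10};q^{10})_\infty/(q;q)_\infty$.

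I do not expect any serious obstacle here: the reduction to a single sum is immediate from Euler, and the resulting sums are literally Rogers--Ramanujan in base $q^2$. The only subtlety is bookkeeping the infinite products modulo $10$, but the elementary factorization of $(q^2;q^2)_\infty$ into $(q^{2k};q^{10})_\infty$ for $k=1,2,3,4,5$ makes this routine. Finally, the modularity assertion is established by rewriting each right-hand side (with the scalar $C=1/120$ or $49/120$ absorbed) as a generalized eta-product, applying the scaling $q\mapsto q^{120}$ to clear all fractional exponents, and invoking Lemma \ref{lem-Robins}; verification of the two congruences in Lemma \ref{lem-Robins} and computation of the resulting level (which should come out to a divisor of $28800$) is a mechanical application of the \texttt{thetaids} package along the lines sketched in Section \ref{sec-modularity}.
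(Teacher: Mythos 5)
Your proposal is correct and coincides with the paper's own proof: the same generating function $F(u,v;q)$, the same inner summation over $j$ via Euler's identity, the same specializations $(u,v)=(1,q^{1/2})$ and $(q^2,q^{1/2})$, and the same reduction to the Rogers--Ramanujan identities with $q\mapsto q^2$. The product bookkeeping and the modularity verification are also handled exactly as in the paper.
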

\begin{proof}
We define
\begin{align}\label{exam4-F-defn}
F(u,v;q):=\sum_{i,j\geq 0} \frac{q^{2i^2+ij+\frac{1}{2}j^2}u^iv^j}{(q;q)_i(q;q)_j}.
\end{align}
Summing over $j$ first using \eqref{Euler}, we deduce that
\begin{align}\label{exam4-F}
F(u,v;q)=\sum_{i=0}^\infty \frac{q^{2i^2}u^i}{(q;q)_i}\sum_{j=0}^\infty \frac{q^{\frac{1}{2}(j^2-j)}(q^{i+\frac{1}{2}}v)^j}{(q;q)_j} =\sum_{i=0}^\infty \frac{q^{2i^2}u^i}{(q;q)_i}(-q^{i+\frac{1}{2}}v;q)_\infty.
\end{align}
Setting $(u,v)=(1,q^{\frac{1}{2}})$ in \eqref{exam4-F}, we have
\begin{align}
F(1,q^{\frac{1}{2}};q)=(-q;q)_\infty \sum_{i=0}^\infty \frac{q^{2i^2}}{(q^2;q^2)_i}.
\end{align}
Using \eqref{RR-1}  with $q$ replaced by $q^2$, we obtain \eqref{exam4-1}.

Next, setting $(u,v)=(q^2,q^{\frac{1}{2}})$ in \eqref{exam4-F}, we deduce that
\begin{align}
F(q^2,q^{\frac{1}{2}};q)=(-q;q)_\infty \sum_{i=0}^\infty \frac{q^{2i^2+2i}}{(q^2;q^2)_i}.
\end{align}
Using \eqref{RR-2} with $q$ replaced by $q^2$, we obtain \eqref{exam4-2}.
\end{proof}
\begin{rem}\label{rem-exam4}
This example was implicitly proved by Vlasenko and Zwegers \cite{VZ}.
Let $r=1$, $(A,B,C)=(2,0,-1/60)$ and $(2,1,11/60)$ in \eqref{eq-VZ} and using \eqref{RR-1}--\eqref{RR-2}, we obtain Theorem \ref{thm-4}.
\end{rem}

\subsection{Example 5}\label{sec-exam5}
The modular triples for this example are given in Table \ref{tab-5}.
\begin{table}[H]
\centering
\begin{tabular}{c|cc}
  \hline
    \padedvphantom{I}{3ex}{3ex}
  $A$ & \multicolumn{2}{c}{$\begin{pmatrix} 1/3 & -1/3 \\ -1/3 & 4/3 \end{pmatrix}$} \\
  \hline
    \padedvphantom{I}{3ex}{3ex}
  $B$ & $\begin{pmatrix} -1/6 \\ 2/3 \end{pmatrix}$ & $\begin{pmatrix} 1/2 \\ 0 \end{pmatrix}$ \\
  ~~$C$ & $3/40$ & $1/120$ \\
  \hline
\end{tabular}
\\[2mm]
\caption{Modular triples for Example 5}\label{tab-5}
\end{table}


We find the desired identities with the help of Maple. But we are not able to prove them at this stage. Hence we state these identities as the following conjecture.
\begin{conj}\label{conj-exam5}
We have
\begin{align}
&\sum_{i,j\geq 0}\frac{q^{\frac{1}{2}i^2-ij+2j^2-\frac{1}{2}i+2j}}{(q^3;q^3)_i(q^3;q^3)_j}  \nonumber \\ &=3\frac{J_6J_{45}J_{18,90}J_{27,90}}{J_3^2J_{90}^2}-\frac{J_{10}J_{1,30}J_{4,30}J_{5,30}J_{6,30}J_{11,30}J_{14,30}}{J_3J_{30}^5J_{3,30}},  \label{exam5-1} \\
&\sum_{i,j\geq 0}\frac{q^{\frac{1}{2}i^2-ij+2j^2+\frac{3}{2}i}}{(q^3;q^3)_i (q^3;q^3)_j}\nonumber \\
&=\frac{J_{10}J_{2,30}J_{5,30}J_{7,30}J_{8,30}J_{12,30}J_{13,30}}{J_3J_{30}^5J_{9,30}}+3q^2\frac{J_6J_{45}J_{9,90}J_{36,90}}{J_3^2J_{90}^2}.  \label{exam5-2}
\end{align}
\end{conj}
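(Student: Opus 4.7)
The natural starting point, mirroring the strategy used for Examples~1--4 above, will be to introduce the bivariate generating function
\begin{align*}
F(u,v;q)=\sum_{i,j\geq 0}\frac{q^{\frac{1}{2}i^2-ij+2j^2}u^iv^j}{(q^3;q^3)_i(q^3;q^3)_j}
\end{align*}
and attempt to eliminate one index. Completing the square $\frac{1}{2}i^2-ij+2j^2=\frac{1}{2}(i-j)^2+\frac{3}{2}j^2$ and setting $n=i-j$, the left-hand side of \eqref{exam5-1} rewrites as
\begin{align*}
\sum_{n\in\mathbb{Z}}q^{(n^2-n)/2}\sum_{j\geq \max(0,-n)}\frac{q^{3j(j+1)/2}}{(q^3;q^3)_{n+j}(q^3;q^3)_j},
\end{align*}
so that the outer sum is already a theta series (evaluable by \eqref{Jacobi}) while the inner sum carries the characteristic exponent $\frac{3}{2}j^2+\frac{3}{2}j$ that appears in the Slater identities \eqref{Slater44} and \eqref{Slater46}. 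The first tactic I would try is to evaluate the inner $j$-sum in closed form, splitting $i$ into residue classes modulo $3$ in order to resolve the base mismatch between $q^{i^2/2}$ and $(q^3;q^3)_i$.

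If a direct single-sum reduction resists, the second route will be the integral method of Lemma~\ref{lem-integral}, which has been very effective for similar Rogers--Ramanujan-type double sums in Rosengren \cite{Rosengren}, Mc~Laughlin \cite{Laughlin}, Cao--Wang \cite{Cao-Wang}, and in the author's treatment of Example~8 below. The plan is to rewrite the inner sum as a contour integral of a ratio of infinite products and then evaluate it by summing residues over the two families of simple poles inside the contour. If this succeeds, each family should contribute precisely one of the two terms on the right-hand side, thereby explaining the bipartite structure of both \eqref{exam5-1} and \eqref{exam5-2}.

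The third and most rigorous route will be a modular-forms verification in the spirit of Section~\ref{sec-modularity}. One first shows that after multiplying by $q^{3/40}$ (respectively $q^{1/120}$) and rescaling $\tau\mapsto k\tau$ for a suitable positive integer $k$, both sides become modular functions on a common congruence subgroup $\Gamma_1(N)$; the level is driven by $\mathrm{lcm}(3,6,10,30,45,90)=90$ together with the $P_2$-conditions of Lemma~\ref{lem-Robins}. Once common modularity is established, equality reduces to matching sufficiently many initial $q$-coefficients against a Sturm-type bound, a task that the \texttt{thetaids} package is well suited to automate.

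The principal obstacle is precisely the two-term structure of the right-hand sides, with coefficients $(3,-1)$ in \eqref{exam5-1} and $(1,3q^2)$ in \eqref{exam5-2}. Standard Nahm sums collapse to a single infinite product, so neither a single Bailey-pair reduction nor a single residue evaluation is likely to produce the required decomposition without further input. The hardest step will be to identify the correct splitting mechanism that separates the level-$30$ piece from the level-$90$ piece; this may ultimately require interpreting each sum as a character of a module over some vertex operator or affine algebra, in the spirit of the Cherednik--Feigin treatment \cite{Feigin} of Example~10, rather than by purely $q$-series manipulation.
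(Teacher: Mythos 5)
The first thing to note is that the paper does not prove this statement: it is presented as Conjecture~\ref{conj-exam5}, found experimentally with Maple, and only confirmed later in \cite{CRW2024}. What the paper supplies is the \emph{formulation}: a 3-dissection of each sum driven by the congruences $\tfrac12 i^2-ij+2j^2-\tfrac12 i+2j\equiv 0,1\pmod 3$ and $\tfrac12 i^2-ij+2j^2+\tfrac32 i\equiv 0,2\pmod 3$, which force one dissection component to vanish; the component $F_0(q)$ (resp.\ $q^2G_2(q^3)$) appears to be a single product, and the two-term right-hand sides arise precisely because the remaining component $F_1(q)$ (resp.\ $G_0(q)$) is not. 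Your submission is a plan rather than a proof: none of the three routes is carried to completion, so it does not establish the identities.

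Two of the routes have concrete gaps. In the first, after setting $n=i-j$ the inner sum $\sum_{j\geq\max(0,-n)} q^{3j(j+1)/2}/\bigl((q^3;q^3)_{n+j}(q^3;q^3)_j\bigr)$ still depends on $n$: unlike Example~1, no cross term of the form $q^{3(n+j)j}$ survives to pair with the two Pochhammer symbols, so there is no analogue of the Durfee rectangle identity \eqref{id-diff}, the outer sum is not a free theta series, and \eqref{Jacobi} cannot be applied as you claim. The third route is circular as stated: a Sturm-type coefficient comparison presupposes that \emph{both} sides are modular forms on a common $\Gamma_1(N)$ of known weight, but the modularity of the Nahm sum on the left is exactly what is at stake; \texttt{thetaids} certifies generalized eta-quotients, not arbitrary $q$-hypergeometric double sums. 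Your diagnosis of the central obstacle --- the two-term structure of the right-hand sides, incompatible with a single Bailey-pair or single-residue collapse --- is accurate and matches the paper's observation that $F_1(q)$ and $G_0(q)$ admit no single-product representation, but naming the obstacle is not the same as overcoming it. For an actual proof (by $q$-series methods, in the spirit of your second route) see \cite{CRW2024}.
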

As a consequence of this conjecture, the Nahm sums $f_{A,B,C}(q^{120})$ for $(A,B,C)$ Example 5 are modular functions on $\Gamma_1(28800)$.

Below we discuss how this conjecture is formulated. We try to find the 3-dissections:
\begin{align}
\sum_{i,j\geq 0}\frac{q^{\frac{1}{2}i^2-ij+2j^2-\frac{1}{2}i+2j}}{(q^3;q^3)_i(q^3;q^3)_j}&=F_0(q^3)+qF_1(q^3)+q^2F_2(q^3), \\
\sum_{i,j\geq 0}\frac{q^{\frac{1}{2}i^2-ij+2j^2+\frac{3}{2}i}}{(q^3;q^3)_i (q^3;q^3)_j}&=G_0(q^3)+qG_1(q^3)+q^2G_2(q^3)
\end{align}
with $F_i(q), G_i(q)\in \mathbb{Z}[\![q]\!]$ ($i=1,2,3$).

It is easy to prove that
\begin{align*}
\frac{1}{2}i^2-ij+2j^2-\frac{1}{2}i+2j\equiv 0, 1 \pmod{3}, \quad \frac{1}{2}i^2-ij+2j^2+\frac{3}{2}i \equiv 0,2 \pmod{3}.
\end{align*}
Hence
\begin{align}
F_2(q)=0, \quad G_1(q)=0.
\end{align}
With the help of Maple, it appears that
\begin{align}
F_0(q)&=2\frac{J_2J_{15}J_{6,30}J_{9,30}}{J_1^2J_{30}^2}, \label{exam5-F0}\\
G_2(q)&=2\frac{J_2J_{15}J_{3,30}J_{12,30}}{J_1^2J_{30}^2} \label{exam5-G2}
\end{align}
and there are no single product representations for $F_1(q)$ and $G_0(q)$.
This then motivates us to subtract the original series from suitable multiples of $F_0(q^3)$ and $q^2G_2(q^3)$ so that the results have simple product representations. As a consequence, we find the identities stated in the above conjecture.

To prove Conjecture \ref{conj-exam5}, one might need to prove \eqref{exam5-F0} and \eqref{exam5-G2} first and then find representations for $F_1(q)$ and $G_0(q)$.

\subsection{Example 6}\label{sec-exam6}
The modular triples for this example are given in Table \ref{tab-6}.
\begin{table}[H]
\centering
\begin{tabular}{c|ccc}
  \hline
    \padedvphantom{I}{3ex}{3ex}
  $A$ &  \multicolumn{3}{c}{$\begin{pmatrix} 4 & 2 \\ 2 & 2 \end{pmatrix}$} \\
  \hline
    \padedvphantom{I}{3ex}{3ex}
  $B$ & $\begin{pmatrix}  0 \\ 0  \end{pmatrix}$ & $\begin{pmatrix} 1 \\ 0   \end{pmatrix}$ & $\begin{pmatrix} 2 \\ 1   \end{pmatrix}$  \\
  ~~$C$ & $-1/42$ & $5/42$ & $17/42$ \\
  \hline
\end{tabular}
\\[2mm]
\caption{Modular triples for Example 6}\label{tab-6}
\end{table}


\begin{theorem}\label{thm-6}
We have
\begin{align}
\sum_{i,j\geq 0} \frac{q^{2i^2+j^2+2ij}}{(q;q)_i(q;q)_j}&=\frac{(q^3,q^4,q^7;q^7)_\infty}{(q;q)_\infty}, \label{exam6-1} \\
\sum_{i,j\geq 0} \frac{q^{2i^2+j^2+2ij+i}}{(q;q)_i(q;q)_j} &=\frac{(q^2,q^5,q^7;q^7)_\infty}{(q;q)_\infty}, \label{exam6-2} \\
\sum_{i,j\geq 0} \frac{q^{2i^2+j^2+2ij+2i+j}}{(q;q)_i(q;q)_j}&=\frac{(q,q^6,q^7;q^7)_\infty}{(q;q)_\infty}. \label{exam6-3}
\end{align}
As a consequence, the Nahm sums $f_{A,B,C}(q^{42})$ for $(A,B,C)$ in Example 6 are modular functions on $\Gamma_1(1764)$.
\end{theorem}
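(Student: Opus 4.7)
The plan is to recognize the three identities of Theorem~\ref{thm-6} as the $k=3$ case of the classical Andrews--Gordon identity
\begin{align*}
\sum_{n_1,\ldots,n_{k-1}\ge 0} \frac{q^{N_1^2+\cdots+N_{k-1}^2+N_a+N_{a+1}+\cdots+N_{k-1}}}{(q;q)_{n_1}\cdots(q;q)_{n_{k-1}}}=\frac{(q^a,q^{2k+1-a},q^{2k+1};q^{2k+1})_\infty}{(q;q)_\infty},
\end{align*}
valid for $1\le a\le k$, where $N_\ell:=n_\ell+n_{\ell+1}+\cdots+n_{k-1}$. No new analytic ingredient is needed.

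First I would specialize to $k=3$, so that $N_1=n_1+n_2$ and $N_2=n_2$. A direct expansion gives $N_1^2+N_2^2=n_1^2+2n_1n_2+2n_2^2$, which under the relabeling $(n_1,n_2)=(j,i)$ is exactly the quadratic form $j^2+2ij+2i^2$ appearing in all three left-hand sides of Theorem~\ref{thm-6}. I would then match the linear correction case by case: $a=3$ contributes no linear term and produces the product $\frac{(q^3,q^4,q^7;q^7)_\infty}{(q;q)_\infty}$, giving \eqref{exam6-1}; $a=2$ contributes $N_2=i$ and produces $\frac{(q^2,q^5,q^7;q^7)_\infty}{(q;q)_\infty}$, giving \eqref{exam6-2}; and $a=1$ contributes $N_1+N_2=j+2i$ and produces $\frac{(q,q^6,q^7;q^7)_\infty}{(q;q)_\infty}$, giving \eqref{exam6-3}. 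This finishes the analytic part.

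For the final modularity assertion, I would follow the general recipe of Section~\ref{sec-modularity}: after the substitution $q\mapsto q^{42}$ and multiplication by the appropriate $q^{42C}$, each product side becomes a generalized Dedekind eta-product of level dividing $42\cdot 7=294$, and an application of Lemma~\ref{lem-Robins} (conveniently automated via the Maple package \texttt{thetaids}) confirms modularity on $\Gamma_1(1764)$. There is no substantive obstacle, since Andrews--Gordon does all of the combinatorial work; the only mild bookkeeping is computing $k$ and $N$ in the uniform level $\Gamma_1(kN)$, which is routine. If an entirely self-contained proof were preferred (avoiding a black-box appeal to Andrews--Gordon), an alternative would be to write $2i^2+2ij+j^2=i^2+(i+j)^2$, substitute $m=i+j$, and reduce the double sum to $\sum_m q^{m^2}/(q;q)_m$-type single sums via the $q$-binomial theorem; but this simply reconstructs the known $k=3$ proof and is not the shortest route.
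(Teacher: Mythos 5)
Your proposal is correct and is essentially identical to the paper's proof: the paper likewise observes (following Lee) that the three identities are the $k=3$, $s=3,2,1$ cases of the Andrews--Gordon identity \eqref{AG}, with the same matching of the quadratic form $N_1^2+N_2^2=2i^2+2ij+j^2$ and the linear terms, and the modularity assertion is verified by the same routine application of Lemma \ref{lem-Robins}.
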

As noted in \cite[Example 3.4.3]{LeeThesis}, these identities are special cases of the Andrews--Gordon identity \cite{Andrews1974,Gordon1961}, which states that for integers $k,s$ such that $k\geq 2$ and $1\leq s \leq k$,
\begin{align}
\sum_{n_1,\cdots,n_{k-1}\geq 0} \frac{q^{N_1^2+\cdots+N_{k-1}^2+N_s+\cdots +N_{k-1}}}{(q;q)_{n_1}\cdots (q;q)_{n_{k-1}}}  =\frac{(q^s,q^{2k+1-s},q^{2k+1};q^{2k+1})_\infty}{(q;q)_\infty} \label{AG}
\end{align}
where if $j\leq k-1$, $N_j=n_j+\cdots+n_{k-1}$ and $N_k=0$. This is a generalization of the Rogers--Ramanujan identities.

If we set $k=3$ and $s=3,2,1$, we obtain \eqref{exam6-1}, \eqref{exam6-2} and \eqref{exam6-3}, respectively.

\subsection{Example 7.}\label{sec-exam7}
The modular triples for this example are given in Table \ref{tab-7}.
\begin{table}[H]
\centering
\begin{tabular}{c|ccc}
  \hline
    \padedvphantom{I}{3ex}{3ex}
  $A$ &  \multicolumn{3}{c}{$\begin{pmatrix} 1/2 & -1/2 \\ -1/2 & 1 \end{pmatrix}$}  \\
  \hline
    \padedvphantom{I}{3ex}{3ex}
  $B$ & $\begin{pmatrix} 0 \\ 0  \end{pmatrix}$ & $\begin{pmatrix}  1/2\\ -1/2 \end{pmatrix}$ & $\begin{pmatrix} 1/2 \\ 0 \end{pmatrix}$ \\
  ~~$C$ & $-5/84$ & $1/21$ & $1/84$ \\
  \hline
\end{tabular}
\\[2mm]
\caption{Modular triples for Example 7}\label{tab-7}
\end{table}


\begin{theorem}\label{thm-7}
We have
\begin{align}
\sum_{i,j\geq 0}\frac{q^{i^2+2j^2-2ij}}{(q^4;q^4)_i(q^4;q^4)_j}&=\frac{J_4^3J_{56}J_{6,28}}{J_2^2 J_8  J_{8,56}J_{24,56}}+2q\frac{J_8J_{56}J_{8,56}}{J_4^2J_{4,56}}, \label{exam7-1} \\
\sum_{i,j\geq 0}\frac{q^{i^2+2j^2-2ij+2i-2j}}{(q^4;q^4)_i(q^4;q^4)_j}&= 2\frac{J_8J_{56}J_{24,56}}{J_4^2J_{12,56}}+q\frac{J_4^3J_{28}J_{10,56}J_{18,56}}{J_2^2J_8J_{56}J_{16,56}J_{24,56}}, \label{exam7-2} \\
\sum_{i,j\geq 0}\frac{q^{i^2+2j^2-2ij+2i}}{(q^4;q^4)_i (q^4;q^4)_j}&=\frac{J_4^3J_{28}J_{2,56}J_{26,56}}{J_2^2J_8J_{56}J_{8,56}J_{16,56}}+2q^3\frac{J_8J_{56}J_{16,56}}{J_4^2J_{20,56}}. \label{exam7-3}
\end{align}
As a consequence, the Nahm sums $f_{A,B,C}(q^{84})$ for $(A,B,C)$  in Example 8 are modular functions on $\Gamma_1(7056)$.
\end{theorem}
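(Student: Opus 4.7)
My plan is to prove all three identities uniformly by summing over $j$ first using Euler's identity, then splitting the remaining single sum in $i$ by parity and matching each piece with an entry of Slater's list. To execute this, introduce the two-parameter family
$$F(u,v;q):=\sum_{i,j\geq 0}\frac{q^{i^2+2j^2-2ij}\,u^iv^j}{(q^4;q^4)_i(q^4;q^4)_j},$$
so that the left-hand sides of \eqref{exam7-1}, \eqref{exam7-2}, \eqref{exam7-3} are $F(1,1;q)$, $F(q^2,q^{-2};q)$ and $F(q^2,1;q)$, respectively. Rewriting $q^{2j^2}=q^{4\binom{j}{2}+2j}$ and applying the second identity in \eqref{Euler} (with $q\mapsto q^4$) to the inner sum produces
$$F(u,v;q)=\sum_{i\geq 0}\frac{q^{i^2}u^i}{(q^4;q^4)_i}\,\bigl(-q^{2-2i}v;q^4\bigr)_\infty.$$

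For $i\geq 1$ the argument of the resulting $(-\cdot;q^4)_\infty$ factor is a nonpositive power of $q$, so this single sum is not yet in Slater form. I would split the $i$-sum into $i=2s$ and $i=2s+1$ and use the elementary rearrangements
\begin{align*}
(-q^{2-4s};q^4)_\infty&=q^{-2s^2}(-q^2;q^4)_s\,(-q^2;q^4)_\infty,\\
(-q^{-4s};q^4)_\infty&=2\,q^{-2s^2-2s}(-q^4;q^4)_s\,(-q^4;q^4)_\infty,\\
(-q^{-4s-2};q^4)_\infty&=q^{-2(s+1)^2}(-q^2;q^4)_{s+1}\,(-q^2;q^4)_\infty,
\end{align*}
which follow by pulling the finitely many ``negative-power'' factors out of each Pochhammer symbol. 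The negative quadratic in $s$ produced by this step cancels the $q^{i^2}$ contribution from the outer sum and leaves a clean positive quadratic $2s^2+O(s)$, so that each $F(u,v;q)$ is presented as a linear combination of two Slater-type single sums in $s$, one with denominator $(q^4;q^4)_{2s}$ and one with $(q^4;q^4)_{2s+1}$, times a prefactor built from $(-q^2;q^4)_\infty$ or $(-q^4;q^4)_\infty$ and a $q$-power.

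The even-$i$ pieces, carrying $(-q^2;q^4)_s$, match S.\ 117 and S.\ 118 after $q\mapsto q^2$. For the odd-$i$ pieces carrying $(-q^4;q^4)_s$ I would use the simplification
$$\frac{(-q^4;q^4)_s}{(q^4;q^4)_{2s+1}}=\frac{1}{(q^4;q^4)_s(q^4;q^8)_{s+1}},$$
coming from $(q^4;q^4)_{2s+1}=(q^4;q^8)_{s+1}(q^8;q^8)_s$ and $(-q^4;q^4)_s=(q^8;q^8)_s/(q^4;q^4)_s$; these sums then reduce to S.\ 80 and S.\ 82 with $q\mapsto q^4$. The one remaining odd-$i$ sum in \eqref{exam7-2}, which carries $(-q^2;q^4)_{s+1}$, is S.\ 119 with $q\mapsto q^2$. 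Finally, the product side is converted into the stated form using $(-q^2;q^4)_\infty=J_4^2/(J_2J_8)$, $(-q^4;q^4)_\infty=J_8/J_4$, together with the modulus-bridging identity
$$J_{8,56}\,J_{16,56}\,J_{24,56}=J_8\,J_{56}^2,$$
which follows by partitioning $(q^8;q^8)_\infty$ into its residue classes modulo $7$, and the analogous relation $J_{a,28}\cdot J_{56}^2=J_{a,56}\,J_{28-a,56}\cdot J_{28}$ that shuttles between moduli $28$ and $56$.

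The main obstacle is not conceptual but combinatorial: the $q$-powers coming out of the parity split, and the many $J_{a,28}$ and $J_{a,56}$ factors produced by the Slater matches, must be tracked carefully so that the two explicit terms of each right-hand side of \eqref{exam7-1}--\eqref{exam7-3} emerge cleanly. Once the three identities are in hand, the assertion that $f_{A,B,C}(q^{84})$ is modular on $\Gamma_1(7056)$ is a routine application of the criterion discussed in Section \ref{sec-modularity}, and I would verify it using Frye and Garvan's Maple package \texttt{thetaids}, exactly as illustrated for Examples 2--4.
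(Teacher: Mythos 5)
Your proposal is correct and follows essentially the same route as the paper: the paper performs the parity split on $i$ first and then sums over $j$ via Euler's identity, whereas you sum over $j$ first and then split, but these steps commute and both land on the same six single sums, matched to S.\ 117, S.\ 80, S.\ 81, S.\ 119, S.\ 118 and S.\ 82. One bookkeeping slip in your accounting: the even-$i$ piece of \eqref{exam7-2} has inner factor $(-q^{-4s};q^4)_\infty$ and hence carries $(-q^4;q^4)_s$ rather than $(-q^2;q^4)_s$, so it is handled by S.\ 81 with $q\mapsto q^4$ (which you never name); your rearrangement of $(-q^{-4s};q^4)_\infty$ already covers it, so nothing breaks, but the list of Slater targets should include this sixth entry.
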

\begin{proof}
(1) We need to find a 2-dissection for the left side:
\begin{align}\label{exam7-1-dissection}
\sum_{i,j\geq 0}\frac{q^{i^2+2j^2-2ij}}{(q^4;q^4)_i(q^4;q^4)_j}=F_0(q^2)+qF_1(q^2), \quad F_0(q),F_1(q)\in \mathbb{Z}[\![q]\!].
\end{align}
Clearly, the parity of the exponent $i^2+2j^2-2ij$ is the same with $i$. Hence,
\begin{align}
F_0(q^2)=\sum_{i,j\geq 0}\frac{q^{(2i)^2+2j^2-2\cdot (2i)j}}{(q^4;q^4)_{2i}(q^4;q^4)_j}.
\end{align}
Therefore,
\begin{align}\label{exam7-1-F0}
F_0(q)&=\sum_{i,j\geq 0}\frac{q^{2i^2+j^2-2ij}}{(q^2;q^2)_{2i}(q^2;q^2)_j} =\sum_{i=0}^\infty \frac{q^{2i^2}}{(q^2;q^2)_{2i}}\sum_{j=0}^\infty \frac{q^{j^2-j}\cdot q^{(1-2i)j}}{(q^2;q^2)_j}  \nonumber \\
&=\sum_{i=0}^\infty \frac{q^{2i^2}}{(q^2;q^2)_{2i}}{(-q^{1-2i};q^2)_\infty} =(-q;q^2)_\infty \sum_{i=0}^\infty \frac{q^{i^2}(-q;q^2)_i}{(q^2;q^2)_{2i}}.
\end{align}
Substituting \eqref{Slater117} into \eqref{exam7-1-F0}, we obtain
\begin{align}\label{exam7-1-F0-result}
F_0(q)=\frac{J_2^3J_{28}J_{3,14}}{J_1^2J_4J_{4,28}J_{12,28}}.
\end{align}

Next, we have
\begin{align}
qF_1(q^2)=\sum_{i,j\geq 0}\frac{q^{(2i+1)^2+2j^2-2(2i+1)j}}{(q^4;q^4)_{2i+1}(q^4;q^4)_j}.
\end{align}
Thus
\begin{align}
F_1(q)&=\sum_{i,j\geq 0}\frac{q^{2i^2+2i+j^2-j-2ij}}{(q^2;q^2)_{2i+1}(q^2;q^2)_j}.
\end{align}
We have
\begin{align}\label{exam7-1-F1}
F_1(q^{\frac{1}{2}})&=\sum_{i,j\geq 0}\frac{q^{i^2+i}\cdot q^{(j^2-j)/2-ij}}{(q;q)_{2i+1}(q;q)_j} =\sum_{i=0}^\infty \frac{q^{i^2+i}}{(q;q)_{2i+1}}\sum_{j=0}^\infty \frac{q^{(j^2-j)/2}\cdot q^{-ij}}{(q;q)_j} \nonumber \\
&=\sum_{i=0}^\infty \frac{q^{i^2+i}}{(q;q)_{2i+1}}(-q^{-i};q)_\infty =2(-q;q)_\infty \sum_{i=0}^\infty \frac{q^{(i^2+i)/2}(-q;q)_i}{(q;q)_{2i+1}}.
\end{align}
Substituting \eqref{Slater80} into \eqref{exam7-1-F1}, we deduce that
\begin{align}\label{exam7-1-F1-result}
F_1(q^{\frac{1}{2}})=2\frac{J_2J_{14}J_{2,14}}{J_1^2J_{1,14}}.
\end{align}
Substituting \eqref{exam7-1-F0-result} and \eqref{exam7-1-F1-result} into \eqref{exam7-1-dissection}, we obtain \eqref{exam7-1}.

(2) Now we prove the second identity. Again, we need to make a 2-dissection:
\begin{align}\label{exam7-2-dissection}
\sum_{i,j\geq 0}\frac{q^{i^2+2j^2-2ij+2i-2j}}{(q^4;q^4)_i(q^4;q^4)_j}=G_0(q^2)+qG_1(q^2), \quad G_0(q),G_1(q)\in \mathbb{Z}[\![q]\!].
\end{align}
Note that the parity of the exponent $i^2+2j^2-2ij+2i-2j$  is the same with $i$. We deduce that
\begin{align}
G_0(q^2)&=\sum_{i,j\geq 0}\frac{q^{(2i^2)+2j^2-2\cdot(2i)\cdot j+4i-2j}}{(q^4;q^4)_{2i}(q^4;q^4)_j} =\sum_{i,j\geq 0}\frac{q^{4i^2+2j^2-4ij+4i-2j}}{(q^4;q^4)_{2i}(q^4;q^4)_j}.
\end{align}
We have
\begin{align}\label{exam7-2-G0}
G_0(q^{\frac{1}{2}})&=\sum_{i,j\geq 0}\frac{q^{i^2-ij+i+(j^2-j)/2}}{(q;q)_{2i}(q;q)_j} =\sum_{i=0}^\infty \frac{q^{i^2+i}}{(q;q)_{2i}}\sum_{j=0}^\infty \frac{q^{(j^2-j)/2}\cdot q^{-ij}}{(q;q)_j} \nonumber \\
&=\sum_{i=0}^\infty \frac{q^{i^2+i}}{(q;q)_{2i}}(-q^{-i};q)_\infty =2(-q;q)_\infty \sum_{i=0}^\infty \frac{q^{(i^2+i)/2}(-q;q)_i}{(q;q)_{2i}}.
\end{align}
Substituting \eqref{Slater81} into \eqref{exam7-2-G0}, we obtain
\begin{align}\label{exam7-2-G0-result}
G_0(q^{\frac{1}{2}})=2\frac{J_2J_{14}J_{6,14}}{J_1^2J_{3,14}}.
\end{align}
Similarly,
\begin{align}
qG_1(q^2)&=\sum_{i,j\geq 0}\frac{q^{(2i+1)^2+2j^2-2(2i+1)j+2(2i+1)-2j}}{(q^4;q^4)_{2i+1}(q^4;q^4)_j} =\sum_{i,j\geq 0}\frac{q^{4i^2-4ij+2j^2+8i-4j+3}}{(q^4;q^4)_{2i+1}(q^4;q^4)_j}.
\end{align}
Hence
\begin{align}\label{exam7-2-G1}
G_1(q)&=\sum_{i,j\geq 0}\frac{q^{2i^2+4i+1+j^2-2j-2ij}}{(q^2;q^2)_{2i+1}(q^2;q^2)_j} =\sum_{i=0}^\infty \frac{q^{2i^2+4i+1}}{(q^2;q^2)_{2i+1}}\sum_{j=0}^\infty \frac{q^{j^2-j}\cdot q^{-(2i+1)j}}{(q^2;q^2)_j} \nonumber \\
&=\sum_{i=0}^\infty \frac{q^{2i^2+4i+1}}{(q^2;q^2)_{2i+1}}(-q^{-2i-1};q^2)_\infty =(-q;q^2)_\infty \sum_{i=0}^\infty \frac{q^{i^2+2i}(-q;q^2)_{i+1}}{(q^2;q^2)_{2i+1}}.
\end{align}
Substituting \eqref{Slater119} into \eqref{exam7-2-G1}, we obtain
\begin{align}\label{exam7-2-G1-result}
G_1(q)=\frac{J_2^3J_{14}J_{5,28}J_{9,28}}{J_1^2J_4J_{28}J_{8,28}J_{12,28}}.
\end{align}
Now substituting \eqref{exam7-2-G0-result} and \eqref{exam7-2-G1-result} into \eqref{exam7-2-dissection}, we obtain \eqref{exam7-2}.

(3) It remains to prove \eqref{exam7-3}.  We make a 2-dissection:
\begin{align}\label{exam7-3-dissection}
\sum_{i,j\geq 0}\frac{q^{i^2+2j^2-2ij+2i}}{(q^4;q^4)_i (q^4;q^4)_j}=H_0(q^2)+qH_1(q^2).
\end{align}
Note that the parity of $i^2+2j^2-2ij+2i$ is the same with $i$. We have
\begin{align}
H_0(q^2)&=\sum_{i,j\geq 0}\frac{q^{4i^2+2j^2-4ij+4i}}{(q^4;q^4)_{2i}(q^4;q^4)_j},  \label{exam7-3-H0-defn}\\
qH_1(q^2)&=\sum_{i,j\geq 0}\frac{q^{4i^2+8i+3+2j^2-4ij-2j}}{(q^4,q^4)_{2i+1}(q^4;q^4)_j}. \label{exam7-3-H1-defn}
\end{align}
From \eqref{exam7-3-H0-defn} we have
\begin{align}\label{exam7-3-H0}
H_0(q)&=\sum_{i,j\geq 0}\frac{q^{2i^2+j^2-2ij+2i}}{(q^2;q^2)_{2i}(q^2;q^2)_j} =\sum_{i=0}^\infty \frac{q^{2i^2+2i}}{(q^2;q^2)_{2i}}\sum_{j=0}^\infty \frac{q^{j^2-j}\cdot q^{(1-2i)j}}{(q^2;q^2)_j} \nonumber \\
&=\sum_{i=0}^\infty \frac{q^{2i^2+2i}}{(q^2;q^2)_{2i}} (-q^{1-2i};q^2)_\infty =(-q;q^2)_\infty \sum_{i=0}^\infty \frac{q^{i^2+2i}(-q;q^2)_i}{(q^2;q^2)_{2i}}.
\end{align}
Substituting \eqref{Slater118} into \eqref{exam7-3-H0}, we obtain
\begin{align}\label{exam7-3-H0-result}
H_0(q)=\frac{J_2^3J_{14}J_{1,28}J_{13,28}}{J_1^2J_4J_{28}J_{4,28}J_{8,28}}.
\end{align}
From \eqref{exam7-3-H1-defn} we have
\begin{align}\label{exam7-3-H1}
H_1(q^{\frac{1}{2}})&=q^{\frac{1}{2}}\sum_{i,j\geq 0}\frac{q^{i^2+2i-ij+(j^2-j)/2}}{(q;q)_{2i+1}(q;q)_j} =q^{\frac{1}{2}}\sum_{i=0}^\infty \frac{q^{i^2+2i}}{(q;q)_{2i+1}} \sum_{j=0}^\infty \frac{q^{(j^2-j)/2}\cdot q^{-ij}}{(q;q)_j} \nonumber \\
&=q^{\frac{1}{2}}\sum_{i=0}^\infty \frac{q^{i^2+2i}}{(q;q)_{2i+1}}(-q^{-i};q)_\infty =2q^{\frac{1}{2}}(-q;q)_\infty \sum_{i=0}^\infty \frac{q^{(i^2+3i)/2}(-q;q)_i}{(q;q)_{2i+1}}.
\end{align}
Substituting \eqref{Slater82} into \eqref{exam7-3-H1}, we obtain
\begin{align}\label{exam7-3-H1-result}
H_1(q^{\frac{1}{2}})&=2q^{\frac{1}{2}}\frac{J_2J_{14}J_{4,14}}{J_1^2J_{5,14}}.
\end{align}
Substituting \eqref{exam7-3-H0-result} and \eqref{exam7-3-H1-result} into \eqref{exam7-3-dissection}, we obtain \eqref{exam7-3}.

From the identity \eqref{exam7-1} we have
\begin{align}\label{f-g1g2}
f_{A,(0,0)^\mathrm{T},-5/84}(q^4)=g_1(\tau)+2g_2(\tau)
\end{align}
where
\begin{align}
g_1(\tau)&=\frac{\eta_{56,4}(\tau)\eta_{56,12}(\tau)\eta_{56,20}(\tau)\eta_{56,28}(\tau)}
{\eta_{56,2}^2(\tau)\eta_{56,6}(\tau)\eta_{56,8}(\tau)\eta_{56,10}^2(\tau)\eta_{56,14}^2(\tau)\eta_{56,18}^2(\tau)\eta_{56,22}(\tau)\eta_{56,24}(\tau)\eta_{56,26}^2(\tau)}, \\
g_2(\tau)&=\frac{1}{\eta_{56,4}^3(\tau)\eta_{56,12}^2(\tau)\eta_{56,16}(\tau)\eta_{56,20}^2(\tau)\eta_{56,24}(\tau)\eta_{56,28}(\tau)}.
\end{align}
Now using the method in \cite{Garvan-Liang} (see Section \ref{sec-modularity}), we check by Maple that both $g_1(21\tau)$ and $g_2(21\tau)$ are modular functions on $\Gamma_1(7056)$. For instance, the Maple code for checking the modularity of $g_2(21\tau)$ is as follows:\\
\begin{tcolorbox}
{$ \displaystyle \texttt{>\,}  \mathit{L}:= [[56\cdot 21, 4\cdot 21, -3], [56\cdot 21, 12\cdot 21, -2], [56\cdot 21, 16\cdot 21, -1], [56\cdot  21, 20\cdot 21, -2], [56\cdot 21, 24\cdot 21, -1], [56\cdot 21, 28\cdot 21, -1]] $}\\
{$ \displaystyle \texttt{>\,}  \mathit{xprint:= true:}$}\\
{$\displaystyle \texttt{>\,} \mathit{Gamma1ModFunc}(\mathit{L}, 7056)$}
\begin{verbatim}
* starting Gamma1ModFunc with L=[[1176, 84, -2], [1176, 168, -1],
[1176, 252, -2], [1176, 420, -3], [1176, 504, -1],
[1176, 588, -1]] and N=7056
All n are divisors of 7056
val0=-10
which is even.
valinf=128
which is even.
It IS a modfunc on Gamma1(7056)
\end{verbatim}
$$1$$
\end{tcolorbox}
From \eqref{f-g1g2} we know that $f_{A,(0,0)^\mathrm{T},-5/84}(q^{84})$ is a modular function on $\Gamma_1(7056)$.

Similarly, we have checked that, after multiplication by $q^{4/21}$ (resp.\ $q^{1/21}$) and then replacing $q$ by $q^{21}$,  the two infinite products in \eqref{exam7-2} (resp.\ \eqref{exam7-3}) are  modular functions on $\Gamma_1(7056)$. This proves the desired assertion.
\end{proof}


\subsection{Example 8}\label{sec-exam8}
The modular triples for this example are given in Table \ref{tab-8} \footnote{Here the last value of $C$ has been corrected. In \cite[Table 2]{Zagier} it was written as $25/168$.}.
\begin{table}[H]
\centering
\begin{tabular}{c|ccc}
  \hline
    \padedvphantom{I}{3ex}{3ex}
  $A$ &  \multicolumn{3}{c}{$\begin{pmatrix} 3/2 & 1 \\ 1 & 2 \end{pmatrix}$}   \\
  \hline
    \padedvphantom{I}{3ex}{3ex}
  $B$ & $\begin{pmatrix} -1/2 \\ 0  \end{pmatrix}$ & $\begin{pmatrix} 0 \\0  \end{pmatrix}$ & $\begin{pmatrix} 1/2 \\ 1  \end{pmatrix}$ \\
  ~~$C$ & $1/168$ & $-5/168$ & $5/21$\\
  \hline
\end{tabular}
\\[2mm]
\caption{Modular triples for Example 8}\label{tab-8}
\end{table}


\begin{theorem}\label{thm-8}
We have
\begin{align}
\sum_{i,j\geq 0}\frac{q^{3i^2+4j^2+4ij-2i}}{(q^4;q^4)_i (q^4;q^4)_j}&=\sum_{i,j\geq 0}\frac{q^{i^2+4ij+4j^2+4j}}{(q^4;q^4)_i(q^8;q^8)_j}=\frac{J_{14}J_{28}^2J_{2,28}}{J_{1,28}J_{4,28}J_{8,28}J_{13,28}}, \label{exam8-1} \\
\sum_{i,j\geq 0}\frac{q^{3i^2+4j^2+4ij}}{(q^4;q^4)_i (q^4;q^4)_j}&=\sum_{i,j\geq 0}\frac{q^{i^2+4ij+4j^2+2i}}{(q^4;q^4)_i(q^8;q^8)_j}=\frac{J_{14}J_{28}^2J_{6,28}}{J_{3,28}J_{4,28}J_{11,28}J_{12,28}},  \label{exam8-2} \\
\sum_{i,j\geq 0}\frac{q^{3i^2+4j^2+4ij+2i+4j}}{(q^4;q^4)_i (q^4;q^4)_j}&=\sum_{i,j\geq 0}\frac{q^{i^2+4ij+4j^2+4i+4j}}{(q^4;q^4)_i(q^8;q^8)_j}=\frac{J_{14}J_{28}^2J_{10,28}}{J_{5,28}J_{8,28}J_{9,28}J_{12,28}}.  \label{exam8-3}
\end{align}
As a consequence, for $A$ in Example 8, the Nahm sums $f_{A,(-1/2,0)^\mathrm{T},1/168}(q^{168})$ and $f_{A,(0,0)^\mathrm{T},-5/168}(q^{168})$ are modular functions on $\Gamma_1(28224)$, and $f_{A,(1/2,1)^\mathrm{T},5/21}(q^{84})$  is a modular function on $\Gamma_1(14112)$.
\end{theorem}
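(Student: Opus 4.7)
My plan for Theorem \ref{thm-8} follows the author's hint that each proof will use the integral method. The strategy splits each identity into two stages: (I) prove the equality of the two double-sum forms, and (II) identify their common value with the infinite product on the right.

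For Stage (I), Euler's identity \eqref{Euler} quickly collapses the second double sum. In \eqref{exam8-1}, for example, the $j$-exponent can be rewritten as $8\binom{j}{2}+(4i+8)j$; since the $j$-denominator is $(q^8;q^8)_j$, summing over $j$ yields $(-q^{4i+8};q^8)_\infty$. The same step gives $(-q^{4i+4};q^8)_\infty$ in \eqref{exam8-2} and $(-q^{4i+12};q^8)_\infty$ in \eqref{exam8-3}. To match the first form (where the $j$-denominator is $(q^4;q^4)_j$), I would use the factorization $(q^4;q^4)_j=(q^4;q^8)_j(q^8;q^8)_j$ together with a Heine-type manipulation; alternatively, one may sum the first double sum over $i$ first by exploiting the decomposition $3i^2-2i=i^2+4\binom{i}{2}$, so that the $4\binom{i}{2}$ piece combines cleanly with $(q^4;q^4)_i$ via Euler, producing a single sum in $j$ that can be shown to equal the one coming from the second form.

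For Stage (II), I will use the integral method. Representing the surviving $q^{i^2}$ factor by the Gauss form of Jacobi's identity \eqref{Jacobi},
\[
\sum_{n\in\mathbb{Z}}z^n q^{n^2}=(q^2,-zq,-q/z;q^2)_\infty,
\]
converts the single sum into a contour integral of infinite products. Two evaluations are then presented, as announced. The first proof computes the integral by a direct residue expansion, reducing it to a single series that matches, after the substitution $q\mapsto q^2$, one of the mod-$28$ Slater identities \eqref{Slater117}--\eqref{Slater119}; this produces the claimed product. The second proof applies Lemma \ref{lem-integral} with a tailored parameter set $(a_i,b_i,c_i,d_i)$, verifies the convergence condition \eqref{cond}, and identifies the resulting sum with a Slater identity by following the template of the author's earlier work \cite{Wang}.

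I expect the main obstacle to be the choice of parameters in Lemma \ref{lem-integral}: condition \eqref{cond} must hold, and the right-hand side of \eqref{eq-integral} must be recognizable as a known Slater-type sum; each of the three identities will need its own substitution dictated by the linear term in $B$. A secondary obstacle is bookkeeping on the base of the intermediate products: after the Euler step and the parity split of $i$, the most natural products live in base $q^{56}$ or $q^{112}$, and a final step will be to verify, via standard theta-product rearrangements, that they simplify to the $J_{28}$ factors appearing on the right of the theorem.
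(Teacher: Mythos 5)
Your toolkit (Euler, Jacobi, the integral method, reduction to mod-28 Slater identities, two proofs) matches the paper's, but there is a genuine gap at your Stage (I), which is in fact the crux of this example. Your ``alternative'' route --- summing the first double sum over $i$ after writing $3i^2-2i=i^2+4\binom{i}{2}$ --- fails: after extracting $q^{4\binom{i}{2}}$ the $i$-sum still carries $q^{i^2+4ij}$, whose residual quadratic $i^2$ is not a multiple of the base exponent $4$, so neither of Euler's identities \eqref{Euler} applies; the same obstruction blocks summing over $j$ first in the first form, since there the $j$-denominator is $(q^4;q^4)_j$ while the quadratic is $4j^2$. This is precisely why the paper singles out Example 8 as the one where no variable can be eliminated directly. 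The device the paper actually uses, and which your sketch omits, is the completion of the square $3i^2+4ij+4j^2=2i^2+(i+2j)^2$: the factor $q^{(i+2j)^2}$ is written as a contour integral against $\sum_k q^{k^2}z^{-k}$, the $i$- and $j$-sums then decouple and are summed by Euler inside the integral, the integrand is simplified via $(z^2;q^8)_\infty=(z;q^4)_\infty(-z;q^4)_\infty$ (this is where the particular choices of $u,v$ produce the cancellation), and the result is re-expanded by Euler and Jacobi into the second double-sum form. Your ``Heine-type manipulation'' is too vague to substitute for this chain.

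Your Stage (II) is closer to workable. The paper's first proof does not evaluate the integral by residues: it takes the second double sum, sums over $j$ by Euler exactly as you describe, splits the remaining $i$-sum by parity, and applies \eqref{Slater117}--\eqref{Slater119} and \eqref{Slater80}--\eqref{Slater82} to the even and odd parts separately; the second proof instead applies Lemma \ref{lem-integral} to the integral representation and lands on other Slater identities, as you anticipate. One computational slip to fix: in \eqref{exam8-3} the linear $j$-term is $4j$, just as in \eqref{exam8-1}, so the Euler collapse yields $(-q^{4i+8};q^8)_\infty$, not $(-q^{4i+12};q^8)_\infty$; the three cases are distinguished by the $i$-linear terms $q^{i^2}$, $q^{i^2+2i}$, $q^{i^2+4i}$ surviving in front.
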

Here the second double sum expression in each identity was found by the author in the proof. This example is more technical than the others since we cannot eliminate the variables $i$ or $j$ from the left side in an easy way. To achieve this, we need to use an integral method mentioned in Section \ref{sec-pre} to give the new double sum representations stated above.

\begin{proof}[Proof of Theorem \ref{thm-8}]
We define
\begin{align}
F(u,v;q):=\sum_{i,j\geq 0}\frac{q^{2i^2+(i+2j)^2}u^iv^j}{(q^4;q^4)_i(q^4;q^4)_j}.
\end{align}
By \eqref{Euler} and \eqref{Jacobi}, we have
\begin{align}\label{exam8-1-F-defn}
F(u,v;q)&=\oint \sum_{i=0}^\infty \frac{q^{2i^2}z^iu^i}{(q^4;q^4)_i} \sum_{j=0}^\infty \frac{z^{2j}v^j}{(q^4;q^4)_j} \sum_{k=-\infty}^\infty q^{k^2}z^{-k} \frac{\mathrm{d}z}{2\pi iz} \nonumber \\
&=\oint \frac{(-uzq^2;q^4)_\infty (-qz,-q/z,q^2;q^2)_\infty}{(vz^2;q^4)_\infty} \frac{\mathrm{d}z}{2\pi iz}.
\end{align}
Now we discuss the identities one by one.

(1) Setting $(u,v)=(q^{-2},1)$ in \eqref{exam8-1-F-defn}, we have
\begin{align}
F(q^{-2},1;q)&=\oint \frac{(-z;q^4)_\infty (-qz,-q/z,q^2;q^2)_\infty}{(z^2;q^4)_\infty} \frac{\mathrm{d}z}{2\pi iz} \nonumber \\
&=\oint \frac{(-z;q^4)_\infty(-qz,-q/z,q^2;q^2)_\infty}{(z^2;q^8)_\infty (z^2q^4;q^8)_\infty} \frac{\mathrm{d}z}{2\pi iz} \nonumber \\
&=\oint \frac{(-qz,-q/z,q^2;q^2)_\infty}{(z;q^4)_\infty (z^2q^4;q^8)_\infty} \frac{\mathrm{d}z}{2\pi iz}. \label{exam8-new-1}
\end{align}
Applying \eqref{Euler} and \eqref{Jacobi} to \eqref{exam8-new-1}, we deduce that
\begin{align}
F(q^{-2},1;q)&=\oint \sum_{i=0}^\infty \frac{z^i}{(q^4;q^4)_i} \sum_{j=0}^\infty \frac{z^{2j}q^{4j}}{(q^8;q^8)_j} \sum_{k=-\infty}^\infty q^{k^2}z^{-k} \frac{\mathrm{d}z}{2\pi iz} \nonumber \\
&=\sum_{i,j\geq 0}\frac{q^{(i+2j)^2+4j}}{(q^4;q^4)_i(q^8;q^8)_j}.
\end{align}
Now we have
\begin{align}
&\sum_{i,j\geq 0}\frac{q^{(i+2j)^2+4j}}{(q^4;q^4)_i(q^8;q^8)_j}=\sum_{i\geq 0} \frac{q^{i^2}}{(q^4;q^4)_i} \sum_{j\geq 0} \frac{q^{4j^2-4j} \cdot q^{(4i+8)j}}{(q^8;q^8)_j} \nonumber \\
&=\sum_{i=0}^\infty \frac{q^{i^2}(-q^{4i+8};q^8)_\infty}{(q^4;q^4)_i}=R_0(q)+R_1(q). \label{exam8-R0R1}
\end{align}
Here $R_0(q)$ and $R_1(q)$ correspond to sums over even and odd values of $i$, respectively. To be precise,
\begin{align}
R_0(q)&=\sum_{i=0}^\infty \frac{q^{4i^2}(-q^{8i+8};q^8)_\infty}{(q^8;q^8)_{2i}}, \label{exam8-R0}\\
R_1(q)&=\sum_{i=0}^\infty \frac{q^{4i^2+4i+1}(-q^{8i+12};q^8)_\infty}{(q^4;q^4)_{2i+1}}. \label{exam8-R1}
\end{align}
We have
\begin{align}
R_0(q^{\frac{1}{4}})&=\sum_{i=0}^\infty \frac{q^{i^2}(-q^{2i+2};q^2)_\infty}{(q;q)_{2i}} =(-q^2;q^2)_\infty \sum_{i=0}^\infty \frac{q^{i^2}}{(q;q)_{2i}(-q^2;q^2)_i} \nonumber \\
&=(-q^2;q^2)_\infty \sum_{i=0}^\infty \frac{q^{i^2}(-q;q^2)_i}{(q^2;q^2)_{2i}}=\frac{J_{14}J_{3,28}J_{8,28}J_{11,28}}{J_1J_{28}^3}. \label{exam8-1-R0-result}
\end{align}
Here for the last equality we used \eqref{Slater117}.

Similarly,
\begin{align}
R_1(q^{\frac{1}{4}})&= q^{\frac{1}{4}}\sum_{i=0}^\infty  \frac{q^{i^2+i}(-q^{2i+3};q^2)_\infty} {(q;q)_{2i+1}}=q^{\frac{1}{4}}(-q;q^2)_\infty
\sum_{i=0}^\infty \frac{q^{i^2+i}}{(q;q)_{2i+1}(-q;q^2)_{i+1}}\nonumber\\
&=q^{\frac{1}{4}}(-q;q^2)_\infty \sum_{i=0}^\infty \frac{q^{i^2+i}(-q^2;q^2)_i}{(q^2;q^2)_{2i+1}} =q^{\frac{1}{4}}\frac{J_{14}J_{4,28}J_{6,28}J_{10,28}}{J_1J_{28}^3}. \label{exam8-1-R1-result}
\end{align}
Here for the last equality we used \eqref{Slater80} with $q$ replaced by $q^2$.

Substituting \eqref{exam8-1-R0-result} and \eqref{exam8-1-R1-result} into \eqref{exam8-R0R1}, we get the second equality in \eqref{exam8-1} after simple verification using the method in \cite{Garvan-Liang}.

(2) Setting $(u,v)=(1,1)$ in \eqref{exam8-1-F-defn}, we deduce that
\begin{align}
F(1,1;q)&=\oint \frac{(-q^2z;q^4)_\infty (-qz,-q/z,q^2;q^2)_\infty}{(z^2;q^4)_\infty} \frac{\mathrm{d}z}{2\pi iz} \nonumber \\
&=\oint \frac{(-q^2z;q^4)_\infty (-qz,-q/z,q^2;q^2)_\infty}{(q^4z^2,z^2;q^8)_\infty} \frac{\mathrm{d}z}{2\pi iz} \nonumber \\
&=\oint \frac{(-qz,-q/z,q^2;q^2)_\infty}{(q^2z;q^4)_\infty (z^2;q^8)_\infty} \frac{\mathrm{d}z}{2\pi iz}. \label{exam8-2-new}
\end{align}
Using \eqref{Euler} and \eqref{Jacobi} and arguing similarly as in (1), we get the first equality in \eqref{exam8-2}.

Next, we have
\begin{align}
&\sum_{i,j\geq 0}\frac{q^{i^2+4ij+4j^2+2i}}{(q^4;q^4)_i(q^8;q^8)_j}=\sum_{i\geq 0}\frac{q^{i^2+2i}}{(q^4;q^4)_i} \sum_{j\geq 0} \frac{q^{4j^2-4j}\cdot q^{(4i+4)j}}{(q^8;q^8)_j} \nonumber \\
&=\sum_{i\geq 0} \frac{q^{i^2+2i}(-q^{4i+4};q^8)_\infty}{(q^4;q^4)_i}=S_0(q)+S_1(q). \label{exam8-2-S0S1}
\end{align}
Here $S_0(q)$ and $S_1(q)$ correspond to sums over even and odd values of $i$, respectively. To be precise,
\begin{align}
S_0(q)&=\sum_{i=0}^\infty \frac{q^{4i^2+4i}(-q^{8i+4};q^8)_\infty}{(q^4;q^4)_{2i}}, \\
S_1(q)&=\sum_{i=0}^\infty \frac{q^{4i^2+8i+3}(-q^{8i+8};q^8)_\infty}{(q^4;q^4)_{2i+1}}.
\end{align}
We have
\begin{align}
S_0(q^{\frac{1}{4}})&=\sum_{i=0}^\infty \frac{q^{i^2+i}(-q^{2i+1};q^2)_\infty}{(q;q)_{2i}}=(-q;q^2)_\infty \sum_{i=0}^\infty \frac{q^{i^2+i}}{(q;q)_{2i} (-q;q^2)_i} \nonumber \\
&=(-q;q^2)_\infty\sum_{i=0}^\infty \frac{q^{i^2+i}(-q^2;q^2)_i}{(q^2;q^2)_{2i}} =\frac{J_{14}J_{2,28}J_{10,28}J_{12,28}}{J_1J_{28}^3}. \label{exam8-2-S0-result}
\end{align}
Here for the last equality we used \eqref{Slater81} with $q$ replaced by $q^2$.

Similarly,
\begin{align}
S_1(q^{\frac{1}{4}})&=q^{\frac{3}{4}}\sum_{i=0}^\infty \frac{q^{i^2+2i}(-q^{2i+2};q^2)_\infty}{(q;q)_{2i+1}}=q^{\frac{3}{4}}(-q^2;q^2)_\infty \sum_{i=0}^\infty \frac{q^{i^2+2i}}{(q;q)_{2i+1}(-q^2;q^2)_i} \nonumber \\
&=q^{\frac{3}{4}}(-q^2;q^2)_\infty \sum_{i=0}^\infty \frac{q^{i^2+2i}(-q;q^2)_{i+1}}{(q^2;q^2)_{2i+1}}=q^{\frac{3}{4}}\frac{J_{14}J_{4,28}J_{5,28}J_{9,28}}{J_1J_{28}^3}. \label{exam8-2-S1-result}
\end{align}
Here for the last equality we used \eqref{Slater119}.

Substituting \eqref{exam8-2-S0-result} and \eqref{exam8-2-S1-result} into \eqref{exam8-2-S0S1}, we obtain the second equality in \eqref{exam8-2} after simple verifications using the method in \cite{Garvan-Liang}.

(3) Setting $(u,v)=(q^2,q^4)$ in \eqref{exam8-1-F-defn}, we have
\begin{align}
F(q^2,q^4;q)&=\oint \frac{(-q^4z;q^4)_\infty (-qz,-q/z,q^2;q^2)_\infty}{(q^4z^2;q^4)_\infty}\frac{\mathrm{d}z}{2\pi iz} \nonumber \\
&=\oint \frac{(-q^4z;q^4)_\infty(-qz,-q/z,q^2;q^2)_\infty}{(q^8z^2;q^8)_\infty (q^4z^2;q^8)_\infty} \frac{\mathrm{d}z}{2\pi iz} \nonumber \\
&=\oint \frac{(-qz,-q/z,q^2;q^2)_\infty}{(q^4z;q^4)_\infty (q^4z^2;q^8)_\infty} \frac{\mathrm{d}z}{2\pi iz}. \label{exam8-3-new}
\end{align}
Applying \eqref{Euler} and \eqref{Jacobi} to \eqref{exam8-3-new} and arguing similarly as in (1), we obtain the first equality in \eqref{exam8-3}.

Next, we have
\begin{align}
&\sum_{i,j\geq 0}\frac{q^{i^2+4ij+4j^2+4i+4j}}{(q^4;q^4)_i(q^8;q^8)_j}=\sum_{i\geq 0}\frac{q^{i^2+4i}}{(q^4;q^4)_i}\sum_{j\geq 0}\frac{q^{4j^2-4j}\cdot q^{(4i+8)j}}{(q^8;q^8)_j} \nonumber \\
&=\sum_{i\geq 0}\frac{q^{i^2+4i}(-q^{4i+8};q^8)_\infty}{(q^4;q^4)_i}=T_0(q)+T_1(q). \label{exam8-3-T0T1}
\end{align}
Here $T_0(q)$ and $T_1(q)$ correspond to sums over even and odd values of $i$, respectively. To be precise,
\begin{align}
T_0(q)&=\sum_{i=0}^\infty \frac{q^{4i^2+8i}(-q^{8i+8};q^8)_\infty}{(q^4;q^4)_{2i}},  \\
T_1(q)&=\sum_{i=0}^\infty \frac{q^{4i^2+12i+5}(-q^{8i+12};q^8)_\infty}{(q^4;q^4)_{2i+1}}.
\end{align}
We have
\begin{align}
T_0(q^{\frac{1}{4}})&=\sum_{i=0}^\infty \frac{q^{i^2+2i}(-q^{2i+2};q^2)_\infty}{(q;q)_{2i}} =(-q^2;q^2)_\infty \sum_{i=0}^\infty \frac{q^{i^2+2i}}{(q;q)_{2i}(-q^2;q^2)_i} \nonumber \\
&=(-q^2;q^2)_\infty \sum_{i=0}^\infty \frac{q^{i^2+2i}(-q;q^2)_i}{(q^2;q^2)_{2i}} =\frac{J_{14}J_{1,28}J_{12,28}J_{13,28}}{J_1J_{28}^3}. \label{exam8-3-T0-result}
\end{align}
Here for the last equality we used \eqref{Slater118}.

Similarly,
\begin{align}
T_1(q^{\frac{1}{4}})&=q^{\frac{5}{4}}\sum_{i=0}^\infty \frac{q^{i^2+3i}(-q^{2i+3};q^2)_\infty}{(q;q)_{2i+1}}=q^{\frac{5}{4}}(-q;q^2)_\infty \sum_{i=0}^\infty \frac{q^{i^2+3i}}{(q;q)_{2i+1}(-q;q^2)_{i+1}} \nonumber \\
&=q^{\frac{5}{4}}(-q;q^2)_\infty \sum_{i=0}^\infty \frac{q^{i^2+3i}(-q^2;q^2)_i}{(q^2;q^2)_{2i+1}}=q^{\frac{5}{4}}\frac{J_{14}J_{2,28}J_{6,28}J_{8,28}}{J_1J_{28}^3}. \label{exam8-3-T1-result}
\end{align}
Substituting \eqref{exam8-3-T0-result} and \eqref{exam8-3-T1-result} into \eqref{exam8-3-T0T1}, we obtain the second equality in \eqref{exam8-3} after simple verifications using the method in \cite{Garvan-Liang}.
\end{proof}

We now present a different proof for the product representations in Theorem \ref{thm-8} without using the second double sum expression in each identity. This is actually the first proof we found when doing this project. The computations in this proof are a bit more complicated, but it tells us that the theorem can also be reduced to other identities in Slater's list. Here we follow the techniques in the author's work \cite{Wang}.

\begin{proof}[Second proof of the product representations in Theorem \ref{thm-8}]
(1) We rewrite \eqref{exam8-new-1} as
\begin{align}
F(q^{-2},1;q)=(q^2;q^4)_\infty \oint \frac{(-qz,-q^3z,-q/z,-q^3/z,q^4;q^4)_\infty}{(z,q^2z,-q^2z;q^4)_\infty} \frac{\mathrm{d}z}{2\pi iz}. \label{exam8-proof-1}
\end{align}
To prove the second equality in \eqref{exam8-1}, we use \eqref{exam8-proof-1}. Applying Lemma \ref{lem-integral} with $q$ replaced by $q^4$ and
$$(A,B,C,D)=(2,2,3,0), \quad (a_1,a_2)=(b_1,b_2)=(-q,-q^3), \quad (c_1,c_2,c_3)=(1,q^2,-q^2),$$
we deduce that
\begin{align}\label{exam8-1-F}
F(q^{-2},1;q)=(q^2;q^4)_\infty \left(R_1(q)+R_2(q)+R_3(q)\right),
\end{align}
where
\begin{align}
R_1(q)&:=\frac{(-q,-q,-q^3,-q^3;q^4)_\infty}{(q^2,-q^2;q^4)_\infty} \sum_{n=0}^\infty \frac{q^{2n^2+2n}(-q^3,-q;q^4)_n}{(q^4,-q,-q^3,q^2,-q^2;q^4)_n} \nonumber \\
&=\frac{(-q;q^2)_\infty^2}{(q^4;q^8)_\infty} \sum_{n=0}^\infty \frac{q^{2n^2+2n}}{(q^4;q^4)_n (q^4;q^8)_n}, \label{new-exam8-1-R1} \\
R_2(q)&:=\frac{(-q^3,-q^5,-q^{-1},-q;q^4)_\infty}{(q^{-2},-1;q^4)_\infty} \sum_{n=0}^\infty \frac{q^{2n^2+4n}(-q^5,-q^3;q^4)_n}{(q^4,-q^3,-q^5,q^6,-q^4;q^4)_n} \nonumber \\
&=-\frac{1}{2}q\frac{(-q;q^2)_\infty^2}{(q^2;q^4)_\infty (-q^4;q^4)_\infty} \sum_{n=0}^\infty \frac{q^{2n^2+4n}}{(q^2;q^4)_{n+1}(q^8;q^8)_n}, \label{new-exam8-1-R2} \\
R_3(q)&:=\frac{(q^3,q^5,q^{-1},q;q^4)_\infty}{(-1,-q^{-2};q^4)_\infty} \sum_{n=0}^\infty \frac{(-1)^nq^{2n^2+4n}(q^5,q^3;q^4)_n}{(q^4,q^3,q^5,-q^4,-q^6;q^4)_n} \nonumber \\
&=-\frac{1}{2}q \frac{(q;q^2)_\infty^2}{(-q^2;q^2)_\infty} \sum_{n=0}^\infty \frac{(-1)^nq^{2n^2+4n}}{(-q^2;q^4)_{n+1}(q^8;q^8)_n}. \label{new-exam8-1-R3}
\end{align}
Substituting the identity \eqref{Slater81} with $q$ replaced by $q^4$ into \eqref{new-exam8-1-R1}, we deduce that
\begin{align}
R_1(q)=\frac{J_2^4J_8J_{56}J_{24,56}}{J_1^2J_4^4J_{12,56}}.  \label{new-exam8-1-R1-result}
\end{align}
Substituting the identity  \eqref{Slater119}  with $q$ replaced by $q^2$ (resp.\ $-q^2$)  into \eqref{new-exam8-1-R2} (resp.\ \eqref{new-exam8-1-R3}), we deduce that
\begin{align}
R_2(q)&=-\frac{1}{2}q\frac{J_2^2J_4J_{8,56}J_{10,28}}{J_1^2J_8^2J_{56}}, \label{new-exam8-1-R2-result}  \\ R_3(q)&=-\frac{1}{2}q\frac{J_1^2J_{14}J_{2,28}J_{6,28}}{J_2J_4J_{28}J_{4,28}J_{12,28}}. \label{new-exam8-1-R3-result}
\end{align}
Now substituting \eqref{new-exam8-1-R1-result}--\eqref{new-exam8-1-R3-result} into \eqref{exam8-1-F}, using the method in \cite{Garvan-Liang}, it is easy to verify that the second equality of \eqref{exam8-1} holds.

(2) From \eqref{exam8-2-new} we have
\begin{align}
F(1,1;q)=(q^2;q^4)_\infty \oint \frac{(-qz,-q^3z,-q/z,-q^3/z,q^4;q^4)_\infty}{(z,-z,q^2z;q^4)_\infty} \frac{\mathrm{d}z}{2\pi iz}.  \label{exam8-2-F-start}
\end{align}
Applying Lemma \ref{lem-integral} with $q$ replaced by $q^4$ and
\begin{align*}
(A,B,C,D)=(2,2,3,0), \quad (a_1,a_2)=(b_1,b_2)=(-q,-q^3), \quad (c_1,c_2,c_3)=(1,-1,q^2),
\end{align*}
we deduce that
\begin{align}\label{exam8-2-F}
F(1,1;q)=(q^2;q^4)_\infty \left(S_1(q)+S_2(q)+S_3(q)\right),
\end{align}
where
\begin{align}
S_1(q)&=\frac{(-q,-q,-q^3,-q^3;q^4)_\infty}{(-1,q^2;q^4)_\infty} \sum_{n=0}^\infty \frac{q^{2n^2+4n}(-q^3,-q;q^4)_n}{(q^4,-q^4,q^2,-q,-q^3;q^4)_n}  \nonumber \\
&=\frac{1}{2}\frac{(-q;q^2)_\infty^2}{(q^2,-q^4;q^4)_\infty} \sum_{n=0}^\infty \frac{q^{2n^2+4n}}{(q^2;q^4)_n(q^8;q^8)_n},  \label{new-exam8-2-S1}\\
S_2(q)&=\frac{(q,q^3;q^4)_\infty^2}{(-1,-q^2;q^4)_\infty} \sum_{n=0}^\infty \frac{(-1)^nq^{2n^2+4n}(q,q^3;q^4)_n}{(q^4,q,q^3,-q^4,-q^2;q^4)_n} \nonumber \\
&=\frac{1}{2}\frac{(q;q^2)_\infty^2}{(-q^2;q^2)_\infty} \sum_{n=0}^\infty \frac{(-1)^nq^{2n^2+4n}}{(-q^2;q^4)_n(q^8;q^8)_n}, \label{new-exam8-2-S2} \\
S_3(q)&=\frac{(-q^3,-q^5,-q^{-1},-q;q^4)_\infty}{(-q^{-2},q^{-2};q^4)_\infty} \sum_{n=0}^\infty \frac{(-q^5,-q^3;q^4)_nq^{2n^2+6n}}{(q^4,-q^3,-q^5,-q^6,q^6;q^4)_n} \nonumber \\
&=-q^3\frac{(-q;q^2)_\infty^2}{(q^4;q^8)_\infty} \sum_{n=0}^\infty \frac{q^{2n^2+6n}}{(q^4;q^4)_n(q^4;q^8)_{n+1}}. \label{new-exam8-2-S3}
\end{align}
Substituting \eqref{Slater118} with $q$ replaced by $q^2$ (resp.\ $-q^2$) into \eqref{new-exam8-2-S1} (resp.\ \eqref{new-exam8-2-S2}), we deduce that
\begin{align}
S_1(q)&=\frac{1}{2}\frac{J_2^2J_4J_{28}J_{2,56}J_{26,56}}{J_1^2J_8J_{56}J_{8,56}J_{16,56}}, \label{new-exam8-2-S1-result} \\
S_2(q)&=\frac{1}{2}\frac{J_1^2J_{14}J_{6,28}J_{10,28}}{J_2J_4J_{28}J_{8,28}J_{12,28}}. \label{new-exam8-2-S2-result}
\end{align}
Substituting \eqref{Slater82} (with $q$ replaced by $q^4$) into \eqref{new-exam8-2-S3}, we obtain
\begin{align}
S_3(q)=-q^3\frac{J_2^4J_8^2J_{56}^3}{J_1^2J_4^4J_{8,56}J_{20,56}J_{24,56}}. \label{new-exam8-2-S3-result}
\end{align}
Now substituting \eqref{new-exam8-2-S1-result}--\eqref{new-exam8-2-S3-result} into \eqref{exam8-2-F}, using the method in \cite{Garvan-Liang}, it is easy to verify that the second equality in \eqref{exam8-2} holds.

(3) From \eqref{exam8-3-new} we have
\begin{align}
F(q^2,q^4;q)=(q^2;q^4)_\infty \oint \frac{(-qz,-q^3z,-q/z,-q^3/z;q^4)_\infty}{(q^4z,q^2z,-q^2z;q^4)_\infty}\frac{\mathrm{d}z}{2\pi iz}. \label{exam8-3-F-start}
\end{align}
Applying Lemma \ref{lem-integral} with
\begin{align*}
(A,B,C,D)=(2,2,3,0),  (a_1,a_2)=(b_1,b_2)=(-q,-q^3), (c_1,c_2,c_3)=(q^4,q^2,-q^2),
\end{align*}
we deduce that
\begin{align}\label{exam8-3-F}
F(q^2,q^4;q)=(q^2;q^4)_\infty (T_1(q)+T_2(q)+T_3(q)),
\end{align}
where
\begin{align}
T_1(q)&=\frac{(-q^5,-q^7,-q^{-3},-q^{-1};q^4)_\infty}{(q^{-2},-q^{-2};q^4)_\infty} \sum_{n=0}^\infty \frac{q^{2n^2+2n}(-q^7,-q^5;q^4)_n}{(q^4,-q^5,-q^7,q^6,-q^6;q^4)_n} \nonumber \\
&=-\frac{(-q;q^2)_\infty^2}{(q^4;q^8)_\infty} \sum_{n=0}^\infty \frac{q^{2n^2+2n}}{(q^4;q^4)_n(q^4;q^8)_{n+1}}, \label{new-exam8-3-T1} \\
T_2(q)&=\frac{(-q^3,-q^5,-q^{-1},-q;q^4)_\infty}{(q^2,-1;q^4)_\infty}\sum_{n=0}^\infty \frac{q^{2n^2}(-q^5,-q^3;q^4)_n}{(q^4,-q^3,-q^5,q^2,-q^4;q^4)_n} \nonumber \\
&=\frac{1}{2}q^{-1}\frac{(-q;q^2)_\infty^2}{(q^2,-q^4;q^4)_\infty}\sum_{n=0}^\infty \frac{q^{2n^2}}{(q^2;q^4)_n(q^8;q^8)_n}, \label{new-exam8-3-T2}\\
T_3(q)&=\frac{(q^3,q^5,q^{-1},q;q^4)_\infty}{(-1,-q^2;q^4)_\infty} \sum_{n=0}^\infty \frac{(-1)^nq^{2n^2}(q^5,q^3;q^4)_n}{(q^4,q^3,q^5,-q^4,-q^2;q^4)_n} \nonumber \\
&=-\frac{1}{2}q^{-1}\frac{(q;q^2)_\infty^2}{(-q^2;q^2)_\infty} \sum_{n=0}^\infty \frac{(-1)^nq^{2n^2}}{(-q^2;q^4)_n(q^8;q^8)_n}. \label{new-exam8-3-T3}
\end{align}
Substituting \eqref{Slater80} (with $q$ replaced by $q^4$) into \eqref{new-exam8-3-T1}, we obtain
\begin{align}
T_1(q)=-\frac{J_2^4J_8^2J_{56}^3}{J_1^2J_4^4J_{4,56}J_{16,56}J_{24,56}}.   \label{new-exam8-3-T1-result}
\end{align}
Substituting \eqref{Slater117} with $q$ replaced by $q^2$ (resp.\ $-q^2$) into \eqref{new-exam8-3-T2} (resp.\ \eqref{new-exam8-3-T3}), we obtain
\begin{align}
T_2(q)&=\frac{1}{2}q^{-1}\frac{J_2^2J_4J_{28}J_{6,56}J_{22,56}}{J_1^2J_8J_{56}J_{8,56}J_{24,56}}, \label{new-exam8-3-T2-result} \\
T_3(q)&=-\frac{1}{2}q^{-1}\frac{J_1^2J_2J_{14}J_{2,28}J_{10,28}}{J_2^2J_4J_{28}J_{4,28}J_{8,28}}.   \label{new-exam8-3-T3-result}
\end{align}
Substituting \eqref{new-exam8-3-T1-result}--\eqref{new-exam8-3-T3-result} into \eqref{exam8-3-F}, using the method in \cite{Garvan-Liang}, it is easy to verify that the second equality in \eqref{exam8-3} holds.
\end{proof}

\subsection{Example 9.}\label{sec-exam9}
The modular triples for this example are given in Table \ref{tab-9}.
\begin{table}[H]
\centering
\begin{tabular}{c|ccc}
  \hline
    \padedvphantom{I}{3ex}{3ex}
  $A$ &  \multicolumn{3}{c}{$\begin{pmatrix} 1 & -1/2 \\ -1/2 & 3/4 \end{pmatrix}$}  \\
  \hline
    \padedvphantom{I}{3ex}{3ex}
  $B$ & $\begin{pmatrix} -1/2 \\ 1/4 \end{pmatrix}$ & $\begin{pmatrix}  0 \\ 0 \end{pmatrix}$ & $ \begin{pmatrix} 0 \\ 1/2 \end{pmatrix}$ \\
  ~~$C$ & $1/28$ & $-3/56$ & $1/56$ \\
  \hline
\end{tabular}
\\[2mm]
\caption{Modular triples for Example 9}\label{tab-9}
\end{table}


\begin{theorem}\label{thm-9}
We have
\begin{align}
&\sum_{i,j\geq 0}\frac{q^{4i^2+3j^2-4ij-4i+2j}}{(q^8;q^8)_i(q^8;q^8)_j} \nonumber \\ &=2\frac{J_{16}J_{48,112}}{J_8^2}+q\frac{J_8J_{56}^2J_{112}J_{8,112}J_{24,112}}{J_4J_{4,112}J_{16,112}J_{28,112}J_{48,112}J_{52,112}}, \label{exam9-1} \\
&\sum_{i,j\geq 0}\frac{q^{4i^2+3j^2-4ij}}{(q^8;q^8)_i(q^8;q^8)_j} \nonumber \\
&=\frac{J_8J_{56}^2J_{112}J_{24,112}J_{40,112}}{J_4J_{12,112}J_{28,112}J_{32,112}J_{44,112}J_{48,112}}+2q^3\frac{J_{16}J_{32,112}}{J_8^2}, \label{exam9-2} \\
&\sum_{i,j\geq 0} \frac{q^{4i^2+3j^2-4ij+4j}}{(q^8;q^8)_i(q^8;q^8)_j}  \nonumber \\
&=\frac{J_8J_{56}^2J_{112}J_{8,112}J_{40,112}}{J_4J_{16,112}J_{20,112}J_{28,112}J_{32,112}J_{36,112}}
+2q^7\frac{J_{16}J_{16,112}}{J_8^2}. \label{exam9-3}
\end{align}
As a consequence, the Nahm sums $f_{A,B,C}(q^{56})$ for $(A,B,C)$ in Example 9 are modular functions on $\Gamma_1(3136)$.
\end{theorem}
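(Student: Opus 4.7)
My plan follows the template of the proof of Theorem \ref{thm-7}: for each of \eqref{exam9-1}--\eqref{exam9-3} I will $2$-dissect the double sum on the parity of $j$, sum the inner $i$-series by Euler's identity, and match the resulting single sum against one of the Slater identities listed in Section \ref{sec-pre} or \eqref{Slater59}--\eqref{Slater61}. The dissection is natural because each exponent $4i^2 + 3j^2 - 4ij + (\text{even linear terms})$ has the same parity as $j$: the summand $3j^2 \equiv j \pmod{2}$ is the only one not automatically even.

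Writing each sum as $G_0(q^2) + q^c G_1(q^2)$ with $G_0$ (resp.\ $G_1$) arising from $j = 2m$ (resp.\ $j = 2m+1$), I fix $m$ and sum over $i$. The quadratic in $i$ has the form $4i^2 - Li = 8\binom{i}{2} + (4-L)i$, so Euler's identity at base $q^8$ yields $\sum_{i\geq 0}q^{4i^2-Li}/(q^8;q^8)_i = (-q^{4-L};q^8)_\infty$. The three relevant exponents $4-L \in \{-8m+4,\,-8m,\,-8m-4\}$ are handled by the shift identities
\begin{align*}
(-q^{-8m+4};q^8)_\infty &= q^{-4m^2}(-q^4;q^8)_m(-q^4;q^8)_\infty,\\
(-q^{-8m};q^8)_\infty &= 2q^{-4m^2-4m}(-q^8;q^8)_m(-q^8;q^8)_\infty,\\
(-q^{-8m-4};q^8)_\infty &= q^{-4(m+1)^2}(-q^4;q^8)_{m+1}(-q^4;q^8)_\infty,
\end{align*}
obtained by splitting the infinite product at the appropriate threshold index and telescoping the negative-exponent factors via $1+q^{-s}=q^{-s}(1+q^s)$. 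The factor $2$ in the middle identity---which applies precisely when $L = 8m$---is what produces the prefactor $2$ in front of each of $2\frac{J_{16}J_{48,112}}{J_8^2}$, $2q^3\frac{J_{16}J_{32,112}}{J_8^2}$, and $2q^7\frac{J_{16}J_{16,112}}{J_8^2}$ on the right-hand sides.

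After the reduction, each dissected piece takes the form of an infinite product $(-q^a;q^8)_\infty$ times a single $m$-series $\sum_m q^{8m^2+\beta m}(-q^a;q^8)_{m+\delta}/(q^8;q^8)_{2m+\epsilon}$ with $a\in\{4,8\}$ and $\delta,\epsilon\in\{0,1\}$. Using elementary simplifications such as $(-q^4;q^8)_m/(q^8;q^8)_{2m} = 1/((q^4;q^8)_m(q^{16};q^{16})_m)$ and $(-q^8;q^8)_m/(q^8;q^8)_{2m} = 1/((q^8;q^8)_m(q^8;q^{16})_m)$, each of the six resulting single sums matches, after the substitution $q\to q^4$ or $q\to q^8$ (and possibly first $q \to -q$ to exchange $(q;q^2)$- and $(-q;q^2)$-factors), one of \eqref{Slater31}, \eqref{Slater33}, \eqref{Slater61}, \eqref{Slater80}, \eqref{Slater81}, \eqref{Slater82}, \eqref{Slater117}, \eqref{Slater118}, or \eqref{Slater119}. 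For example, the $j=2m$ piece of \eqref{exam9-1} reduces to $\sum_m q^{8m^2}(-q^8;q^8)_m/(q^8;q^8)_{2m} = \sum_m q^{8m^2}/((q^8;q^8)_m(q^8;q^{16})_m)$, which is \eqref{Slater61} at $q\to q^8$, yielding $J_{48,112}/J_8$; and the $j=2m$ piece of \eqref{exam9-2} reduces to $\sum_m q^{8m^2}/((q^4;q^8)_m(q^{16};q^{16})_m)$, which is \eqref{Slater33} under $q\to -q$ then $q\to q^4$.

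The main obstacle will be the final product-side reconciliation: the Slater identities output theta-type products at modulus $7, 14, 28$ (sometimes at base $-q$, which introduces further mod doubling), whereas the right-hand sides of \eqref{exam9-1}--\eqref{exam9-3} are written as generalized eta-products of level $112$. Bridging the two requires routine but lengthy manipulations of the symbols $J_m$ and $J_{a,m}$, most efficiently carried out via the Maple package \texttt{thetaids} of Frye--Garvan \cite{Garvan-Liang}, exactly as in the closing step of the proof of Theorem \ref{thm-7}. The final modularity assertion---that $f_{A,B,C}(q^{56})$ is modular on $\Gamma_1(3136)$---then follows by converting each generalized eta-product on the right-hand side into a geta-list and applying \texttt{Gamma1ModFunc} to verify conditions (1) and (2) of Lemma \ref{lem-Robins}, following the illustration at the end of the proof of Theorem \ref{thm-7}.
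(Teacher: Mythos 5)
Your proposal is correct and takes essentially the same route as the paper's proof: the paper phrases the split as a $4$-dissection in which two components vanish, but operationally this is exactly your parity split on $j$, followed by Euler summation over $i$, the shift identity for $(-q^{-s};q^8)_\infty$ (which is the source of the factors of $2$, just as you say), and reduction to Slater's list; your two worked examples match the paper's computations. One small imprecision: the six pieces actually reduce to \eqref{Slater61}, \eqref{Slater31}, \eqref{Slater33}, \eqref{Slater60}, \eqref{Slater32} and \eqref{Slater59}, so the explicit candidate list in your penultimate paragraph omits three of the identities that are really used (while listing several that are not), although your opening reference to \eqref{Slater59}--\eqref{Slater61} and to the identities of Section \ref{sec-pre} does cover them.
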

\begin{proof}
(1) We need to find the 4-dissection:
\begin{align}\label{exam9-1-dissection}
\sum_{i,j\geq 0}\frac{q^{4i^2+3j^2-4ij-4i+2j}}{(q^8;q^8)_i(q^8;q^8)_j}=F_0(q^4)+qF_1(q^4)+q^2F_2(q^4)+q^3F_3(q^4),
\end{align}
where $F_0(q), F_1(q), F_2(q), F_3(q) \in \mathbb{Z}[\![q]\!]$.
Note that
$$4i^2+3j^2-4ij-4i+2j\equiv 1-(j-1)^2 \equiv \left\{\begin{array}{ll}
0 \pmod{4}, &j\equiv 0 \pmod{2},\\
1 \pmod{4}, & j\equiv 1\pmod{2}.
\end{array}\right.$$
Hence
\begin{align}\label{exam9-1-F3F4}
F_2(q)=F_3(q)=0.
\end{align}
We have
\begin{align}
F_0(q^4)&=\sum_{i,j\geq 0}\frac{q^{4i^2+3(2j)^2-8ij-4i+4j}}{(q^8;q^8)_i(q^8;q^8)_{2j}} =\sum_{i,j\geq 0}\frac{q^{4i^2-4i+12j^2-8ij+4j}}{(q^8;q^8)_i(q^8;q^8)_{2j}}.
\end{align}
Thus
\begin{align}
F_0(q)=\sum_{i,j\geq 0}\frac{q^{i^2-i+3j^2+j-2ij}}{(q^2;q^2)_i(q^2;q^2)_{2j}}.
\end{align}
We have
\begin{align}
F_0(q^{\frac{1}{2}})&=\sum_{i,j\geq 0}\frac{q^{(i^2-i)/2+(3j^2+j)/2-ij}}{(q;q)_i(q;q)_{2j}} =\sum_{j=0}^\infty \frac{q^{(3j^2+j)/2}}{(q;q)_{2j}}\sum_{i=0}^\infty \frac{q^{(i^2-i)/2}\cdot (q^{-j})^i}{(q;q)_i} \nonumber \\
&=\sum_{j=0}^\infty \frac{q^{(3j^2+j)/2}}{(q;q)_{2j}} (-q^{-j};q)_\infty =2(-q;q)_\infty \sum_{j=0}^\infty \frac{q^{j^2}(-q;q)_j}{(q;q)_{2j}}. \label{exam9-1-F0}
\end{align}
Substituting \eqref{Slater61} into \eqref{exam9-1-F0}, we deduce that
\begin{align}\label{exam9-1-F0-result}
F_0(q^{\frac{1}{2}})=2\frac{J_2J_{6,14}}{J_1^2}.
\end{align}

Next, we have
\begin{align}
qF_1(q^4)&=\sum_{i,j\geq 0}\frac{q^{4i^2-8ij+12j^2-8i+16j+5}}{(q^8;q^8)_i(q^8;q^8)_{2j+1}}.
\end{align}
Thus
\begin{align}
F_1(q)&=\sum_{i,j\geq 0}\frac{q^{i^2-2ij+3j^2-2i+4j+1}}{(q^2;q^2)_i(q^2;q^2)_{2j+1}}=\sum_{j=0}^\infty \frac{q^{3j^2+4j+1}}{(q^2;q^2)_{2j+1}}\sum_{i=0}^\infty \frac{q^{i^2-i}\cdot q^{-(2j+1)i}}{(q^2;q^2)_i} \nonumber \\
&=\sum_{j=0}^\infty \frac{q^{3j^2+4j+1}}{(q^2;q^2)_{2j+1}}(-q^{-2j-1};q^2)_\infty =(-q;q^2)_\infty \sum_{j=0}^\infty \frac{q^{2j^2+2j}(-q;q^2)_{j+1}}{(q^2;q^2)_{2j+1}}. \label{exam9-1-F1}
\end{align}
Substituting \eqref{Slater31} with $q$ replaced by $-q$ into \eqref{exam9-1-F1}, we deduce that
\begin{align}
F_1(q)=\frac{J_2J_{14}^2J_{28}J_{2,28}J_{6,28}}{J_1J_{1,28}J_{4,28}J_{7,28}J_{12,28}J_{13,28}}. \label{exam9-1-F1-result}
\end{align}
Substituting \eqref{exam9-1-F3F4}, \eqref{exam9-1-F0-result} and \eqref{exam9-1-F1-result} into \eqref{exam9-1-dissection}, we obtain \eqref{exam9-1}.

(2) We need to find the 4-dissection:
\begin{align}\label{exam9-2-dissection}
\sum_{i,j\geq 0}\frac{q^{4i^2+3j^2-4ij}}{(q^8;q^8)_i(q^8;q^8)_j}=G_0(q^4)+qG_1(q^4)+q^2G_2(q^4)+q^3G_3(q^4),
\end{align}
where $G_0(q), G_1(q), G_2(q), G_3(q) \in \mathbb{Z}[\![q]\!]$. Note that
\begin{align}
4i^2+3j^2-4ij\equiv -j^2  \equiv \left\{\begin{array}{ll}
0 \pmod{4}, &j\equiv 0 \pmod{2},\\
3 \pmod{4}, & j\equiv 1\pmod{2}.
\end{array}\right.
\end{align}
It follows that
\begin{align}\label{exam9-2-G1G2}
G_1(q)=G_2(q)=0.
\end{align}
We have
\begin{align}
G_0(q^4)=\sum_{i,j\geq 0}\frac{q^{4i^2+12j^2-8ij}}{(q^8;q^8)_i(q^8;q^8)_{2j}}.
\end{align}
Thus
\begin{align}
G_0(q)&=\sum_{i,j\geq 0}\frac{q^{i^2+3j^2-2ij}}{(q^2;q^2)_i(q^2;q^2)_{2j}} =\sum_{j=0}^\infty \frac{q^{3j^2}}{(q^2;q^2)_{2j}}\sum_{i=0}^\infty \frac{q^{i^2-i}\cdot q^{(1-2j)i}}{(q^2;q^2)_i} \nonumber \\
&=\sum_{j=0}^\infty \frac{q^{3j^2}}{(q^2;q^2)_{2j}}(-q^{1-2j};q^2)_\infty =(-q;q^2)_\infty \sum_{j=0}^\infty \frac{q^{2j^2}(-q;q^2)_j}{(q^2;q^2)_{2j}}. \label{exam9-2-G0}
\end{align}
Substituting \eqref{Slater33} (with $q$ replaced by $-q$) into \eqref{exam9-2-G0}, we deduce that
\begin{align}\label{exam9-2-G0-result}
G_0(q)=\frac{J_2J_{14}^2J_{28}J_{6,28}J_{10,28}}{J_1J_{3,28}J_{7,28}J_{8,28}J_{11,28}J_{12,28}}.
\end{align}

Next, we have
\begin{align}
q^3G_3(q^4)&=\sum_{i,j\geq 0}\frac{q^{4i^2+3(2j+1)^2-4i(2j+1)}}{(q^8;q^8)_i(q^8;q^8)_{2j+1}} =\sum_{i,j\geq 0} \frac{q^{4i^2-8ij+12j^2-4i+12j+3}}{(q^8;q^8)_i(q^8;q^8)_{2j+1}}.
\end{align}
We have
\begin{align}
G_3(q^{\frac{1}{2}})&=\sum_{i,j\geq 0}\frac{q^{(i^2-i)/2-ij+(3j^2+3j)/2}}{(q;q)_i(q;q)_{2j+1}} =\sum_{j=0}^\infty \frac{q^{(3j^2+3j)/2}}{(q;q)_{2j+1}}\sum_{i=0}^\infty  \frac{q^{(i^2-i)/2}\cdot q^{-ji}}{(q;q)_i} \nonumber \\
&=\sum_{j=0}^\infty \frac{q^{(3j^2+3j)/2}}{(q;q)_{2j+1}} (-q^{-j};q)_\infty =2(-q;q)_\infty \sum_{j=0}^\infty \frac{q^{j^2+j}(-q;q)_j}{(q;q)_{2j+1}}. \label{exam9-2-G3}
\end{align}
Substituting \eqref{Slater60} into \eqref{exam9-2-G3}, we deduce that
\begin{align}\label{exam9-2-G3-result}
G_3(q^{\frac{1}{2}})=2\frac{J_2J_{4,14}}{J_1^2}.
\end{align}
Substituting \eqref{exam9-2-G1G2}, \eqref{exam9-2-G0-result} and \eqref{exam9-2-G3-result} into \eqref{exam9-2-dissection}, we obtain \eqref{exam9-2}.

(3) We need to find the 4-dissection:
\begin{align}\label{exam9-3-dissection}
\sum_{i,j\geq 0}\frac{q^{4i^2+3j^2-4ij+4j}}{(q^8;q^8)_i(q^8;q^8)_j}=H_0(q^4)+qH_1(q^4)+q^2H_2(q^4)+q^3H_3(q^4),
\end{align}
where $H_0(q), H_1(q), H_2(q), H_3(q) \in \mathbb{Z}[\![q]\!]$. Note that
\begin{align}
4i^2+3j^2-4ij+4j\equiv -j^2  \equiv \left\{\begin{array}{ll}
0 \pmod{4}, &j\equiv 0 \pmod{2},\\
3 \pmod{4}, & j\equiv 1\pmod{2}.
\end{array}\right.
\end{align}
It follows that
\begin{align}\label{exam9-3-H1H2}
H_1(q)=H_2(q)=0.
\end{align}
We have
\begin{align}
H_0(q^4)=\sum_{i,j\geq 0}\frac{q^{4i^2+12j^2-8ij+8j}}{(q^8;q^8)_i(q^8;q^8)_{2j}}.
\end{align}
Thus
\begin{align}
H_0(q)&=\sum_{i,j\geq 0}\frac{q^{i^2+3j^2-2ij+2j}}{(q^2;q^2)_i(q^2;q^2)_{2j}} =\sum_{j=0}^\infty \frac{q^{3j^2+2j}}{(q^2;q^2)_{2j}} \sum_{i=0}^\infty \frac{q^{i^2-i}\cdot q^{(1-2j)i}}{(q^2;q^2)_i} \nonumber \\
&=\sum_{j=0}^\infty \frac{q^{3j^2+2j}}{(q^2;q^2)_{2j}}(-q^{1-2j};q^2)_\infty =(-q;q^2)_\infty \sum_{j=0}^\infty \frac{q^{2j^2+2j}(-q;q^2)_j}{(q^2;q^2)_{2j}}. \label{exam9-3-H0}
\end{align}
Substituting \eqref{Slater32} (with $q$ replaced by $-q$) into \eqref{exam9-3-H0}, we deduce that
\begin{align}\label{exam9-3-H0-result}
H_0(q)=\frac{J_2J_{14}^2J_{28}J_{2,28}J_{10,28}}{J_1J_{4,28}J_{5,28}J_{7,28}J_{8,28}J_{9,28}}.
\end{align}

Next, since
\begin{align}
q^3H_3(q^4)=\sum_{i,j\geq 0}\frac{q^{4i^2-8ij+12j^2-4i+20j+7}}{(q^8;q^8)_i(q^8;q^8)_{2j+1}},
\end{align}
we have
\begin{align}
H_3(q^{\frac{1}{2}})&=q^{\frac{1}{2}}\sum_{i,j\geq 0}\frac{q^{(i^2-i)/2-ij+(3j^2+5j)/2}}{(q;q)_i(q;q)_{2j+1}} =q^{\frac{1}{2}}\sum_{j=0}^\infty \frac{q^{(3j^2+5j)/2}}{(q;q)_{2j+1}}\sum_{i=0}^\infty \frac{q^{(i^2-i)/2}\cdot q^{-ji}}{(q;q)_i} \nonumber \\
&=q^{\frac{1}{2}}\sum_{j=0}^\infty \frac{q^{(3j^2+5j)/2}}{(q;q)_{2j+1}}(-q^{-j};q)_\infty =2q^{\frac{1}{2}}(-q;q)_\infty \sum_{j=0}^\infty \frac{q^{j^2+2j}(-q;q)_j}{(q;q)_{2j+1}}. \label{exam9-3-H3}
\end{align}
Substituting \eqref{Slater59} into \eqref{exam9-3-H3}, we deduce that
\begin{align}\label{exam9-3-H3-result}
H_3(q^{\frac{1}{2}})=2q^{\frac{1}{2}}\frac{J_2J_{2,14}}{J_1^2}.
\end{align}
Substituting \eqref{exam9-3-H1H2}, \eqref{exam9-3-H0-result} and \eqref{exam9-3-H3-result} into \eqref{exam9-3-dissection}, we obtain \eqref{exam9-3}.
\end{proof}

\subsection{Example 10.}\label{sec-exam10}
The modular triples for this example are given in Table \ref{tab-10}.
\begin{table}[H]
\centering
\begin{tabular}{c|ccc}
  \hline
    \padedvphantom{I}{3ex}{3ex}
  $A$ &  \multicolumn{3}{c}{$\begin{pmatrix} 4/3 & 2/3 \\ 2/3 & 4/3 \end{pmatrix}$}  \\
  \hline
\padedvphantom{I}{3ex}{3ex}
$B$ & $\begin{pmatrix} -2/3 \\ -1/3 \end{pmatrix}$ & $\begin{pmatrix}  -1/3 \\ -2/3 \end{pmatrix}$ & $\begin{pmatrix} 0 \\ 0 \end{pmatrix}$ \\
  ~~ $C$ & $1/30$ & $1/30$ & $-1/30$ \\
  \hline
\end{tabular}
\\[2mm]
\caption{Modular triples for Example 10}\label{tab-10}
\end{table}


Since the $(1,1)$-entry and the $(2,2)$-entry of $A$ are the same, the first and the second vectors give the same identity.

This example has been discussed by Vlasenko and Zwegers \cite[p.\ 633, Table 1]{VZ}. They found the following conjectural identities.
\begin{conj}\label{conj-10}
We have
\begin{align}
&\sum_{i,j\geq 0} \frac{q^{2i^2+2ij+2j^2-2i-j}}{(q^3;q^3)_i(q^3;q^3)_j} \nonumber \\
&=\frac{1}{(q^3;q^3)_\infty} \sum_{n=-\infty}^\infty
(-1)^n\left(2q^{\frac{45}{2}n^2+\frac{9}{2}n}+q^{\frac{45}{2}n^2+\frac{39}{2}n+4}-q^{\frac{45n^2}{2}+\frac{69}{2}n+13}   \right)  \nonumber \\
&=\frac{1}{J_3}\left(2J_{18,45}+qJ_{12,45}+q^4J_{3,45}\right), \label{conj-10-1} \\
&\sum_{i,j\geq 0} \frac{q^{2i^2+2ij+2j^2}}{(q^3;q^3)_i(q^3;q^3)_j} \nonumber \\
&=\frac{1}{(q^3;q^3)_\infty} \sum_{n=-\infty}^\infty \left( 2q^{\frac{45}{2}n^2+\frac{27}{2}n+2}+q^{\frac{45}{2}n^2+\frac{3}{2}n}-q^{\frac{45}{2}n^2+\frac{33}{2}n+3}\right) \nonumber \\
&=\frac{1}{J_3}\left(J_{21,45}-q^3J_{6,45}+2q^2J_{9,45}  \right). \label{conj-10-2}
\end{align}
\end{conj}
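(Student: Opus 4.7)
The plan is to adapt the dissection-and-collapse strategy that worked for Examples 7 and 9. Since the double sums in Conjecture \ref{conj-10} are expressed in base $q^3$ and the right-hand sides are sums of three theta products weighted by $1, q, q^2$ (respectively $1, q^2, q^3$), it is natural to perform a $3$-dissection
\begin{align*}
\sum_{i,j\geq 0}\frac{q^{2i^2+2ij+2j^2-2i-j}}{(q^3;q^3)_i(q^3;q^3)_j}=G_0(q^3)+qG_1(q^3)+q^2G_2(q^3),
\end{align*}
and similarly for \eqref{conj-10-2}. A residue analysis of the quadratic exponent modulo $3$, sorted by the residues of $i$ and $j$, should identify which components vanish and which survive. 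The goal is to match each surviving $G_k$ with one of the theta contributions $J_{18,45}, J_{12,45}, J_{3,45}$ on the right.

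For each surviving piece, I would next try to eliminate one summation variable. After restricting $i$ to a residue class $3\ell+r$, the inner sum over $j$ should be evaluable by Euler's identities \eqref{Euler} or the Jacobi triple product \eqref{Jacobi}, reducing the piece to a single sum in $\ell$. This single sum should then match a Slater-type identity: since the product side has conductor $45$ in base $q$ (equivalently conductor $15$ after the $q\mapsto q^3$ rescaling), the required identities will live in the modulus-$15$ region of Slater's list — a relatively sparse region containing notably \eqref{Slater44} and \eqref{Slater46}. If the direct dissection does not collapse cleanly, the fallback is the integral method: introduce parameters $u,v$ in $F(u,v;q):=\sum_{i,j\geq 0}\frac{q^{2i^2+2ij+2j^2}u^iv^j}{(q^3;q^3)_i(q^3;q^3)_j}$, use $2i^2+2ij+2j^2=(i+j)^2+i^2+j^2$ together with \eqref{Jacobi} to represent the cross term $q^{2ij}$ as a contour integral in an auxiliary variable $z$, and then apply Lemma \ref{lem-integral} at suitable specializations, mirroring the treatment of Example 8.

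The main obstacle will be the scarcity of modulus-$15$ single-sum identities that fit the structure produced by the dissection. It is entirely plausible that the $G_k$'s do not reduce to entries already present in Slater's list, in which case auxiliary modulus-$15$ Rogers--Ramanujan type identities must first be established independently — perhaps via Bailey pair machinery or via further integral transforms applied to the $F(u,v;q)$ above. This is likely the reason this case remained merely conjectural in the paper (with modularity only confirmed via nilpotent DAHA methods by Cherednik--Feigin \cite{Feigin}). Once candidate product-theta representations for each $G_k$ are in hand, the final step — verifying equivalence with the right-hand sides of \eqref{conj-10-1}--\eqref{conj-10-2} — is a routine check among generalized Dedekind eta-products, executable algorithmically via the \texttt{thetaids} package following Section \ref{sec-modularity}.
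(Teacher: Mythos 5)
First, note that the paper does not prove this statement: it is explicitly left as Conjecture \ref{conj-10} (attributed to Vlasenko and Zwegers \cite{VZ}), with only the modularity of the sum sides known via Cherednik--Feigin \cite{Feigin}, and the identities themselves proved only later in \cite{CRW2024}. So there is no proof in the paper to compare against, and your submission is likewise not a proof but a plan; as it stands it establishes nothing.

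The plan itself is a reasonable mirror of the paper's treatment of Examples 7 and 9, and its first step is sound: the exponent $2i^2+2ij+2j^2-2i-j$ never takes the residue $2$ modulo $3$ (and $2i^2+2ij+2j^2$ never takes the residue $1$), so one component of each $3$-dissection vanishes, exactly as the shape of the right-hand sides of \eqref{conj-10-1}--\eqref{conj-10-2} predicts. But the decisive step --- ``each surviving $G_k$ should match a Slater-type identity'' --- is precisely where the method breaks down, and you only gesture at this rather than resolve it. Concretely: after fixing the residue class of one variable and summing the other by \eqref{Euler}, the surviving components reduce to single sums whose conjectural product sides have modulus $15$ (e.g.\ $G_0$ should equal a multiple of $J_{18,45}/J_3$ rescaled, and $G_1$ is not even a single product but a sum of two theta quotients, as one sees by sorting $2J_{18,45}+qJ_{12,45}+q^4J_{3,45}$ by residue); none of the required single-sum identities appear in Slater's list or elsewhere in the paper's toolkit, and the paper's own analogous attempt for the other open case (Example 5, equations \eqref{exam5-F0}--\eqref{exam5-G2}) stalls at exactly this point. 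Your fallback suggestions (contour integrals via $2i^2+2ij+2j^2=(i+j)^2+i^2+j^2$ and Lemma \ref{lem-integral}, or Bailey pairs) are in the spirit of what eventually worked in \cite{CRW2024}, but invoking them without carrying out the reduction and without supplying the new modulus-$15$ identities leaves the conjecture exactly as open as the paper leaves it.
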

For each of the identity, the first equality is the one given by \cite{VZ}, while the second equality follows from the first one and \eqref{Jacobi}. As a consequence, the Nahm sums $f_{A,B,C}(q^{30})$ for $(A,B,C)$ in Example 10 are modular functions on $\Gamma_1(900)$.

Cherednik and Feigin \cite{Feigin} confirmed the modularity of the Nahm sums in this example via the nilpotent double affine Hecke algebras. They \cite[p.\ 1074]{Feigin} proved that the sum sides of these identities are modular functions, and then said that to prove \eqref{conj-10-1} and \eqref{conj-10-2}, ``one needs to compare only few terms in the $q$-expansions to establish their coincidence''. However, it seems difficult to prove Conjecture \ref{conj-10} in this way since it is hard to determine the number of terms need to be compared.

Though we cannot find purely $q$-series proofs for this theorem (which would be very exciting), we find equivalent formulas for it.
\begin{conj}\label{conj-10-Wang}
We have
\begin{align}
\sum_{i,j\geq 0} \frac{q^{2i^2+2ij+2j^2-2i-j}}{(q^3;q^3)_i(q^3;q^3)_j}&=3\frac{J_{18,45}}{J_3}-\frac{J_{5}J_{1,15}J_{4,15}}{J_{15}^2J_{3,15}},   \label{conj-10-1-Wang} \\
\sum_{i,j\geq 0} \frac{q^{2i^2+2ij+2j^2}}{(q^3;q^3)_i(q^3;q^3)_j} &=\frac{J_5J_{2,15}J_{7,15}}{J_{15}^2J_{6,15}}+3q^2\frac{J_{9,45}}{J_3}. \label{conj-10-2-Wang}
\end{align}
\end{conj}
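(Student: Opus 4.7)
My plan is to derive Conjecture~\ref{conj-10-Wang} from the Vlasenko--Zwegers Conjecture~\ref{conj-10}, which has been supported by the DAHA-based modularity proof of Cherednik and Feigin. Since the two conjectures have identical left-hand sides, equating their right-hand sides reduces the problem to the pair of theta-function identities
\begin{align*}
J_{18,45}-qJ_{12,45}-q^{4}J_{3,45} &= \frac{J_{3}J_{5}J_{1,15}J_{4,15}}{J_{15}^{2}J_{3,15}},\\
J_{21,45}-q^{3}J_{6,45}-q^{2}J_{9,45} &= \frac{J_{3}J_{5}J_{2,15}J_{7,15}}{J_{15}^{2}J_{6,15}}.
\end{align*}
No Nahm sum appears in these two statements, and I would settle them within the modular-forms framework reviewed in Section~\ref{sec-modularity}.

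The technical steps are as follows. First, rewrite each factor $J_{a,m}$ in terms of the generalized Dedekind eta-function $\eta_{m;a}(\tau)$ via \eqref{eq-general-eta}; multiply by an appropriate rational power of $q$ and apply a substitution $\tau\mapsto k\tau$ with $k$ a small common multiple of the relevant denominators so that every summand on each side becomes a modular function on one and the same group $\Gamma_1(N)$. Modularity on $\Gamma_1(N)$ would be verified through Lemma~\ref{lem-Robins}, using the Maple package \texttt{thetaids} of Frye and Garvan. The difference of the two sides is then a modular function on $\Gamma_1(N)$ whose total pole order $M$ across the cusps is computed explicitly by Ligozat-type formulas, and the valence formula reduces the identity to checking that the first $M+1$ Fourier coefficients of the difference vanish, which is a finite Maple computation. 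I expect $N$ to be a small multiple of $45$ since every factor is already of level dividing $45$.

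The principal obstacle is the status of Conjecture~\ref{conj-10} itself: as the paper already notes, the Cherednik--Feigin argument does not give a computable bound on the number of Fourier coefficients to be matched in their approach, so Conjecture~\ref{conj-10} is not quite rigorous in the literature. To circumvent this, one could instead attack Conjecture~\ref{conj-10-Wang} directly by the same valence-formula strategy: both the Nahm sum on the left and the eta-product combination on the right, after $q\mapsto q^{30}$, are modular on $\Gamma_1(900)$, so equality reduces to matching finitely many Fourier coefficients. The hard part is supplying an explicit integral or double-sum representation for each Nahm sum from which its cusp behaviour on $X_1(900)$ can be read off. An approach modelled on the integral method of Lemma~\ref{lem-integral} used in the proof of Theorem~\ref{thm-8}, combined with a $3$-dissection of the Nahm sum according to the residues of $i,j$ modulo $3$, is the most promising route, and success in this direction would simultaneously yield a purely $q$-series proof of both Conjecture~\ref{conj-10} and Conjecture~\ref{conj-10-Wang}.
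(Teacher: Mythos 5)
You have correctly identified the actual content of this statement: the paper presents it as a conjecture and does not prove it. What the paper does assert is that Conjecture \ref{conj-10-Wang} is \emph{equivalent} to the Vlasenko--Zwegers Conjecture \ref{conj-10}, the equivalence being checkable by the Frye--Garvan method. Your reduction is exactly this equivalence, and your two theta-function identities
\begin{align*}
J_{18,45}-qJ_{12,45}-q^{4}J_{3,45} &= \frac{J_{3}J_{5}J_{1,15}J_{4,15}}{J_{15}^{2}J_{3,15}},\\
J_{21,45}-q^{3}J_{6,45}-q^{2}J_{9,45} &= \frac{J_{3}J_{5}J_{2,15}J_{7,15}}{J_{15}^{2}J_{6,15}},
\end{align*}
are the correct ones: the bookkeeping $2J_{18,45}\to 3J_{18,45}$ and $2q^{2}J_{9,45}\to 3q^{2}J_{9,45}$ checks out. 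Establishing these two identities by Robins' criterion together with a valence-formula computation is routine, and is precisely what the paper means by the remark that the equivalence ``can be proved easily using the method in \cite{Garvan-Liang}.''

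You are also right that this does not amount to a proof of the conjecture, because Conjecture \ref{conj-10} was itself open at the time this paper was written: the Cherednik--Feigin modularity result carries no effective bound on how many Fourier coefficients must be compared. Your fallback of attacking Conjecture \ref{conj-10-Wang} directly by the valence formula stalls at exactly the point you flag: one needs the expansion of the Nahm sum at every cusp of $X_1(900)$, not merely at $i\infty$, and no dissection or integral representation in the paper supplies this. So neither route in your proposal closes the gap --- but neither does the paper, which deliberately leaves the statement as a conjecture (it was subsequently proved by $q$-series methods in \cite{CRW2024}, as recorded in Remark \ref{rem-CRW}). Your proposal is therefore a faithful and accurately self-assessed account of the state of the problem rather than a proof, and in that respect it matches the paper exactly.
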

The equivalence of Conjectures \ref{conj-10} and \ref{conj-10-Wang} can be proved easily using the method in \cite{Garvan-Liang}.

\subsection{Example 11.}\label{sec-exam11}
The modular triples for this example are given in Table \ref{tab-11}.
\begin{table}[H]
\centering
\begin{tabular}{c|ccc}
  \hline
  \padedvphantom{I}{3ex}{3ex}
  $A$ &  \multicolumn{3}{c}{$\begin{pmatrix} 1 &-1/2 \\ -1/2 & 1 \end{pmatrix}$}   \\
  \hline
  \padedvphantom{I}{3ex}{3ex}
  $B$ & $\begin{pmatrix} -1/2 \\ 0 \end{pmatrix}$ & $\begin{pmatrix} 0 \\ -1/2 \end{pmatrix}$ & $\begin{pmatrix} 0 \\ 0 \end{pmatrix}$ \\
  ~~$C$ & $1/20$ & $1/20$ & $-1/20$ \\
  \hline
\end{tabular}
\\[2mm]
\caption{Modular triples for Example 11}\label{tab-11}
\end{table}


This example has been discussed by Vlasenko and Zwegers \cite[p.\ 629, Theorem 3.2]{VZ}. They gave modular function representations for it.
We shall state their result in the following equivalent form and describe the modularity.
\begin{theorem}\label{thm-11}
We have
\begin{align}
\sum_{i,j\geq 0}\frac{q^{i^2+j^2-ij-i}}{(q^2;q^2)_i(q^2;q^2)_j} &=2\frac{J_4J_{8,20}}{J_2^2}+\frac{J_2^2J_{10}J_{20}^3}{J_1J_4J_{1,20}J_{5,20}J_{8,20}J_{9,20}},  \label{exam11-1} \\
\sum_{i,j\geq 0}\frac{q^{i^2+j^2-ij}}{(q^2;q^2)_i(q^2;q^2)_j}&=\frac{J_2^2J_{10}J_{20}^3}{J_1J_4J_{3,20}J_{4,20}J_{5,20}J_{7,20}}+2q\frac{J_4J_{4,20}}{J_2^2}. \label{exam11-2}
\end{align}
As a consequence, the Nahm sums $f_{A,B,C}(q^{20})$ for $(A,B,C)$ in Example 11 are modular functions on $\Gamma_1(400)$.
\end{theorem}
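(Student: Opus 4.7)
The plan is to follow exactly the template used in Examples~7 and~9: reduce each double sum to a single sum via Euler's identity \eqref{Euler}, 2-dissect on the parity of the surviving summation index, and match each of the four resulting pieces with a known Slater-type evaluation.

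First I would sum over $i$ in both \eqref{exam11-1} and \eqref{exam11-2}. Writing $q^{i^2-ij-i}=q^{i(i-1)}(q^{-j})^i$ and $q^{i^2-ij}=q^{i(i-1)}(q^{1-j})^i$, and applying the form $\sum_{n\geq 0} q^{n(n-1)}z^n/(q^2;q^2)_n=(-z;q^2)_\infty$ of Euler's identity, the two left sides reduce to $\sum_{j\geq 0} q^{j^2}(-q^{-j};q^2)_\infty/(q^2;q^2)_j$ and $\sum_{j\geq 0} q^{j^2}(-q^{1-j};q^2)_\infty/(q^2;q^2)_j$, respectively. I then split each single sum according to $j=2k$ versus $j=2k+1$, using $(-q^{-2k};q^2)_\infty = 2q^{-k(k+1)}(-q^2;q^2)_k(-q^2;q^2)_\infty$ and $(-q^{-2k-1};q^2)_\infty = q^{-(k+1)^2}(-q;q^2)_{k+1}(-q;q^2)_\infty$, together with the elementary cancellations $(-q^2;q^2)_k/(q^2;q^2)_{2k}=1/[(q^2;q^2)_k(q^2;q^4)_k]$ and $(-q;q^2)_{k+1}/(q^2;q^2)_{2k+1}=1/[(q;q^2)_{k+1}(q^4;q^4)_k]$.

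Three of the four matches are then immediate: the even half of \eqref{exam11-1} becomes $2(-q^2;q^2)_\infty\sum_k q^{3k^2-k}/[(q^2;q^2)_k(q^2;q^4)_k]$, which is \eqref{Slater46} with $q\mapsto q^2$ and produces $2J_4J_{8,20}/J_2^2$; the odd half of \eqref{exam11-1} is $(-q;q^2)_\infty$ times \eqref{Slater97} verbatim, giving $J_2^3J_{3,10}J_{4,20}/(J_1^2J_4^2J_{20})$; and the odd half of \eqref{exam11-2} is $2q(-q^2;q^2)_\infty$ times \eqref{Slater44} with $q\mapsto q^2$, producing $2qJ_4J_{4,20}/J_2^2$ directly. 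The only subtle step is the even half of \eqref{exam11-2}, which equals $(-q;q^2)_\infty\sum_k q^{3k^2}/[(q;q^2)_k(q^4;q^4)_k]$: this evaluation I would obtain by substituting $q\mapsto -q$ in \eqref{Slater19}. Since $3n^2\equiv n\pmod 2$, the $(-1)^n$ and the sign from $q^{3n^2}$ combine to $+1$, while $(-q;q^2)_n$ transforms to $(q;q^2)_n$, so the left side becomes $\sum_n q^{3n^2}/[(q;q^2)_n(q^4;q^4)_n]$; on the right, expanding each $(x;-q^5)_\infty=(x;q^{10})_\infty(-xq^5;q^{10})_\infty$ and using $(x;q)_\infty(-x;q)_\infty=(x^2;q^2)_\infty$ collapses the mod-$10$ factors into mod-$20$ ones, yielding $J_{10}J_{20}^3/(J_{3,20}J_{4,20}J_{5,20}J_{7,20})$.

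Once these four evaluations are assembled, the final identifications of the resulting eta-products with those on the right-hand sides of \eqref{exam11-1}--\eqref{exam11-2} (most strikingly, $J_2^3J_{3,10}J_{4,20}/(J_1^2J_4^2J_{20})=J_2^2J_{10}J_{20}^3/(J_1J_4J_{1,20}J_{5,20}J_{8,20}J_{9,20})$) reduce to comparing the exponent of each $(q^a;q^{20})_\infty$ on the two sides; this is a routine calculation that can be automated with the Frye--Garvan \texttt{thetaids} package described in Section~\ref{sec-modularity}, and the same package then yields the modularity on $\Gamma_1(400)$ claimed in the theorem. The main obstacle is recognizing that $\sum_k q^{3k^2}/[(q;q^2)_k(q^4;q^4)_k]$ requires the $q\mapsto-q$ twist of a mod-$5$ identity rather than any direct mod-$10$ or mod-$20$ Slater companion, since no identity with pure exponent $3k^2$ appears directly in the mod-$10$/mod-$20$ block of Slater's list.
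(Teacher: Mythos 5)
Your proposal is correct and is precisely the route the paper sketches (following Vlasenko--Zwegers): sum over $i$ by Euler's identity, 2-dissect on the parity of $j$, and invoke \eqref{Slater46} and \eqref{Slater97} for \eqref{exam11-1} and \eqref{Slater19} (under $q\mapsto -q$) and \eqref{Slater44} for \eqref{exam11-2}. Your version merely supplies the details the paper omits, and all the individual reductions and product identifications check out.
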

We will not repeat the proof in \cite{VZ}. But for the convenience of the reader, we sketch briefly the steps. To prove \eqref{exam11-1}, one may sum over $i$ using \eqref{Euler}, and then split the sum into two sums corresponding to even and odd values of $j$, respectively. Then for the sum with $j$ even, we can use \eqref{Slater46}. For the sum with $j$ odd, we can use \eqref{Slater97}.

Similarly, the identity \eqref{exam11-2} can be reduced to the identities \eqref{Slater19} and \eqref{Slater44}.

So far we have discussed all the eleven rank two examples discovered by Zagier. Before closing this paper, we make two remarks. First, according to the notion used in \cite{Wang}, all of the above identities for Zagier's examples are Rogers--Ramanujan type identities of index $(1,1)$, except that Theorem \ref{thm-2} also contains double sums of index $(1,2)$. Here we slightly modified the notion in \cite{Wang} by allowing the product side to be finite sums of infinite products instead of just one single product. Second,  Zagier \cite{Zagier} also stated many possible modular triples when the rank $r=3$. We have verified all of them by proving some identities of index $(1,1,1)$. This will be discussed in detail in a separate paper.

\begin{rem}\label{rem-CRW}
After the first version of this paper (arXiv:2210.10748v1) has been written, Conjectures \ref{conj-exam5} and \ref{conj-10} have been confirmed by Cao, Rosengren and the author \cite{CRW2024} via $q$-series approach. Therefore, the modularity of all of Zagier's rank two examples in \cite[Table 2]{Zagier} have now been confirmed.
\end{rem}


\subsection*{Acknowledgements}
This work was supported by the National Natural Science Foundation of China (12171375). We thank Prof.\ Haowu Wang for some helpful comments in Remarks \ref{rem-exam2} and \ref{rem-exam4}. We are also grateful to the referees for carefully reading the manuscript and providing valuable comments and suggestions.


\end{document}